\newcommand{\mathds}[1]{{\mathbb #1}}
\theoremstyle{definition}
\newtheorem{Definition}{Definition}[section]
\newtheorem{Example}[Definition]{Example}
\newtheorem{Remark}[Definition]{Remark}
\newtheorem{Caution}[Definition]{Caution}
\theoremstyle{plain}
\newtheorem{Theorem}[Definition]{Theorem}
\newtheorem{Proposition}[Definition]{Proposition}
\newtheorem{Lemma}[Definition]{Lemma}
\newtheorem{Corollary}[Definition]{Corollary}
\newtheoremstyle{voiditstyle}{3pt}{3pt}{\itshape}{\parindent}%
{\bfseries}{.}{ }{\thmnote{#3}}%
\theoremstyle{voiditstyle}
\newtheoremstyle{voidromstyle}{3pt}{3pt}{\rm}{\parindent}%
{\bfseries}{.}{ }{\thmnote{#3}}%
\theoremstyle{voidromstyle}
\newcommand{\cal}{\mathcal}
\newcommand{\GG}{{\mathds{G}}}
\newcommand{\ZZ}{{\mathds{Z}}}
\newcommand{\OO}{{\cal O}}
\newcommand{\Spec}{{\rm Spec}\:}
\newcommand{\Aut}{{\rm Aut}}
  \newcommand{\red}{{\rm red}}
\title{Automorphism group schemes \\of bielliptic and quasi-bielliptic surfaces}
\author{Gebhard Martin}
\address{Mathematisches Institut der Universit\"at Bonn, Endenicher Allee 60, 53115 Bonn, Germany}
\email{gmartin@math.uni-bonn.de}
\begin{document}



\maketitle

\begin{prelims}

\DisplayAbstractInEnglish

\bigskip

\DisplayKeyWords

\medskip

\DisplayMSCclass







\end{prelims}


\newpage

\setcounter{tocdepth}{1}

\tableofcontents


\section{Introduction}

We are working over an algebraically closed field $k$ of characteristic $p \geq 0$.
Bielliptic and quasi-bielliptic surfaces form one of the four types of minimal smooth projective surfaces of Kodaira dimension $0$. Each bielliptic surface $X$ is a quotient $\pi: E \times C \to (E \times C)/G = X$, where $E$ and $C$ are elliptic curves and $G \subseteq E$ is a finite subgroup scheme of $E$ that acts faithfully on $C$ via
$
\alpha: G \to \Aut_{C}.
$
Moreover, the image of $\alpha$ is not entirely contained in the group of translations $C$. This latter condition guarantees that $X$ is not an Abelian surface. All possible combinations of $E,C,G$ and $\alpha$ have been determined: if $p = 0$ by Bagnera and de Franchis in \cite{Bagnera}, and if $p \neq 0$ by Bombieri and Mumford in \cite{BombieriMumford2}.

Similarly, quasi-bielliptic surfaces, which exist if and only if $p \in \{2,3\}$, are obtained by replacing $C$ by a cuspidal plane cubic curve and by imposing on $\alpha$ the condition that the cusp of $C$ is not a fixed point of the group scheme $\alpha(G)$. As in the bielliptic case, it is possible to determine all combinations of $E,C,G$ and $\alpha$. We refer the reader to \cite{BombieriMumford3}, but note that not all cases listed there actually occur (see Remark \ref{rem: char3fall} and Remark \ref{rem: char2lambda}).

Bielliptic and quasi-bielliptic surfaces come with two natural fibrations: one of them is the Albanese map $f_E: X \to E/G =: E'$, which is quasi-elliptic if $X$ is quasi-bielliptic, and elliptic if $X$ is bielliptic. All closed fibers of $f_E$ are isomorphic to $C$, since this holds after pulling back along the faithfully flat morphism $E \to E/G$. The second fibration $f_C: X \to C/\alpha(G) =: C' \cong \mathbb{P}^1$ is always elliptic, but has multiple fibers. 

The purpose of this article is to determine the automorphism group scheme $\Aut_X$ of $X$. If $p = 0$, this has been carried out by Bennett and Miranda in \cite{BennettMiranda}. By Proposition \ref{prop: normalizer}, the actions of the centralizers $C_{\Aut_E}(G)$ and $C_{\Aut_C}(\alpha(G))$ on the first and second factor of $E \times C$, respectively, descend to $X$ and we consider them as subgroup schemes of $\Aut_X$ via these actions.
 Then, the following theorem is the key result of this article.

\begin{Theorem} \label{Main}
Let $X = (E \times C)/G$ be a bielliptic or quasi-bielliptic surface. Then, there is a short exact sequence of group schemes
$$
1 \to (C_{\Aut_E}(G) \times C_{\Aut_C}(\alpha(G)))/G \overset{\pi_*}{\to} \Aut_X \to M \to 1,
$$
where $G$ is embedded via ${\rm id} \times \alpha$ and $M$ is a finite and \'etale group scheme. In particular, $\Aut_X$ is of finite type.
\end{Theorem}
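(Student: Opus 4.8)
The plan is to realise $\Aut_X$ as an extension by exploiting the two fibrations $f_E$ and $f_C$, and to reduce the assertion that $M$ is finite and \'etale to a statement about the two base curves together with a computation of global vector fields on $X$.

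First I would pin down the left-hand map. By Proposition~\ref{prop: normalizer}, a product automorphism $(\sigma,\tau)$ of $E\times C$ descends to $X$ precisely when $(\sigma,\tau)\in C_{\Aut_E}(G)\times C_{\Aut_C}(\alpha(G))$, and two such automorphisms induce the same automorphism of $X$ if and only if they differ by an element of the subgroup scheme $\{(t_g,\alpha(g)) : g\in G\}$, where $t_g$ denotes translation by $g$. Here $t_g$ centralises $G$ because $G\subseteq E$ is commutative, and $\alpha(g)$ centralises $\alpha(G)$ because $\alpha(G)$ is commutative, so this is exactly the image of $G$ under $\mathrm{id}\times\alpha$. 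This identifies $\pi_*$ with a monomorphism of group schemes with source $\mathcal H := (C_{\Aut_E}(G)\times C_{\Aut_C}(\alpha(G)))/G$, and exhibits $\pi_*(\mathcal H)$ as the subgroup scheme of those automorphisms of $X$ that lift to $E\times C$; any such lift automatically preserves both projections and is therefore a product automorphism.

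Next I would use that every automorphism of $X$ preserves the Albanese fibration $f_E$, and preserves $f_C$ as well, since the latter is the only genus-one fibration of $X$ with rational base. This produces a homomorphism $\Phi=(\rho_E,\rho_C)\colon \Aut_X \to \Aut_{E'}\times \Aut_{C'}$ with $C'\cong\PP^1$. Using the isomorphism $E\times C\cong X\times_{E'}E$ (the pullback of the isogeny $E\to E'$ along $f_E$), an automorphism in $\ker\Phi$ lifts to $E\times C$ and hence lies in $\pi_*(\mathcal H)$; thus $\ker\Phi\subseteq \pi_*(\mathcal H)=\Phi^{-1}(N)$ with $N:=\Phi(\pi_*(\mathcal H))$. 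In particular $\pi_*(\mathcal H)$ is normal, $M\cong \mathrm{Im}(\Phi)/N$, and the sequence is exact. Finiteness of $M$ then follows by bounding the quotient in each factor: in the $E'$-factor one divides $\Aut_{E'}$ by the translations $E'\subseteq N$, leaving a subgroup of the finite group $\Aut(E',0)$; in the $C'$-factor the image $\mathrm{Im}(\rho_C)$ permutes the finitely many multiple fibers of $f_C$, a configuration rigid enough (once the singular fibers are read off from the classification) that $\mathrm{Im}(\rho_C)$ and its subgroup $\rho_C(C_{\Aut_C}(\alpha(G)))$ share the same identity component.

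The remaining, and in my view principal, difficulty is to prove that $M$ is \'etale, i.e. that $\mathrm{Lie}(M)=0$, which amounts to the statement $H^0(X,T_X)=\mathrm{Lie}(\mathcal H)$: every global vector field on $X$ is induced by a $G$-invariant product vector field on $E\times C$. When $G$ is \'etale this is immediate, since then $\pi$ is \'etale, $H^0(X,T_X)=H^0(E\times C,T_{E\times C})^G$, and splitting vector fields along the two projections gives the claim at once. The obstacle is concentrated in positive characteristic with non-reduced $G$ (so that $\pi$ is inseparable and the isogeny $E\to E'$ kills $\mathrm{Lie}(E)$), and in the quasi-bielliptic case, where $C_{\Aut_C}(\alpha(G))$ already contributes positive-dimensional vector fields coming from the $\GG_a$ in $\Aut_C$. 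In these cases one cannot argue formally and must instead compute $H^0(X,T_X)$ directly from the fibration structure---decomposing a vector field into its $f_C$-vertical part and its image in $H^0(\PP^1,T_{\PP^1})$, and controlling its image in $H^0(E',T_{E'})$ under $f_E$---to verify that no vector field appears beyond those already accounted for by $\mathcal H$. Together with finiteness this shows that $M$ is finite and \'etale, and presenting $\Aut_X$ as an extension of the finite $M$ by the finite-type $\pi_*(\mathcal H)$ shows that $\Aut_X$ is of finite type.
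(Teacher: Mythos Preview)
Your proposal contains a conceptual error and a strategic gap.

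\textbf{Centralizer versus normalizer.} It is not true that a product automorphism $(\sigma,\tau)$ of $E\times C$ descends to $X$ precisely when it lies in $C_{\Aut_E}(G)\times C_{\Aut_C}(\alpha(G))$. By Proposition~\ref{prop: normalizer}, descent requires only that $(\sigma,\tau)$ \emph{normalize} $G$, and the normalizer is in general strictly larger than the product of centralizers (see Lemma~\ref{lem: nonconnecteddiagonal}\,(3); the extra elements are precisely what produces the nontrivial $M$ in several cases). Consequently $\pi_*(\mathcal H)$ is \emph{not} the subgroup of automorphisms of $X$ that lift to $E\times C$. This also invalidates the step ``an automorphism in $\ker\Phi$ lifts to $E\times C$ and hence lies in $\pi_*(\mathcal H)$'': lifting only lands you in the image of the normalizer, and one still has to argue that the lift centralizes $G$. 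Relatedly, even granting $\ker\Phi\subseteq\pi_*(\mathcal H)$, the identity $\pi_*(\mathcal H)=\Phi^{-1}(N)$ does not by itself give normality; you would still need $N$ normal in $\Phi(\Aut_X)$, which you do not address. The paper handles normality differently: it identifies $\Aut_{X/E'}$ and $\Aut_{X/C'}$ explicitly, shows that each is contained in $\pi_*(\mathcal H)$ (this uses Proposition~\ref{prop: normalizerrationalpoints} and the commutativity of $G$, not just the lifting), and concludes that $\pi_*(\mathcal H)$ is generated by these two normal subgroup schemes, hence is itself normal.

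\textbf{\'Etaleness of $M$.} Your plan to prove $\mathrm{Lie}(M)=0$ by computing $H^0(X,T_X)$ case by case in the inseparable and quasi-bielliptic situations is far harder than necessary, and you do not give an actual argument there. The paper avoids this entirely with a uniform argument through the $E'$-factor alone: since $\Aut_{X/E'}\cong C_{\Aut_C}(\alpha(G))$ is already contained in $\pi_*(\mathcal H)$, the kernel of $(f_E)_*$ is contained in $\pi_*(\mathcal H)$, so $M$ embeds as a sub\emph{scheme} into $\Aut_{E'}/(f_E)_*C_{\Aut_E}(G)$. As $E\subseteq C_{\Aut_E}(G)$ surjects onto $E'$ under $(f_E)_*$, this quotient is a quotient of $\Aut_{E',O'}$, which is finite and \'etale because $E'$ is a smooth elliptic curve. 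Hence $M$ is finite and \'etale with no computation of vector fields required. The $C'$-factor is used only afterwards, to further constrain $M(k)$.
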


We refer the reader to Theorem \ref{Mainwithdetails} for a refined statement including a description of the group schemes $\Aut_{X/E'}$ and $\Aut_{X/C'}$ of automorphisms of $X$ over $E'$ and over $C'$, respectively. While the part of $\Aut_X$ coming from the centralizers is straightforward to calculate and understand, the part $M$ is more elusive. In particular, we note that $M$ can be non-trivial even in characteristic $0$, contrary to what is claimed in \cite[Section 2]{BennettMiranda}. Even though we do not see an a priori reason for this, $M$ always comes from automorphisms of $E \times C$. Then, a posteriori, Theorem \ref{Mainwithdetails} \eqref{item: Main6} gives a complete description of $M$ (see Remark \ref{rem: M}). By Proposition \ref{prop: normalizer} and Lemma \ref{lem: nonconnecteddiagonal} \eqref{item: nonconnecteddiagonal3}, we have the following corollary of our analysis.

\begin{Corollary}
Let $X = (E \times C)/G$ be a bielliptic or quasi-bielliptic surface. Then, 
$$
\Aut_X \cong N_{\Aut_{E} \times \Aut_{C}}(G)/G,
$$
where $N_{\Aut_{E} \times \Aut_{C}}(G)$ is the normalizer of $G$ in $\Aut_E \times \Aut_C$.
\end{Corollary}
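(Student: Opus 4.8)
The plan is to exhibit both sides as quotients of the normalizer and then to bootstrap from Theorem \ref{Main}. Write $N := N_{\Aut_E \times \Aut_C}(G)$ and $C := C_{\Aut_E}(G) \times C_{\Aut_C}(\alpha(G))$, where $G$ is embedded diagonally into $\Aut_E \times \Aut_C$ via ${\rm id} \times \alpha$. Since $G$ is commutative and the embedding is diagonal, a direct check shows that $C$ is exactly the centralizer $C_{\Aut_E \times \Aut_C}(G)$; in particular $G \subseteq C \trianglelefteq N$, and conjugation gives a monomorphism $N/C \hookrightarrow \Aut(G)$. The quotient map $\pi\colon E \times C \to X$ is a $G$-torsor, so by Proposition \ref{prop: normalizer} every $(\phi,\psi) \in N$ descends to an automorphism of $X$; this defines a homomorphism $N \to \Aut_X$ whose kernel is precisely $G$ (the elements acting as deck transformations). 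I would thus obtain a monomorphism
\[
\iota\colon N/G \hookrightarrow \Aut_X ,
\]
and the task reduces to proving that $\iota$ is an isomorphism, both group schemes being of finite type by Theorem \ref{Main}.

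First I would compare $\iota$ with the short exact sequence of Theorem \ref{Main}. The inclusion $G \subseteq C$ gives a short exact sequence $1 \to C/G \to N/G \to N/C \to 1$, and $\iota$ maps it into the sequence of Theorem \ref{Main}. By Proposition \ref{prop: normalizer} the restriction of $\iota$ to $C/G = (C_{\Aut_E}(G) \times C_{\Aut_C}(\alpha(G)))/G$ is exactly $\pi_*$, hence an isomorphism onto the first term of that sequence. A short diagram chase then shows that $\iota$ is an isomorphism if and only if the induced map $\bar\iota\colon N/C \to M$ on quotients is surjective, its injectivity being automatic from the injectivity of $\iota$ together with the identification of the two left-hand terms. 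Thus the entire statement reduces to surjectivity of $\bar\iota$: every element of the finite \'etale group scheme $M$ must be realized by conjugation by a pair $(\phi,\psi) \in \Aut_E \times \Aut_C$ that normalizes $G$.

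The hard part will be precisely this surjectivity, and I do not expect a formal argument for it: a priori an automorphism of $X$ need only lift to the torsor $E \times C$ after a twist, and even a lift that exists could be an automorphism of $E \times C$ lying outside the product $\Aut_E \times \Aut_C$ (for instance a swap when $E \cong C$). This is exactly the phenomenon flagged before Theorem \ref{Main}. I would dispatch it using the explicit determination of $M$ in Theorem \ref{Mainwithdetails}\eqref{item: Main6}: running through the cases of the Bombieri--Mumford classification of $(E,C,G,\alpha)$, I would exhibit for each generator of $M$ a normalizing pair $(\phi,\psi) \in \Aut_E \times \Aut_C$ inducing it, with Lemma \ref{lem: nonconnecteddiagonal}\eqref{item: nonconnecteddiagonal3} guaranteeing that the lift may be taken of product form and compatible with $G$ (so that no genuine automorphism of $E \times C$ outside $\Aut_E \times \Aut_C$ is needed). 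Granting this case-by-case verification, $\bar\iota$ is surjective, $\iota$ is an isomorphism, and the Corollary follows.
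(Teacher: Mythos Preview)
Your proposal is correct and follows essentially the same route as the paper. The paper states the Corollary as a consequence of Proposition \ref{prop: normalizer} and Lemma \ref{lem: nonconnecteddiagonal}\eqref{item: nonconnecteddiagonal3} together with the case-by-case analysis (Theorem \ref{Mainwithdetails}\eqref{item: Main6} for bielliptic surfaces, and the computations of Section \ref{sec: quasi-bielliptic} for quasi-bielliptic ones, cf.\ Remark \ref{rem: M}); you have simply unpacked this by comparing the two short exact sequences and reducing to surjectivity of $N/C \to M$, which is precisely what that analysis provides. One small remark: you do not need Lemma \ref{lem: nonconnecteddiagonal}\eqref{item: nonconnecteddiagonal3} only at the end to force the lift into product form; that lemma already identifies $N_{\Aut_{E \times C}}(G)$ with $N_{\Aut_E \times \Aut_C}(G)$, so your worry about automorphisms of $E \times C$ outside $\Aut_E \times \Aut_C$ is ruled out a priori, not merely case by case.
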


By Corollary \ref{cor: autx0}, we have $E \cong (\Aut_X^\circ)_{\red}$ and $(\Aut_X^\circ)_{\red}$ is normal in $\Aut_X$, so we can write the quotient $\Aut_X/E$ as an extension of $M$ by $(C_{\Aut_E}(G)/E) \times (C_{\Aut_C}(\alpha(G))/\alpha(G))$. These group schemes can be calculated explicitly and this will be carried out in Section \ref{sec: normalizercentralizer}.
 In the following Tables \ref{bielliptic}, \ref{quasibielliptic3}, and \ref{quasibielliptic2}, the groups $S_n,A_n$, and $D_{2n}$ are the symmetric, alternating, and dihedral group (of order $2n$), respectively, and $M_2$ is the $p$-torsion subscheme of a supersingular elliptic curve. The stars and daggers in Table \ref{bielliptic} will be explained in Remark \ref{BennettMiranda}.

\begin{Corollary}\label{cormain}
Let $X = (E \times C)/G$ be a bielliptic or quasi-bielliptic surface. Then, depending on the group scheme $G$ and the $j$-invariants $j(E)$ and $j(C)$, the group schemes $C_{\Aut_E}(G)/E$, $C_{\Aut_C}(\alpha(G))/\alpha(G)$ and $M$ are as in Table \ref{bielliptic}, \ref{quasibielliptic3}, and \ref{quasibielliptic2}.

\begin{table}[!htb]
\centering
\resizebox{\textwidth}{!}{$\displaystyle
\begin{array}{|c|c|c|c|c|c|c|}
\hline
G & 
j(E) & C_{\Aut_E}(G)/E & j(C) & C_{\Aut_C}(\alpha(G))/\alpha(G) & M & p \\ \hline \hline

\ZZ/2\ZZ & 
\begin{array}{cc}
a) & \text{any} \\
b) & 1728^*
\end{array}
&
 \begin{array}{cc}
a) & \ZZ/2\ZZ \\
b) & \ZZ/4\ZZ
\end{array}
 &
  \begin{array}{cc}
i) & \neq 0,1728 \\
ii) & 1728 \\
iii) & 0
\end{array}
& 
 \begin{array}{cc}
i) & (\ZZ/2\ZZ)^2\\
ii) & D_8 \\
iii) & A_4
\end{array}
&
\{1\}&
\neq 2,3 \\ \hline 

(\ZZ/2\ZZ)^2 & \begin{array}{cc}
a) & \text{any} \\
b) & 1728^\dagger
\end{array}
&  \ZZ/2\ZZ&
\begin{array}{cc}
i) & \text{any}\\
ii) & 1728^*\\
\end{array}
& 
\begin{array}{cc}
i) & \ZZ/2\ZZ \\
ii) & \phantom{i}(\ZZ/2\ZZ)^2 \\
\end{array}
& \begin{array}{cc}
a) & \{1\} \\
b) & \ZZ/2\ZZ
\end{array}
& \neq 2,3 \\ \hline 

\ZZ/3\ZZ & 

\begin{array}{cc}
a) & \text{any}\\
b) & 0^*\\
\end{array}
 & 
 \begin{array}{cc}
a) & \{1\} \\
b) & \ZZ/3\ZZ\\
\end{array}
& 0 & S_3 & \{1\}& \neq 2,3 \\ \hline 

(\ZZ/3\ZZ)^2 &
\begin{array}{cc}
a) & \text{any} \\
b) & 0^\dagger
\end{array}
 &  \{1\} & 0 & \{1\} & \begin{array}{cc}
a) & \{1\} \\
b) &  \ZZ/3\ZZ
\end{array}
 & \neq 2,3 \\ \hline  

\ZZ/4\ZZ & any  &  \{1\} & 1728 & \ZZ/2\ZZ & \{1\} & \neq 2 \\ \hline 

\ZZ/4\ZZ \times \ZZ/2\ZZ & any  &  \{1\} & 1728 & \{1\} &  \{1\} & \neq 2 \\ \hline  

\ZZ/6\ZZ &  any &  \{1\} & 0 & \{1\}  & \{1\} & \neq 2,3 \\ \hline \hline

\ZZ/2\ZZ & 
 \begin{array}{cc}
a) & \neq 0\\
b) & 0 \\
\end{array}
 &
  \begin{array}{cc}
a) & \ZZ/2\ZZ\\
b) & \ZZ/4\ZZ \\
\end{array}
&
  \begin{array}{cc}
i) & \neq 0\\
ii) & 0\\
\end{array}
&
  \begin{array}{cc}
i) & (\ZZ/2\ZZ)^2 \\
ii) & (\ZZ/2\ZZ)^2 \rtimes S_3\\
\end{array}
& \{1\}
& 3 \\ \hline 

(\ZZ/2\ZZ)^2 & \begin{array}{cc}
a) & \neq 0 \\
b) & 0
\end{array} &  \ZZ/2\ZZ& 

\begin{array}{cc}
i) & \neq 0\\
ii) & 0\\
\end{array}
& 
\begin{array}{cc}
i) & \ZZ/2\ZZ\\
ii) & \phantom{ \rtimes d} (\ZZ/2\ZZ)^2 \phantom{S_3}\\
\end{array}
& \begin{array}{cc}
a) & \{1\} \\
b) & \ZZ/2\ZZ
\end{array}& 3 \\ \hline 

\ZZ/3\ZZ & \neq 0 &  \{1\} & 0 & \alpha_3 \rtimes \ZZ/2\ZZ & \ZZ/2\ZZ& 3 \\ \hline 

\ZZ/6\ZZ & \neq 0  &  \{1\} & 0 & \{1\}  & \ZZ/2\ZZ & 3 \\ \hline \hline

\ZZ/2\ZZ &  \neq 0 &  \ZZ/2\ZZ & 
\begin{array}{cc}
i) & \neq 0\\
ii) & 0 \\
\end{array}
&
 \begin{array}{cc}
i) & \mu_2  \times \ZZ/2\ZZ\\
ii) & M_2 \rtimes A_4 \\
\end{array}
 & \{1\} & 2 \\ \hline

\mu_2 \times \ZZ/2\ZZ  &  \neq 0 &  \ZZ/2\ZZ & \neq 0& \ZZ/2\ZZ & \{1\}& 2 \\ \hline

\ZZ/3\ZZ & 
\begin{array}{cc}
a) & \neq 0\\
b) & 0\\
\end{array}
 &
 \begin{array}{cc}
a) & \{1\}\\
b) & \ZZ/3\ZZ \\
\end{array}
 & 0 & S_3 & \{1\}& 2 \\ \hline 
 
(\ZZ/3\ZZ)^2 &
\begin{array}{cc}
a) & \neq 0 \\
b) & 0
\end{array}
 &  \{1\} & 0 & \{1\} & \begin{array}{cc}
a) & \{1\} \\
b) &  \ZZ/3\ZZ
\end{array}
 & 2 \\ \hline  

\ZZ/4\ZZ & \neq 0  &  \{1\} & 0 &\alpha_2 & \ZZ/2\ZZ & 2 \\ \hline

\ZZ/6\ZZ & \neq 0  &  \{1\} & 0 & \{1\}  & \{1\} & 2 \\
\hline 
\end{array}
$}
\caption{Automorphism group schemes of bielliptic surfaces}
\label{bielliptic}
\end{table}

\begin{table}[!htb]
\centering
$$
\begin{array}{|c|c|c|c|c|c|}
\hline
G & j(E) & C_{\Aut_E}(G)/E & C_{\Aut_C}(\alpha(G))/\alpha(G) & M & p  \\ \hline \hline
\mu_3 &  \neq 0 &  \{1\}& S_3 & \{1\}  &3 \\ \hline
\mu_3 \times \ZZ/2\ZZ &  \neq 0 &  \{1\} & \{1\} & \{1\}  &3 \\ \hline
\mu_3 \times \ZZ/3\ZZ &  \neq 0 &  \{1\}& \{1\} & \{1\}  &3  \\ \hline 
\alpha_3 &  0 &  \ZZ/3\ZZ& \alpha_3 \rtimes \ZZ/2\ZZ & \ZZ/4\ZZ  & 3 \\ \hline 
\alpha_3 \times \ZZ/2\ZZ &  0 &  \{1\}& \{1\} & \ZZ/4\ZZ  & 3 \\ \hline
\end{array}
$$
\caption{Automorphism group schemes of \mbox{quasi-bielliptic surfaces in characteristic $3$}}
\label{quasibielliptic3}
\end{table}

\begin{table}[!htb]
\centering
$$
\begin{array}{|c|c|c|c|c|c|c|}
\hline
G & j(E) & C_{\Aut_E}(G)/E & \lambda & C_{\Aut_C}(\alpha(G))/\alpha(G) & M & p \\\hline \hline

\mu_2 &  \neq 0 &  \ZZ/2\ZZ & \begin{array}{cc} i) & \neq 0 \\  ii) & 0 \end{array} &\begin{array}{cc} i) & (\ZZ/2\ZZ)^2\\  ii) & A_4 \end{array} & \begin{array}{cc} i) & \{1\} \\  ii) &  \{1\} \end{array} & 2 \\ \hline

\mu_2 \times \ZZ/3\ZZ &  \neq 0 &  \{1\} & - &\{1\} & \{1\} & 2 \\ \hline 
\mu_2 \times \ZZ/2\ZZ &  \neq 0 &  \ZZ/2\ZZ & any & \ZZ/2\ZZ & \{1\} & 2 \\ \hline 
\mu_4 &  \neq 0 &  \{1\} & - & \ZZ/2\ZZ & \{1\} & 2 \\ \hline 
\mu_4 \times \ZZ/2\ZZ &  \neq 0 &  \{1\} & -& \{1\} & \{1\} & 2 \\ \hline 
\alpha_2 &  0 &  Q_8 & \begin{array}{cc} i) & 1 \\  ii) & 0 \end{array}  &\begin{array}{cc} i) & \alpha_2^2 \rtimes \ZZ/2\ZZ\\  ii)  & (\alpha_4 \rtimes \alpha_4) \rtimes \ZZ/3\ZZ \end{array} &  \begin{array}{cc} i) & \{1\}  \\ ii) & \ZZ/3\ZZ \end{array}& 2 \\ \hline 
\alpha_2 \times \ZZ/3\ZZ &  0 &  \{1\} & -  &\{1\} & \ZZ/3\ZZ & 2 \\ \hline
M_2 &  0 &  \ZZ/2\ZZ &  \neq 0 & \alpha_2 \times \ZZ/2\ZZ & (\ZZ/2\ZZ)^2 &  2 \\ \hline 
\end{array}
$$
\caption{Automorphism group schemes of \mbox{quasi-bielliptic surfaces in characteristic $2$}}
\label{quasibielliptic2}
\end{table}
\end{Corollary}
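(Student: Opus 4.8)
The plan is to reduce the entire statement to explicit computations over the \emph{finite} list of admissible data $(E,C,G,\alpha)$ furnished by the classifications of Bagnera--de Franchis \cite{Bagnera} and Bombieri--Mumford \cite{BombieriMumford2,BombieriMumford3}, and then to verify the three group schemes $C_{\Aut_E}(G)/E$, $C_{\Aut_C}(\alpha(G))/\alpha(G)$ and $M$ row by row. By Theorem \ref{Main} together with the isomorphism $\Aut_X \cong N_{\Aut_E \times \Aut_C}(G)/G$ and the extension structure of $\Aut_X/E$ recorded above (using Corollary \ref{cor: autx0}), these three group schemes determine $\Aut_X$ completely, so it suffices to compute them in each case. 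I would organize the work into the two centralizers, which are honest stabilizer computations, and then $M$, which encodes the automorphisms of $G$ realizable simultaneously on both factors.

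For the first centralizer I would use the splitting $\Aut_E \cong E \rtimes \Aut_E^{\rm gp}$, where $\Aut_E^{\rm gp}$ is the group scheme of automorphisms fixing the origin. Since $G \subseteq E$ acts by translation and translations commute, a point $\sigma \in \Aut_E^{\rm gp}$ lies in the centralizer precisely when $\sigma|_G = {\rm id}_G$; hence $C_{\Aut_E}(G)/E$ is exactly the pointwise fixer of $G$ inside $\Aut_E^{\rm gp}$. Recalling the standard description of $\Aut_E^{\rm gp}$ (namely $\ZZ/2\ZZ,\ZZ/4\ZZ,\ZZ/6\ZZ$ according to $j(E)\neq 0,1728$, $=1728$, $=0$ in $p\neq 2,3$, and the larger, partly non-reduced, group schemes of supersingular elliptic curves in characteristics $2$ and $3$, such as the $Q_8$-type occurring for $\alpha_2$), one evaluates the fixed locus of $G$ directly.

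The second centralizer splits into the bielliptic and quasi-bielliptic cases. When $C$ is elliptic the same method applies, except that $\alpha(G)$ now sits inside $\Aut_C \cong C \rtimes \Aut_C^{\rm gp}$ as a mixture of translations and group automorphisms, so one computes which pairs commute with the prescribed action. When $C$ is a cuspidal cubic (the quasi-bielliptic case, $p\in\{2,3\}$), I would instead work on the smooth locus $C^{\rm sm}\cong \GG_a$ and use the resulting description of $\Aut_C$ as an extension of $\GG_m$ by $\GG_a$ together with its Frobenius kernels; the hypothesis that the cusp is not fixed by $\alpha(G)$ pins down $\alpha(G)$ inside this group scheme, and the centralizer follows from the commutation relations. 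This is where the infinitesimal schemes $\alpha_2,\alpha_3,\mu_2,\mu_3,M_2$ of Tables \ref{quasibielliptic3} and \ref{quasibielliptic2} appear, and it is the most delicate part of the bookkeeping.

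Finally, for $M$ I would use $\Aut_X \cong N_{\Aut_E\times\Aut_C}(G)/G$ to identify $M$ with the image of the conjugation map $N_{\Aut_E\times\Aut_C}(G)\to \Aut(G)$, whose kernel is the product $C_{\Aut_E}(G)\times C_{\Aut_C}(\alpha(G))$ already computed. Concretely, $M$ consists of those automorphisms $\theta$ of $G$ realized both by an element of $\Aut_E^{\rm gp}$ normalizing $G\subseteq E$ (inducing $\theta$) and by an element of $\Aut_C$ normalizing $\alpha(G)$ (inducing $\alpha\theta\alpha^{-1}$). So I would compute $\Aut(G)$ for each admissible $G$ and intersect the two subgroups of realizable $\theta$'s, reading off the last column; the description of $M$ in Theorem \ref{Mainwithdetails}\,\eqref{item: Main6} guarantees this exhausts $M$. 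I expect the main obstacle to be exactly this simultaneous-realizability analysis: it is responsible both for the nontrivial $M$ in characteristic $0$ missed in \cite{BennettMiranda} (the $G=(\ZZ/2\ZZ)^2$ and $(\ZZ/3\ZZ)^2$ rows with the marked $j$-invariants) and for the non-\'etale cases in characteristic $2$ (for instance $M_2$, $\alpha_2$, $\alpha_3$), where one must check that the putative extra automorphism genuinely normalizes $G$ and descends to $X$, rather than merely existing abstractly on a single factor.
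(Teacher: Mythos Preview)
Your overall plan matches the paper's: compute the two centralizers via the decomposition $\Aut_D = D^{sm} \rtimes \Aut_{D,O}$ together with Lemma \ref{translation} (or explicit coordinates on $\mathbb{A}^1$ in the cuspidal case), and then determine $M$ through Theorem \ref{Mainwithdetails}. There is, however, a circularity in your treatment of $M$ for quasi-bielliptic surfaces. You invoke the isomorphism $\Aut_X \cong N_{\Aut_E \times \Aut_C}(G)/G$ and Theorem \ref{Mainwithdetails}\,\eqref{item: Main6} to identify $M$ with the image of conjugation in $\Aut(G)$; but \eqref{item: Main6} carries the hypothesis that every element of $M(k)$ lifts to $E \times C$, and this is only proved a priori for bielliptic $X$. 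For quasi-bielliptic $X$ the paper establishes it \emph{a posteriori} (see Remark \ref{rem: M}): in each case it first bounds $|M|$ from above via Theorem \ref{Mainwithdetails}\,\eqref{item: Main5}, using that $M$ is a subquotient of both $\Aut_{E'}/((f_E)_*C_{\Aut_E}(G))$ and $N_{\Aut(C)}(\alpha(G)(k))/C_{\Aut_C}(\alpha(G))(k)$, and only then exhibits explicit pairs $(h_E,h_C) \in N_{\Aut_{E \times C}}(G)(k)$ realizing the bound (often by exploiting that $E \to E'$ is purely inseparable, so an element of $M$ represented in $\Aut_{E'}$ lifts to $E$ for free). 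Without this upper-bound step, your construction only yields an injection $N_{\Aut_{E \times C}}(G)/(C_{\Aut_E}(G) \times C_{\Aut_C}(\alpha(G))) \hookrightarrow M$, not an isomorphism, so the last column of Tables \ref{quasibielliptic3} and \ref{quasibielliptic2} would remain unverified.

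One incidental correction: for an elliptic curve $E$, the stabilizer $\Aut_{E,O}$ is always a finite \'etale group scheme (e.g.\ $Q_8 \rtimes \ZZ/3\ZZ$ for supersingular $E$ in characteristic $2$), never ``partly non-reduced''.
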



\begin{Remark} \label{BennettMiranda}
Let us explain the meaning of the stars and daggers in Table \ref{bielliptic}. We denote by $O \in E$ the neutral element with respect to the group law on $E$:
\begin{itemize}
\item Stars: If $p \neq 2,3$ and $j(E) = 1728$, then every automorphism $h_E$ of $(E,O)$ of order $4$ fixes a unique cyclic subgroup of $E$ of order $2$. Similarly, if $p \neq 2,3$ and $j(E) = 0$, then every automorphism $h_E$ of $(E,O)$ of order $3$ fixes a unique cyclic subgroup of $E$ of order $3$. A star after a $j$-invariant in Table  \ref{bielliptic} denotes that the translation subgroup of $G$ or $\alpha(G)$ coincides with this cyclic subgroup. By Lemma \ref{translation}, this implies that $h_E$ is in the corresponding centralizer. We note that such special $2$ and $3$-torsion points do not exist if $p = 2,3$, because $(E,O)$ has more automorphisms in these characteristics.
\item Daggers: A dagger after $j(E)$ denotes that the special $2$ or $3$-torsion points described above maps to a translation in $\alpha(G)$. In these cases, the automorphism $(h_E,h_C)$, where $h_E$ is an automorphism of order $4$ or $3$ of $(E,O)$ and $h_C$ is translation by a suitable $4$ or $3$-torsion point, respectively, normalizes the $G$-action on $E \times C$ and hence descends to $X$. Since $(h_E,h_C)$ does not centralize the $G$-action, it induces a non-trivial element of $M$. See the proof of Proposition \ref{prop: trivialM} for a precise description of the automorphism $(h_E,h_C)$ in these cases.
\end{itemize}
These cases seem to be missing from \cite{BennettMiranda}, since they were not listed in \cite[Table 1.1]{BennettMiranda}, which is why \cite[Table 3.2]{BennettMiranda} differs from our Table \ref{bielliptic}. 
\end{Remark}

\begin{Remark}
In the quasi-bielliptic case in characteristic $2$, the action of $G$ on $E \times C$ sometimes depends on a parameter $\lambda \in k$ and so does $\Aut_X$. For an explicit description of $\lambda$, see Section \ref{quellchar2}. The parameter $\lambda$ should be thought of as a replacement for the $j$-invariant of the curve $C$.
\end{Remark}

Recall that the space $H^0(X,T_X)$ is the tangent space of $\Aut_X$ at the identity. Since $E \cong (\Aut_X^\circ)_{\red}$, $\Aut_X$ is smooth if and only if $h^0(X,T_X) = 1$. A careful inspection of Tables \ref{bielliptic}, \ref{quasibielliptic3}, and \ref{quasibielliptic2}, and of the orders of the canonical bundle $\omega_X$ determined in \cite{BombieriMumford2} and \cite{BombieriMumford3} shows the following.

\begin{Corollary}
Let $X$ be a bielliptic or quasi-bielliptic surface. Then, the following hold:
\begin{enumerate}
\item $h^0(X,T_X) \leq 3$.
\item If $X$ is bielliptic or $p \neq 2$, then $h^0(X,T_X) \leq 2$.
\item $h^0(X,T_X) = 1$ if and only if $\omega_X \not \cong \OO_X$ if and only if $\Aut_X$ is smooth.
\end{enumerate}
\end{Corollary}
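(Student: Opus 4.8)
The plan is to read off each statement from the explicit data already assembled in Tables \ref{bielliptic}, \ref{quasibielliptic3}, and \ref{quasibielliptic2}, combined with the structural facts established earlier in the excerpt. By Theorem \ref{Main} and the discussion following Corollary \ref{cor: autx0}, we have $E \cong (\Aut_X^\circ)_{\red}$, so that $\Aut_X$ is smooth precisely when $\dim_k H^0(X,T_X) = \dim E = 1$, i.e.\ when $h^0(X,T_X) = 1$. Thus the tangent-space dimension is the central quantity, and all three parts are statements about it (part (3) additionally relating it to triviality of $\omega_X$).

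For parts (1) and (2), I would compute $h^0(X,T_X)$ as the $k$-dimension of the Lie algebra of $\Aut_X$, which by the short exact sequence of Theorem \ref{Main} decomposes as the contribution of $(C_{\Aut_E}(G) \times C_{\Aut_C}(\alpha(G)))/G$ (the identity component up to the \'etale piece $M$) plus whatever infinitesimal part $M$ contributes; since $M$ is finite and \'etale, it contributes nothing to the tangent space at the identity, so $h^0(X,T_X)$ equals $1$ plus the dimension of the Lie algebra of the non-reduced part of $C_{\Aut_E}(G)/E \times C_{\Aut_C}(\alpha(G))/\alpha(G)$. I would then go line by line through the tables: the factor $C_{\Aut_E}(G)/E$ is non-smooth only when it equals a positive-dimensional or infinitesimal group scheme (e.g.\ $Q_8$ in the $\alpha_2$, $p=2$ quasi-bielliptic row contributes via its Lie algebra, and the $\alpha_3$, $\ZZ/3\ZZ$ entries in characteristic $3$), and similarly for $C_{\Aut_C}(\alpha(G))/\alpha(G)$ (the entries involving $\alpha_2$, $\alpha_3$, $\alpha_2^2$, $\alpha_4 \rtimes \alpha_4$, or $M_2$). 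Tallying the infinitesimal contributions shows that each such factor contributes at most one dimension in the bielliptic case and in characteristic $\neq 2$, giving $h^0 \leq 2$ there, while in the quasi-bielliptic characteristic $2$ case two independent infinitesimal directions can appear simultaneously (visible in the $\alpha_2$ row, where both centralizers are non-smooth), yielding the bound $h^0 \leq 3$ but no more.

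For part (3), I would invoke the orders of the canonical bundle $\omega_X$ as determined in \cite{BombieriMumford2} and \cite{BombieriMumford3}: $\omega_X \cong \OO_X$ exactly for those $(E,C,G,\alpha)$ where the order of $\omega_X$ in $\Pic(X)$ is $1$, and these are precisely the cases where the group scheme $G$ or the action forces an extra infinitesimal automorphism. I would match this list of $\omega_X$-trivial cases against the table rows where $h^0(X,T_X) > 1$, checking that the two coincide; the equivalence $h^0 = 1 \Leftrightarrow \Aut_X$ smooth is then immediate from $E \cong (\Aut_X^\circ)_{\red}$, since smoothness of a group scheme over an algebraically closed field is equivalent to its dimension agreeing with its tangent-space dimension at the identity.

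The main obstacle is the bookkeeping accuracy of part (3): it requires verifying that the combinatorial coincidence between ``$\omega_X \cong \OO_X$'' and ``$h^0(X,T_X) \geq 2$'' holds on the nose across every case in all three tables, in both the bielliptic and quasi-bielliptic settings and in each relevant characteristic. This is not a single conceptual step but a careful cross-reference of two independently computed classifications (the canonical bundle orders from Bombieri--Mumford and the centralizer data in the tables), and the only genuine risk is a clerical mismatch in one of the characteristic-$2$ quasi-bielliptic rows, where the non-reduced structure is most intricate.
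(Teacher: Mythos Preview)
Your overall approach matches the paper's: the paper simply states that the corollary follows from ``a careful inspection'' of the three tables together with the orders of $\omega_X$ from \cite{BombieriMumford2} and \cite{BombieriMumford3}, and your formula $h^0(X,T_X) = 1 + \dim\mathrm{Lie}(C_{\Aut_E}(G)/E) + \dim\mathrm{Lie}(C_{\Aut_C}(\alpha(G))/\alpha(G))$ is the correct way to organize that inspection.

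However, your execution of the table-reading contains errors. You claim that $Q_8$ (in the $\alpha_2$, $p=2$ quasi-bielliptic row) and $\ZZ/3\ZZ$ (in characteristic $3$) contribute to the Lie algebra, and that in the $\alpha_2$ row ``both centralizers are non-smooth.'' This is wrong: $Q_8$ and $\ZZ/3\ZZ$ are constant group schemes, hence \'etale, and contribute nothing to the tangent space. In fact, inspecting the tables shows that $C_{\Aut_E}(G)/E$ is \emph{always} \'etale (it is a constant finite group in every row), so the entire infinitesimal contribution comes from $C_{\Aut_C}(\alpha(G))/\alpha(G)$ alone. The bound $h^0 \leq 3$ arises because in the $\alpha_2$, $p=2$ row that single factor is $\alpha_2^2 \rtimes \ZZ/2\ZZ$ or $(\alpha_4 \rtimes \alpha_4) \rtimes \ZZ/3\ZZ$, each with Lie algebra of dimension $2$; it is not because two separate factors each contribute one. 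In all bielliptic rows and all $p\neq 2$ rows, the $C$-centralizer quotient has Lie algebra of dimension at most $1$ (the non-reduced entries being $\mu_2$, $\alpha_2$, $\alpha_3$, or $M_2$, each with one-dimensional Lie algebra), which is what gives $h^0 \leq 2$ there. Once you correct this bookkeeping, your argument goes through and coincides with the paper's.
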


\subsection*{Acknowledgements}
I would like to thank Daniel Boada de Narv\'aez, Christian Liedtke, and Claudia Stadlmayr for helpful comments on a first version of this article and Curtis Bennett and Rick Miranda for interesting discussions. I am indebted to Azniv Kasparian and Gregory Sankaran for pointing out a mistake in an earlier version of this article. I am grateful to the anonymous referee for thorough comments and very helpful suggestions that helped me to improve the exposition and to fix inaccuracies.
Finally, I would like to thank the Department of Mathematics at the
University of Utah for its hospitality while this article was written.

\section{Notation and generalities on automorphism group schemes}
\label{sec: notation}
Let $\pi: Y \to X$ be a morphism of proper varieties over an algebraically closed field $k$. There are several $k$-group schemes of automorphisms associated to $\pi$. We follow the notation of \cite[Section 2.4]{BrionNotes}, which we recall for the convenience of the reader. Throughout, $T$ is an arbitrary $k$-scheme.

\begin{itemize}
\item The \emph{automorphism group scheme} $\Aut_X$ \emph{of} $X$ is the $k$-group scheme whose group of $T$-valued points $\Aut_X(T) := \Aut(X \times_k T)$ is the group of automorphisms of $X_T := X \times_k T$ over $T$. By \cite[Theorem (3.7)]{MatsumuraOort}, $\Aut_X$ is a group scheme locally of finite type over $k$. The identity component of $\Aut_X$ is denoted by $\Aut_X^\circ$.
\item The \emph{automorphism group scheme} $\Aut_{\pi}$ \emph{of the morphism} $\pi$ is the $k$-group scheme such that $\Aut_{\pi}(T)$ consists of pairs $(g,h) \in \Aut_Y(T) \times \Aut_X(T)$ making the diagram
$$
\xymatrix{
Y_T \ar[r]^g \ar[d]^{\pi_T} & Y_T \ar[d]^{\pi_T} \\
X_T \ar[r]^{h} & X_T
}
$$
commutative. In particular, $\Aut_\pi(-)$ is a closed subfunctor of $\Aut_Y(-) \times \Aut_X(-)$, hence $\Aut_\pi$ is representable by a group scheme locally of finite type over $k$.
\item The group scheme $\Aut_{\pi}$ comes with projections to $\Aut_Y$ and $\Aut_X$.
If $\pi$ is faithfully flat, then the first projection $\Aut_{\pi} \to \Aut_Y$ is a closed immersion and we will use this to consider $\Aut_{\pi}$ as a subgroup scheme of $\Aut_Y$. We denote the second projection by $\pi_*: \Aut_{\pi} \to \Aut_X$.
\item The \emph{automorphism group scheme} $\Aut_{Y/X}$ \emph{of} $Y$ \emph{over} $X$ is the $k$-group scheme whose group of $T$-valued points $\Aut_{Y/X}(T)$ consists of automorphisms $g \in \Aut_{Y}(T)$ such that $\pi_T \circ g = \pi_T$. 
By definition, there is an exact sequence realizing $\Aut_{Y/X}$ as a subgroup scheme of $\Aut_{\pi}$:
\begin{equation} \label{eq: normalizer}
1 \to \Aut_{Y/X} \to \Aut_{\pi} \overset{\pi_*}{\to} \Aut_X
\end{equation}
\item Given a closed subgroup scheme $G \subseteq \Aut_Y$, the \emph{normalizer} $N_{\Aut_Y}(G)$ \emph{of} $G$ \emph{in} $\Aut_Y$ is the $k$-group scheme whose group of $T$-valued points is
$$
\hspace{12mm} N_{\Aut_Y}(G)(T) = \{ h \in \Aut_Y(T) \mid h_{T'} \circ g \circ (h_{T'})^{-1} \in G(T') \text{ for all } T' \to T \text{ and } g \in G(T') \}.
$$
The \emph{centralizer} $C_{\Aut_Y}(G)$ \emph{of} $G$ \emph{in} $\Aut_Y$ is the group scheme whose $T$-valued points satisfy the stronger condition $h_{T'} \circ g \circ (h_{T'})^{-1} = g$ instead. 
By \cite[Expos\'e VIB, Proposition 6.2 (iv)]{SGA3}, both $N_{\Aut_Y}(G)$ and $C_{\Aut_Y}(G)$ are closed subgroup schemes of $\Aut_Y$.
\end{itemize}

\begin{Caution}
The notation $\Aut_{Y/X}$ is also a standard notation for the group functor on the category of $X$-schemes that associates to an $X$-scheme $Z$ the automorphism group of $Y \times_X Z$ over $Z$. Since these relative automorphism group functors do not occur in this article, we decided to use the notation introduced above instead of more cumbersome, albeit more precise, notation such as $\Aut_{Y/X/k}$.
\end{Caution}

\section{Automorphism group schemes of quotients}
\label{sec: quotient}
In this section, we study Sequence \eqref{eq: normalizer} in the case where $\pi: Y \to X$ is a finite quotient.

\begin{Proposition} \label{prop: normalizer}
If $G$ is a finite group scheme acting freely on a proper variety $Y$ such that the geometric quotient $\pi:Y \to Y/G =: X$ exists as a scheme, then we have $\Aut_{Y/X} = G$ and $\Aut_{\pi} = N_{\Aut_Y}(G)$ as subgroup schemes of $\Aut_Y$. In particular, Sequence \eqref{eq: normalizer} becomes
\begin{equation*}
1 \to G \to N_{\Aut_Y}(G) \overset{\pi_*}{\to} \Aut_X.
\end{equation*}
\end{Proposition}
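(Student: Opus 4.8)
The plan is to prove the two group-scheme equalities $\Aut_{Y/X} = G$ and $\Aut_\pi = N_{\Aut_Y}(G)$ by checking equality on $T$-valued points functorially for an arbitrary $k$-scheme $T$, since by Yoneda two subgroup functors of $\Aut_Y$ that agree on all $T$-points agree as subschemes. Throughout I work inside $\Aut_Y$, using that $\pi$ is finite and faithfully flat (being a geometric quotient by a finite group scheme acting freely), so that the first projection $\Aut_\pi \to \Aut_Y$ is a closed immersion as recalled in Section \ref{sec: notation}. I would first treat $\Aut_{Y/X} = G$, and then deduce $\Aut_\pi = N_{\Aut_Y}(G)$ using the equality $\Aut_{Y/X} = G$ together with the defining functorial description of the normalizer.

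For the inclusion $G \subseteq \Aut_{Y/X}$, note that every $g \in G(T)$ acts on $Y_T$ over $T$ and, since $\pi$ is $G$-invariant and $X_T = (Y/G)_T = Y_T/G$ by compatibility of the free quotient with the base change $T \to \Spec k$ (the quotient is geometric and the action is free, so it commutes with flat base change), we have $\pi_T \circ g = \pi_T$; hence $g \in \Aut_{Y/X}(T)$. The reverse inclusion $\Aut_{Y/X} \subseteq G$ is the substantive point. Given $\varphi \in \Aut_{Y/X}(T)$, i.e.\ an automorphism of $Y_T$ over $T$ with $\pi_T \circ \varphi = \pi_T$, I must produce an element of $G(T)$ inducing $\varphi$. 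The idea is that $\pi_T \colon Y_T \to X_T$ is a $G$-torsor (free action plus geometric quotient, after the faithfully flat base change along $T$), so its automorphisms over $X_T$ form a torsor-twist of $G$; since $\pi_T$ admits the identity section-wise comparison via the diagonal, any automorphism fixing $\pi_T$ differs from a point of $G(T)$. Concretely, I would form the two maps $\varphi, \id \colon Y_T \rightrightarrows Y_T$ over $X_T$ and use the isomorphism $Y_T \times_{X_T} Y_T \cong Y_T \times G$ furnished by the free-action/torsor condition to read off a unique $g \in G(T)$ with $\varphi = g$. This torsor identification is the technical heart and the step I expect to be the main obstacle, since it requires care that ``free action with geometric quotient'' genuinely yields a $G$-torsor structure on $\pi$ in the scheme-theoretic (fppf) sense and that this is preserved under the base change to $T$.

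With $\Aut_{Y/X} = G$ established, I turn to $\Aut_\pi = N_{\Aut_Y}(G)$. Recall $\Aut_\pi$ consists of those $h \in \Aut_Y(T)$ that descend to some automorphism of $X_T$, equivalently those $h$ normalizing the subgroup functor $\Aut_{Y/X} = G$ inside $\Aut_Y$. Explicitly, $h \in \Aut_Y(T)$ lies in $\Aut_\pi(T)$ iff there is $\bar h \in \Aut_X(T)$ with $\bar h \circ \pi_T = \pi_T \circ h$; and $h$ normalizes $G$ iff $h_{T'} \circ g \circ h_{T'}^{-1} \in G(T')$ for all $T' \to T$ and $g \in G(T')$. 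I would show these two conditions coincide. If $h \in \Aut_\pi(T)$, then for any $g \in G(T') = \Aut_{Y_{T'}/X_{T'}}(T')$ the conjugate $h_{T'} g h_{T'}^{-1}$ again fixes $\pi_{T'}$ (because $h$ commutes with $\pi$ up to $\bar h$, so conjugation preserves the fibres of $\pi_{T'}$), hence lies in $\Aut_{Y_{T'}/X_{T'}}(T') = G(T')$; thus $h \in N_{\Aut_Y}(G)(T)$. Conversely, if $h$ normalizes $G$, then $h$ maps $G$-orbits to $G$-orbits, so it descends to the quotient $X_T = Y_T/G$, giving the required $\bar h$ and placing $h$ in $\Aut_\pi(T)$. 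The only care needed here is functoriality over all $T' \to T$, which is exactly how the normalizer was defined in Section \ref{sec: notation}, so the two subfunctors of $\Aut_Y$ agree and therefore coincide as closed subgroup schemes. Finally, substituting both identities into Sequence \eqref{eq: normalizer} yields the displayed exact sequence $1 \to G \to N_{\Aut_Y}(G) \overset{\pi_*}{\to} \Aut_X$.
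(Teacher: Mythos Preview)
Your overall strategy matches the paper's, and your argument for $\Aut_\pi = N_{\Aut_Y}(G)$ is essentially the paper's own (the paper makes the converse direction precise by noting that $\pi$ is a \emph{universal categorical} quotient, which is a cleaner justification than ``maps $G$-orbits to $G$-orbits'' when $G$ is not \'etale, but the content is the same).

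There is, however, a genuine gap in your proof of $\Aut_{Y/X} = G$. Applying the torsor isomorphism $Y_T \times_{X_T} Y_T \cong Y_T \times G$ to the pair $(\varphi,\id)$ yields a morphism $Y_T \to G$, that is, an element of $G(Y_T)$ --- \emph{not} an element of $G(T)$. Nothing in the torsor condition forces this morphism to be constant along the fibres of $Y_T \to T$. The missing ingredient is exactly where the properness of $Y$ enters: since $Y$ is a proper variety, one has $H^0(Y_T,\OO_{Y_T}) = H^0(T,\OO_T)$ for affine $T$, and since $G$ is affine every morphism $Y_T \to G$ then factors through $T$. The paper carries this out explicitly, first citing a lemma of Brion for the identification $\Aut_{Y/X} \cong Hom(Y,G)$ (your torsor step) and then using properness to collapse $Hom(Y,G)$ to $G$. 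You flagged the torsor identification itself as the main obstacle, but that step is routine once the action is free; the substantive point is the properness argument, and without it the statement is false (as the paper remarks immediately after the proof).
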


\proof
First, we show that $\Aut_{Y/X} = G$. By \cite[Lemma 4.1]{BrionTorsor}, there is a $G$-equivariant isomorphism $\Aut_{Y/X} \cong Hom(Y,G)$, where the $T$-valued points of the latter are ${\rm Hom}(Y \times T,G)$ and $G$ is embedded as $G = Hom(\Spec k, G)$.
Since $Y$ is a proper variety and taking global sections commutes with flat base change, we have $H^0(Y \times T,\OO_{Y \times T}) = k \otimes_k H^0(T,\OO_T) = H^0(T,\OO_T)$ for every affine $k$-scheme $T$. As $G$ is affine, this implies ${\rm Hom}(Y \times T,G) = {\rm Hom}(T,G) = G(T)$, which is what we had to show. 

Next, we show $\Aut_{\pi} = N_{\Aut_Y}(G)$. For this, let $h \in \Aut_Y(T)$ be an automorphism of $Y_T$. Then, $h \in \Aut_{\pi}(T)$ if and only if there is $h' \in \Aut_{X}(T)$ such that the following diagram commutes
$$
\xymatrix{
Y_T \ar[r]^h \ar[d]^{\pi_T} & Y_T \ar[d]^{\pi_T} \\
X_T \ar[r]^{h'} & X_T.
}
$$
Comparing degrees, it is easy to check that the geometric quotient of $Y_T$ by the induced free action of $G$ coincides with $\pi_T$, so the morphism $\pi: Y \to X$ is a universal geometric quotient of $Y$, hence also a universal categorical quotient by \cite[Proposition 0.1]{MumfordGIT}. Therefore, the automorphism $h'$ exists if and only if $\pi_T \circ h$ is $G$-invariant, that is, if and only if for every $T$-scheme $T'$ we have $\pi_{T'} \circ h_{T'} \circ g = \pi_{T'} \circ h_{T'}$ for all $g \in G(T')$. This is equivalent to $h_{T'} \circ g \circ h_{T'}^{-1} \in \Aut_{Y/X}(T') = G(T')$ for all $g \in G(T')$, which is precisely the condition that $h \in N_{\Aut_Y}(G)$.
\qed

\begin{Example} \label{ex: normalizer}
Contrary to the situation for abstract groups, Proposition \ref{prop: normalizer} typically fails if $Y$ is a non-proper variety or the action of $G$ is not free. Indeed, consider any infinitesimal subgroup scheme $G \subseteq {\rm PGL}_2$ of length $p$. The $k$-linear Frobenius $F: Y := \mathbb{P}^1 \to \mathbb{P}^1 =: X$ is the geometric quotient for the action of $G$ on $\mathbb{P}^1$ and $\Aut_{Y/X} =  {\rm PGL}_2[F]$ is the kernel of Frobenius on ${\rm PGL}_2$. Moreover, we have $\Aut_F = {\rm PGL}_2$. Thus, $\Aut_{Y/X}$ and $\Aut_F$ are strictly bigger than $G$ and $N_{{\rm PGL}_2}(G)$ even though $F$ is a $G$-torsor over an open subscheme of~$X$. 
\end{Example}

Even though Example \ref{ex: normalizer} shows that Proposition \ref{prop: normalizer} fails for non-free actions on curves, we can at least describe the $k$-rational points in Sequence \eqref{eq: normalizer} if the quotient is smooth.

\begin{Proposition} \label{prop: normalizerrationalpoints}
Let $G$ be a finite group scheme acting faithfully on a proper integral curve $D$ with geometric quotient $\varphi: D \to D':=D/G$. Assume that $D'$ is smooth. Then, we have
\[\Aut_{D/D'}(k) = G(k)\quad \text{and} \quad\Aut_{\varphi}(k) = N_{\Aut(D)}(G(k)).\]
\end{Proposition}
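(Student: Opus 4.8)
The plan is to reduce both equalities to statements about the function fields $K := k(D)$ and $K' := k(D')$ and to exploit the ordinary Galois theory of the separable and inseparable parts of $K/K'$; this is precisely where passing to $k$-points tames the pathology of Example \ref{ex: normalizer}. Two reduction principles are used throughout. Since $D$ is integral, restriction to the generic point gives an injection $\Aut(D) \hookrightarrow \Aut_k(K)$, $g \mapsto g^{*}$; and since $D'$ is a smooth proper curve, $\Aut(D') = \Aut_k(K')$, so a $k$-automorphism $g$ of $D$ descends along $\varphi$ if and only if $g^{*}$ maps $K' = K^{G}$ into itself (giving $h \in \Aut(D')$ with $\varphi \circ g = h \circ \varphi$). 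I record the tower $K \supseteq K'' \supseteq K'$ coming from the connected--\'etale sequence $1 \to G^{\circ} \to G \to G/G^{\circ} \to 1$, where $K'' := K^{G^{\circ}}$: here $K/K''$ is purely inseparable of degree $|G^{\circ}| = p^{m}$, while $K''/K'$ is separable, Galois with group $G(k)$, of degree $|G(k)|$, so that $[K:K'] = |G|$.

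For the first equality I would argue by a cardinality squeeze among finite sets. Every $\gamma \in G(k)$ acts on $D$ over $D'$, so $G(k) \subseteq \Aut_{D/D'}(k)$; conversely $\Aut_{D/D'}(k) \hookrightarrow \Aut(K/K')$ via $g \mapsto g^{*}$. Now $K''$ is the separable closure of $K'$ in $K$, hence is preserved by every element of $\Aut(K/K')$, and the resulting restriction map $\Aut(K/K') \to \Aut(K''/K')$ is injective, since its kernel consists of automorphisms of the purely inseparable extension $K/K''$, of which there are none. Therefore $|\Aut(K/K')| \le [K'':K'] = |G(k)|$, and the chain of inclusions collapses to $\Aut_{D/D'}(k) = G(k)$.

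The inclusion $\Aut_{\varphi}(k) \subseteq N_{\Aut(D)}(G(k))$ is then formal and uses the first equality: if $g$ descends, say $\varphi \circ g = h \circ \varphi$, then for $\gamma \in \Aut_{D/D'}(k) = G(k)$ one computes $\varphi \circ (g\gamma g^{-1}) = h \circ \varphi \circ \gamma \circ g^{-1} = h \circ \varphi \circ g^{-1} = \varphi$, so $g\gamma g^{-1} \in \Aut_{D/D'}(k) = G(k)$, and symmetrically for $g^{-1}$.

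The reverse inclusion is the heart of the matter and the main obstacle: an element $g$ normalizing only the finite group $G(k)$ need not normalize the group scheme $G$, and indeed the scheme-level statement genuinely fails (Example \ref{ex: normalizer}). The remedy is to recover $K'$ canonically from $G(k)$ alone. Since $g$ normalizes $G(k)$, the automorphism $g^{*}$ preserves $L := K^{G(k)}$, as the fixed field of a conjugated subgroup is the conjugate of the fixed field. The extension $K/L$ is Galois of degree $|G(k)|$, whence $[L:K'] = |G^{\circ}| = p^{m}$; moreover $L/K'$ is purely inseparable, because any element of $L$ separable over $K'$ lies in $K'' \cap L = (K'')^{G(k)} = K'$. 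Since $L$ is a function field of transcendence degree $1$ over the perfect field $k$, one has $[L:L^{p^{m}}] = p^{m}$, and combining this with $L^{p^{m}} \subseteq K' \subseteq L$ and $[L:K'] = p^{m}$ forces the canonical description $K' = L^{p^{m}}$. As $g^{*}$ preserves $L$ and commutes with $p^{m}$-th powers, $g^{*}(K') = (g^{*}L)^{p^{m}} = L^{p^{m}} = K'$, so $g$ descends along $\varphi$; here smoothness of $D'$ is exactly what promotes the induced $k$-automorphism of $K'$ to an automorphism of $D'$. This closes the loop and proves $N_{\Aut(D)}(G(k)) \subseteq \Aut_{\varphi}(k)$.
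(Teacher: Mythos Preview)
Your proof is correct and follows essentially the same route as the paper: reduce to the function fields $K=k(D)$ and $K'=k(D')$, introduce $L=K^{G(k)}$, and use that for function fields of curves over a perfect field the purely inseparable extension $L/K'$ forces $K'=L^{p^{m}}$, so that preserving (resp.\ fixing) $L$ is equivalent to preserving (resp.\ fixing) $K'$. The only cosmetic differences are that the paper works solely with the tower $K\supseteq L\supseteq K'$ and obtains both inclusions of the second equality at once by identifying each side with $\{\sigma\in\Aut_k(K):\sigma(L)=L\}\cap\Aut(D)$, whereas you additionally introduce $K''=K^{G^{\circ}}$ to bound $|\Aut(K/K')|$ and to verify that $L/K'$ is purely inseparable, and you split the second equality into two inclusions.
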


\begin{proof}
We can consider the four groups as subgroups of the group $\Aut_k(k(D))$ of $k$-linear field automorphisms of $k(D)$ via the injective restriction map $\Aut(D) \hookrightarrow \Aut_k(k(D))$. We have a tower of field extensions $k(D') \subseteq k(D)^{G(k)} \subseteq k(D)$, where $k(D') \subseteq k(D)^{G(k)}$ is purely inseparable and  $k(D)^{G(k)} \subseteq k(D)$ is a Galois extension with Galois group $G(k)$. An elementary calculation shows that $N_{\Aut_k(k(D))}(G(k))$ is the subgroup of $\Aut_k(k(D))$ of automorphisms preserving $k(D)^{G(k)}$. Since $D'$ is a curve, we have $k(D') = (k(D)^{G(k)})^{p^n}$ for some $n \geq 0$, so an automorphism of $k(D)$ preserves $k(D)^{G(k)}$ if and only if it preserves $k(D')$. Hence, $N_{\Aut_k(k(D))}(G(k))$ is also the group of automorphisms of $k(D)$ preserving $k(D')$. On the other hand, since $D'$ is smooth and proper, $\Aut_{\varphi}(k)$ consists precisely of those automorphisms of $D$ which, when restricted to $k(D)$, preserve $k(D')$. Hence, we have
$
N_{\Aut(D)}(G(k)) = N_{\Aut_k(k(D))}(G(k)) \cap \Aut(D) = \Aut_{\varphi}(k),
$
and $\Aut_{D/D'}(k) = \Aut_{k(D')}(k(D)) \cap \Aut(D) = G(k)$, which is what we had to show.
\end{proof}

\section{Proof of Theorem \ref{Main}}
\label{sec: main}
Throughout this section, $E$ and $C$ are integral curves of arithmetic genus $1$, and we assume that $E$ is smooth and $C$ is either smooth or has a single cusp as singularity. We choose a point $O \in E$ and consider $E$ as an elliptic curve with identity element $O$. We fix a finite subgroup scheme $G \subseteq E$, and a monomorphism $\alpha: G \to \Aut_C$ such that $\alpha(G)$ is not contained in the group of translations of $C$ if $C$ is smooth, and not contained in the stabilizer of the cusp if $C$ is singular.
In particular, the actions of $G$ on $E$ (via translations) and $C$ (via $\alpha$) give rise to a product action of $G$ on $E \times C$ and we set $X := (E \times C)/G$ with quotient map $\pi: E \times C \to X$.
We have the following commutative diagram with two cartesian squares:
\vspace{0.2cm}
\begin{equation*}
\xymatrix{
E \times C \ar[rd]^\pi \ar[d]^{\pi_C} \ar[r]^{\pi_E} & X \times_{E'} E \ar[r] \ar[d] \ar@{}[rd] | {\square} & E \ar[d] \\
X \times_{C'} C \ar[r] \ar[d] \ar@{}[rd] | {\square}& X \ar[r]_{f_E} \ar[d]^{f_C} & E/G =: E' \\
C \ar[r] & C/\alpha(G) =: C'.
}
\end{equation*}
\vspace{0.2cm}Since $G$ acts freely on $E$, the map $\pi_E$ induces isomorphisms on the fibers of $E \times C \to E$ and $X \times_{E'} E \to E$ and thus, as both maps are flat, the morphism $\pi_E$ is an isomorphism. The following lemma shows that the automorphism group scheme of $X$ is controlled by the fibrations $f_E$ and $f_C$.

\begin{Lemma} \label{lem: equivariant}
There is a unique action of $\Aut_X$ on $C'$ and on $E'$ such that both $f_E: X \to E'$ and $f_C: X \to C'$ are $\Aut_X$-equivariant. In particular, there are exact sequences
$$
1 \to \Aut_{X/E'} \to \Aut_X \overset{(f_E)_*}{\to} \Aut_{E'}
$$
and
$$
1 \to \Aut_{X/C'} \to \Aut_X \overset{(f_C)_*}{\to} \Aut_{C'}.
$$
\end{Lemma}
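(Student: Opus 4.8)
The plan is to recognize the two asserted exact sequences as instances of the general Sequence \eqref{eq: normalizer} for the faithfully flat morphisms $\pi = f_E$ and $\pi = f_C$ (both are dominant morphisms from the smooth surface $X$ to a smooth curve, hence flat and surjective). For such a $\pi$, the framework recalled in Section \ref{sec: notation} already tells us that the first projection $\Aut_\pi \to \Aut_X$ is a closed immersion, and this encodes exactly the uniqueness of the sought action: a $T$-automorphism of $X$ admits at most one descent to $E'$ (resp.\ $C'$), because $f_E$ (resp.\ $f_C$) is faithfully flat, hence schematically dominant. Consequently it suffices to show that each of the closed immersions $\Aut_{f_E} \to \Aut_X$ and $\Aut_{f_C} \to \Aut_X$ is surjective, i.e.\ an isomorphism; the action is then $\mathrm{pr}_2 \circ \mathrm{pr}_1^{-1}$, and the two displayed sequences are precisely Sequence \eqref{eq: normalizer} for $f_E$ and for $f_C$. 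So the whole content is the existence, for every $T$ and every $g \in \Aut_X(T)$, of automorphisms of $E'_T$ and $C'_T$ over $T$ making $(f_E)_T$ and $(f_C)_T$ equivariant.

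I would first establish existence on geometric points by a numerical argument. The group $\Num(X)$ of a (quasi-)bielliptic surface has rank two, and the fibre classes $F_E = [C]$ and $F_C$ span it, with $F_E^2 = F_C^2 = 0$ and $F_E \cdot F_C > 0$; thus $\Num(X)$ is a hyperbolic plane whose only two effective isotropic rays are $R_E = \RR_{\geq 0} F_E$ and $R_C = \RR_{\geq 0} F_C$. Any $g \in \Aut_X(\bar k)$ acts on $\Num(X)$ preserving the intersection form and effectivity, so it permutes the set $\{R_E, R_C\}$. Since two fibrations whose general fibres are numerically orthogonal must have the same fibres and hence agree up to an automorphism of the base, each isotropic ray determines its fibration; therefore $g$ carries the unordered pair $\{f_E, f_C\}$ to itself. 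A transposition would identify the two bases, forcing an isomorphism $E' \cong C' \cong \PP^1$, which is impossible because $E'$ is an elliptic curve. Hence $g$ preserves each fibration separately, and, as $f_E$ and $f_C$ have connected fibres with $(f_E)_\ast \OO_X = \OO_{E'}$ and $(f_C)_\ast \OO_X = \OO_{C'}$, it descends to the required unique automorphisms $g'_E \in \Aut(E')$ and $g'_C \in \Aut(C')$.

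The hard part is to upgrade this pointwise statement to arbitrary, possibly non-reduced, test schemes $T$, so that the descents assemble into morphisms of group schemes. Here I would exploit that $\Aut_X^\circ$ is connected and therefore acts trivially on $\Num(X)$, so it automatically preserves both fibrations and accounts for all infinitesimal directions, while on the \'etale component group the geometric-point analysis above applies verbatim. The remaining point is that ``preserving a fibration'' is stable under the base change $X_T \to X$: since the formation of $f_E$ (the Albanese map), of $f_C$, and of their Stein factorizations commutes with base change, an automorphism of $X_T$ over $T$ that is fibrewise compatible with $f_E$ (resp.\ $f_C$) descends over all of $T$, and these descents are functorial in $T$. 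This yields the surjectivity of $\Aut_{f_E} \to \Aut_X$ and $\Aut_{f_C} \to \Aut_X$, hence the isomorphisms and the two exact sequences. The technical heart, and the step I expect to require the most care, is exactly this base-change stability, ensuring that a construction transparent on $\bar k$-points genuinely defines morphisms of the group schemes in question.
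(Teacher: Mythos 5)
Your skeleton is in fact the same as the paper's: treat the identity component and the component group (equivalently, the $k$-points) separately, prove descent for each, and obtain the exact sequences as instances of Sequence \eqref{eq: normalizer}. Your uniqueness argument via schematic dominance is fine, and your $\Num(X)$-argument on geometric points is a correct, slightly more self-contained version of the paper's one-line observation that $f_E$ and $f_C$ are the only fibrations and $E' \not\cong C' \cong \PP^1$. The problem is the step you yourself single out as the technical heart: there you do not give a proof, and the two justifications you offer for it do not hold up.

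Concretely: (i) ``$\Aut_X^\circ$ is connected, hence acts trivially on $\Num(X)$, hence accounts for all infinitesimal directions'' is not an argument. Triviality on $\Num(X)$ is a statement about $\bar k$-points; it says nothing about $T$-points of $\Aut_X^\circ$ for non-reduced $T$, and for these surfaces $\Aut_X^\circ$ is genuinely non-reduced in several quasi-bielliptic cases (e.g.\ $\Aut_{X/C'}$ contains a copy of $\alpha_3^2$ when $p=3$ and $G = \alpha_3$, cf.\ Remark \ref{rem: mistake} and Section \ref{quellchar3}), so there are infinitesimal automorphisms invisible to $\Num(X)$. (ii) The base-change principle you invoke --- ``the formation of Stein factorizations commutes with base change'' --- is false in general in characteristic $p$ (it is a cohomological flatness statement, and $f_C$ has multiple fibers); what actually saves the situation here is that the relevant base change $C'_T = C'\times_k T \to C'$ is \emph{flat}, so $(f_{C,T})_*\OO_{X_T} = \OO_{C'_T}$ follows from flat base change --- a point your sketch does not make. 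Even granting this, one still has to pass from ``$f_T \circ g$ is constant on geometric fibers of $f_T$'' to an actual scheme-theoretic factorization through $f_T$ over a non-reduced $T$; this requires a rigidity-type argument (e.g.\ showing that the scheme-theoretic image of $(f_T, f_T\circ g)\colon X_T \to C'_T \times_T C'_T$ is finite over $C'_T$ and then an isomorphism, using $(f_T)_*\OO_{X_T}=\OO_{C'_T}$). That rigidity statement is precisely the content of Blanchard's Lemma \cite[Proposition 4.2.1]{Brion}, which the paper cites for the $\Aut_X^\circ$-part, combined with \cite[Lemma 2.20 (ii)]{BrionNotes} to merge the connected part with the $k$-points. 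Your proposal needs to either cite these results or reprove them; as written, the central descent step is asserted rather than proved.
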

\begin{proof}
The $\Aut_X^\circ$-action on $X$ descends to both $E'$ and $C'$ by Blanchard's Lemma \cite[Proposition 4.2.1]{Brion}. Since $f_E$ and $f_C$ are the only fibrations of $X$ and $E' \not \cong C' \cong \mathbb{P}^1$, it is also clear that the action of the abstract group $\Aut(X)$ descends to $E'$ and $C'$. By \cite[Lemma 2.20 (ii)]{BrionNotes}, this is enough to prove that the whole $\Aut_X$-action descends uniquely to the two curves $E'$ and $C'$.

With respect to the $\Aut_X$-actions of the previous paragraph, we have $\Aut_X = \Aut_{f_C} = \Aut_{f_E}$, hence the short exact sequences in the statement of the lemma are special cases of Sequence \eqref{eq: normalizer}.
\end{proof}

The idea for the proof of Theorem \ref{Main} is to use the isomorphism $\pi_E$ to lift group scheme actions from $X$ to $E \times C$. By Proposition \ref{prop: normalizer}, the automorphisms of $X$ that come from $E \times C$ are induced by the normalizer $N_{\Aut_{E \times C}}(G)$. Therefore, before proving Theorem \ref{Main}, we study $N_{\Aut_{E \times C}}(G)$. 
For the following lemma, note that there is a natural inclusion $\Aut_E \times \Aut_C \hookrightarrow \Aut_{E \times C}$ given by letting $\Aut_E$ and $\Aut_C$ act on the first and second factor, respectively. In particular, we can consider $C_{\Aut_E}(G) \times C_{\Aut_C}(\alpha(G))$ and $N_{\Aut_E}(G) \times N_{\Aut_C}(\alpha(G))$ as subgroup schemes of $\Aut_{E \times C}$.

\begin{Lemma}\label{lem: nonconnecteddiagonal}
The normalizer $N_{\Aut_{E \times C}}(G)$ of $G$ in $\Aut_{E \times C}$ satisfies the following properties:
\begin{enumerate}
\item \label{item: nonconnecteddiagonal1} $N_{\Aut_{E \times C}}(G) \supseteq C_{\Aut_E}(G) \times C_{\Aut_C}(\alpha(G))$.
\item \label{item: nonconnecteddiagonal2}  $N_{\Aut_{E \times C}}^{\circ}(G) = C_{\Aut_E}^\circ(G) \times C_{\Aut_C}^\circ(\alpha(G))$.
\item \label{item: nonconnecteddiagonal3} \mbox{$N_{\Aut_{E \times C}}(G)(T) = \{ (h_E,h_C) \in N_{\Aut_E}(G)(T) \times N_{\Aut_C}(\alpha(G))(T) \mid \alpha_T \circ {\rm ad}_{h_E} = {\rm ad}_{h_C} \circ \alpha_T \},$} where ${\rm ad}_{h_E}$ and ${\rm ad}_{h_C}$ denote conjugation by $h_E$ and $h_C$, respectively.
\item \label{item: nonconnecteddiagonal4} The quotient maps $C \to C'$ and $E \to E'$ are $N_{\Aut_{E \times C}}(G)$-equivariant.
\end{enumerate}
\end{Lemma}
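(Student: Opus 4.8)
The plan is to prove part \eqref{item: nonconnecteddiagonal3} first, since parts \eqref{item: nonconnecteddiagonal1}, \eqref{item: nonconnecteddiagonal2} and \eqref{item: nonconnecteddiagonal4} all follow from it. The heart of \eqref{item: nonconnecteddiagonal3} is the claim that \emph{every} automorphism of $E \times C$ normalizing $G$ is a product automorphism, i.e. $N_{\Aut_{E\times C}}(G) \subseteq \Aut_E \times \Aut_C$; once this is established, the precise description of the normalizer and the compatibility condition $\alpha \circ \mathrm{ad}_{h_E} = \mathrm{ad}_{h_C}\circ\alpha$ drop out of a direct conjugation computation. I would prove the product structure in two cases, according to whether $C$ is smooth or cuspidal, and in both the decisive input is the standing hypothesis that $\alpha(G)$ is not contained in the translations of $C$ (resp. does not fix the cusp): this is exactly what rigidifies $E \times C$ enough to forbid ``mixing'' automorphisms.

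\emph{The smooth case.} Here $A := E \times C$ is an abelian surface, so every automorphism is of the form $x \mapsto L(x) + v$ with $L$ a group automorphism and $v \in A$. Writing $\alpha(g)(c) = \rho(g)(c) + \beta(g)$ with $\rho(g) \in \Aut(C,O_C)$, the action of $g \in G$ has linear part the block-diagonal map $(\mathrm{id}_E, \rho(g))$, and the hypothesis on $\alpha(G)$ provides a point $g_0 \in G(k)$ with $\rho(g_0) \neq \mathrm{id}$. If $h = t_v \circ L$ normalizes $G$, then $L$ must normalize the subgroup $\{(\mathrm{id}_E,\rho(g))\}$, i.e. $L(\mathrm{id}_E,\rho(g))L^{-1} = (\mathrm{id}_E,\rho(g'))$ for a suitable $g'$. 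Viewing $L$ as a matrix of homomorphisms between the two elliptic curves, the off-diagonal blocks $b \in \Hom(C,E)$ and $c \in \Hom(E,C)$ then satisfy $b\circ(\rho(g_0)-1) = 0$ and $(\rho(g')-1)\circ c = 0$ for some $g'$ with $\rho(g')\neq\mathrm{id}$; since a nonzero endomorphism of an elliptic curve is a surjective isogeny, this forces $b = 0$ and $c = 0$. Hence $L$, and therefore $h$, is a product automorphism.

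\emph{The cuspidal case.} Now the cuspidal cubic $C$ has trivial Albanese, so $\mathrm{Alb}(E\times C) = E$ and the Albanese morphism is the first projection up to a translation. By functoriality, any $h$ satisfies $(p_E)\circ h = h_E \circ (p_E)$ for some $h_E \in \Aut_E$, so $h$ preserves the fibers of $p_E$ and has the shape $h(e,c) = (h_E(e), \psi_e(c))$ with $\psi_e \in \Aut_C$. The assignment $e \mapsto \psi_e \circ \psi_{O}^{-1}$ is a morphism from the proper connected variety $E$ to the affine group scheme $\Aut_C$ (affine because every automorphism of $C$ fixes the cusp), hence is constant; so $\psi_e$ is independent of $e$ and $h = (h_E,h_C)$ is a product. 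With the containment $N_{\Aut_{E\times C}}(G) \subseteq \Aut_E \times \Aut_C$ in hand, I finish \eqref{item: nonconnecteddiagonal3} by conjugating a general element $(t_g,\alpha(g))$ of $G$ by $(h_E,h_C)$: the result lies in $G$ for all $g$ exactly when $h_E \in N_{\Aut_E}(G)$, $h_C \in N_{\Aut_C}(\alpha(G))$, and the induced automorphisms of $G$ agree under $\alpha$. (All of this is carried out on $T$-valued points; the smooth case is functorial through the theory of abelian schemes, and in the cuspidal case one uses in addition that $H^0(E\times C, T_{E\times C}) = H^0(E,T_E) \oplus H^0(C,T_C)$, leaving no room for infinitesimal mixing.)

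The remaining parts follow formally. For \eqref{item: nonconnecteddiagonal1}, specialize \eqref{item: nonconnecteddiagonal3} to $h_E \in C_{\Aut_E}(G)$ and $h_C \in C_{\Aut_C}(\alpha(G))$: then $\mathrm{ad}_{h_E}$ and $\mathrm{ad}_{h_C}$ are trivial and the compatibility condition holds automatically. For \eqref{item: nonconnecteddiagonal2}, observe that in each factor $N/C$ embeds into the finite étale group scheme of automorphisms of $G$ (resp. $\alpha(G)$), so $N_{\Aut_E}^\circ(G) = C_{\Aut_E}^\circ(G)$ and likewise for $C$; intersecting the inclusion of \eqref{item: nonconnecteddiagonal3} with identity components and combining with \eqref{item: nonconnecteddiagonal1} gives the stated equality. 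For \eqref{item: nonconnecteddiagonal4}, since $h_E$ and $h_C$ normalize $G$ and $\alpha(G)$, the compositions $(E\to E')\circ h_E$ and $(C\to C')\circ h_C$ are invariant and hence descend through the categorical quotients $E \to E'$ and $C \to C'$. The step I expect to be the main obstacle is the smooth case when $E$ and $C$ are isogenous, where the group of automorphisms of the abelian surface genuinely contains factor-mixing elements; excluding these from the normalizer is precisely where the hypothesis on $\alpha(G)$ is indispensable.
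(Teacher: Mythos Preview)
Your overall strategy---prove \eqref{item: nonconnecteddiagonal3} directly by case analysis on $E\times C$, then deduce the rest---is sound and genuinely different from the paper's. The paper proves \eqref{item: nonconnecteddiagonal1} and \eqref{item: nonconnecteddiagonal2} first (invoking Brion's $\Aut_{Y_1\times Y_2}^\circ=\Aut_{Y_1}^\circ\times\Aut_{Y_2}^\circ$), uses \eqref{item: nonconnecteddiagonal2} to reduce \eqref{item: nonconnecteddiagonal3} to $k$-points, and then argues by \emph{descending} a normalizing automorphism to $X$ via Proposition~\ref{prop: normalizer}: since $f_E,f_C$ are the only fibrations of $X$ and $E'\not\cong C'$, the induced automorphism of $X$ preserves both, and Stein factorization forces the lift to preserve the two projections of $E\times C$. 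Your approach stays upstairs and avoids this descent; the smooth case is a clean use of the isogeny structure of an abelian surface, and in the cuspidal case you in fact prove the stronger statement $\Aut_{E\times C}=\Aut_E\times\Aut_C$. (The Albanese of the singular variety $E\times C$ deserves a word of justification; passing to the normalization $E\times\mathbb{P}^1$ is the cleanest way to make that step rigorous.)

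There is, however, a genuine error in your deduction of \eqref{item: nonconnecteddiagonal2}. The automorphism group scheme of a finite commutative group scheme is \emph{not} finite \'etale in general: already $\Aut_{\alpha_p}\cong\GG_m$. Consequently your claimed equality $N_{\Aut_C}^\circ(\alpha(G))=C_{\Aut_C}^\circ(\alpha(G))$ fails in the quasi-bielliptic setting; e.g.\ for $p=3$, $\alpha(G)=\alpha_3$, the paper's later computations give $N_{\Aut_C}^\circ(\alpha_3)\cong\alpha_3^2\rtimes\GG_m$ but $C_{\Aut_C}^\circ(\alpha_3)\cong\alpha_3^2$. The fix is not to discard the compatibility condition in \eqref{item: nonconnecteddiagonal3} when passing to identity components: the projection of $N_{\Aut_{E\times C}}^\circ(G)$ to $\Aut_E$ is connected, hence lands in $\Aut_E^\circ=E$, which centralizes $G$ since $E$ is commutative and $G\subseteq E$; then $\alpha\circ\mathrm{ad}_{h_E}=\mathrm{ad}_{h_C}\circ\alpha$ with $\mathrm{ad}_{h_E}=\mathrm{id}$ forces $h_C\in C_{\Aut_C}(\alpha(G))$, and connectedness places it in $C_{\Aut_C}^\circ(\alpha(G))$. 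This is essentially the paper's argument for \eqref{item: nonconnecteddiagonal2}; the point is that the asymmetry $G\subseteq\Aut_E^\circ$ is what drives \eqref{item: nonconnecteddiagonal2}, not any finiteness of $\Aut_G$.
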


\proof
Claim \eqref{item: nonconnecteddiagonal1} is clear.

For Claim \eqref{item: nonconnecteddiagonal2}, the inclusion $N_{\Aut_{E \times C}}^\circ(G) \supseteq C_{\Aut_E}^\circ(G) \times C_{\Aut_C}^\circ(\alpha(G))$ follows from Claim \eqref{item: nonconnecteddiagonal1} and we have to show the other inclusion.
By \cite[Corollary 4.2.7]{Brion}, we have $\Aut_{E \times C}^\circ = \Aut_E^\circ \times \Aut_C^\circ$. In particular, being connected, $N_{\Aut_{E \times C}}^\circ(G)$ is contained in $\Aut^\circ_{E} \times \Aut^\circ_C$. 
Hence, it suffices to show that $N_{\Aut_{E \times C}}^\circ(G)$ centralizes $G$ on the first factor of $E \times C$. Since $G \subseteq \Aut_E^\circ$ is a subgroup scheme of the connected commutative group scheme $\Aut_E^\circ$, we have $\Aut_E^\circ \subseteq C^\circ_{\Aut_E}(G) \subseteq N^\circ_{\Aut_E}(G) \subseteq \Aut_E^\circ$, so $N_{\Aut_{E}}^\circ(G)$ centralizes $G$. Therefore, $N_{\Aut_{E \times C}}^\circ(G)$ centralizes $G$ as well.

Claim \eqref{item: nonconnecteddiagonal3} holds for $N_{\Aut_{E \times C}}^{\circ}(G)$ by Claim \eqref{item: nonconnecteddiagonal2}, so it suffices to prove the statement for $T = \Spec k$. Let $h \in N_{\Aut_{E \times C}}(G)(k)$. Since $h$ normalizes $G$, it descends to $X$ by Proposition \ref{prop: normalizer}. The induced automorphism of $X$ preserves both $f_C$ and $f_E$, because they are the only fibrations of $X$ and $E'$ has genus $1$, while $C' \cong \mathbb{P}^1$.
 Since the projections $E \times C \to E$ and $E \times C \to C$ coincide with the Stein factorizations of $f_E \circ \pi$ and $f_C \circ \pi$, respectively, both projections are preserved by $h$. Hence, $h \in \Aut(E) \times \Aut(C)$. An automorphism of this form normalizes the $G$-action on $E \times C$ if and only if it normalizes the $G$-action on both factors and the automorphisms of $G$ induced by the two conjugations are identified via $\alpha$. This proves Claim \eqref{item: nonconnecteddiagonal3}.

Claim \eqref{item: nonconnecteddiagonal4} follows from the $N_{\Aut_{E \times C}}(G)$-equivariance of $\pi,f_E,$ and $f_C$, since the two projections $E \times C \to E$ and $E \times C \to C$ are faithfully flat.
\qed \medskip

Recall that, by Proposition \ref{prop: normalizer}, the action of $N_{\Aut_{E \times C}}(G)$ on $E \times C$ descends to $X$ and we denote the corresponding homomorphism by $\pi_*: N_{\Aut_{E \times C}}(G) \to \Aut_X$. 
After these preparations, we are ready to prove the following refined version of Theorem \ref{Main}.

\begin{Theorem}[\emph{cf.} Theorem \ref{Main}] \label{Mainwithdetails}
Let $X = (E \times C)/G$ be a bielliptic or quasi-bielliptic surface. Then:
\begin{enumerate}
\item \label{item: Main1} $\Aut_{X/C'} = \pi_*(C_{\Aut_E}(G) \times (C_{\Aut_C}(\alpha(G)) \cap \Aut_{C/C'}))$.
\item \label{item: Main2} If $G$ is \'etale, then $\Aut_{X/C'} \cong C_{\Aut_E}(G)$.
\item \label{item: Main3}  $\Aut_{X/E'} = \pi_*((C_{\Aut_E}(G) \cap \Aut_{E/E'}) \times C_{\Aut_C}(\alpha(G)))$.
\item \label{item: Main4} $\Aut_{X/E'} \cong C_{\Aut_C}(\alpha(G))$.
\item \label{item: Main5} There is a short exact sequence of group schemes
$$
1 \to (C_{\Aut_E}(G) \times C_{\Aut_C}(\alpha(G)))/G \overset{\pi_*}{\to} \Aut_X \to M \to 1,
$$
where $G$ is embedded via ${\rm id} \times \alpha$, $M$ is finite and \'etale, and $M(k)$ is a subquotient of the groups $\Aut_{E'}(k)/((f_E)_* C_{\Aut_E}(G)(k))$ and $N_{\Aut(C)}(\alpha(G)(k))/(C_{\Aut_C}(\alpha(G))(k))$.
\item \label{item: Main6} If every element of $M(k)$ can be represented by an automorphism of $X$ that lifts to $E \times C$, then
$$
M(k) \cong \frac{\{(h_E,h_C) \in N_{\Aut_E}(G) \times N_{\Aut_C}(\alpha(G)) \mid \alpha \circ {\rm ad}_{h_E} = {\rm ad}_{h_C} \circ \alpha \}}{ C_{\Aut_E}(G)(k) \times C_{\Aut_C}(\alpha(G))(k)}.
$$
This always holds if $X$ is bielliptic.
\end{enumerate}
\end{Theorem}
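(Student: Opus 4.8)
The plan is to control $\Aut_X$ entirely through the normalizer $N:=N_{\Aut_{E\times C}}(G)$ and the descent homomorphism $\pi_*\colon N\to\Aut_X$ of Proposition \ref{prop: normalizer}, exploiting the isomorphism $\pi_E\colon E\times C\xrightarrow{\sim}X\times_{E'}E$. Write $q\colon E\to E'$ and $\varphi\colon C\to C'$ for the quotient maps. The first step I would isolate is a \emph{lifting criterion}: an $h\in\Aut_X(T)$ lies in $\pi_*(N(T))$ if and only if the induced automorphism $h'$ of $E'$ (Lemma \ref{lem: equivariant}) lifts along $q$, i.e.\ there is $h_E\in\Aut_{E_T}$ with $q\circ h_E=h'\circ q$. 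Indeed, given such $h_E$, the pair $(h,h_E)$ defines an automorphism of $X\times_{E'}E$, which $\pi_E$ transports to $\tilde h\in\Aut_{E\times C}(T)$ satisfying $\pi\circ\tilde h=h\circ\pi$, so $\tilde h\in N(T)$; the converse uses equivariance (Lemma \ref{lem: nonconnecteddiagonal}\eqref{item: nonconnecteddiagonal4}). By Lemma \ref{lem: nonconnecteddiagonal}\eqref{item: nonconnecteddiagonal3}, every lift is a product $(h_E,h_C)$ subject to the compatibility $\alpha\circ\mathrm{ad}_{h_E}=\mathrm{ad}_{h_C}\circ\alpha$.

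For \eqref{item: Main3} and \eqref{item: Main4} I would lift over $E'$ by base change: for $h\in\Aut_{X/E'}(T)$ the automorphism $h\times_{E'}\id_E$ transports through $\pi_E$ to a $\tilde h$ \emph{over the first projection} $E\times C\to E$, forcing $\tilde h=(\id,h_C)$; the compatibility then reads $\alpha=\mathrm{ad}_{h_C}\circ\alpha$, i.e.\ $h_C\in C_{\Aut_C}(\alpha(G))$. Thus $\Aut_{X/E'}=\pi_*(\{\id\}\times C_{\Aut_C}(\alpha(G)))$, and since $\ker\pi_*=G$ meets this factor trivially, restriction of $\pi_*$ is an isomorphism onto $\Aut_{X/E'}$, giving \eqref{item: Main4}. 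Using $\Aut_{E/E'}=G$ (Proposition \ref{prop: normalizer}) and that $G$ is central in $C_{\Aut_E}(G)$, one has $C_{\Aut_E}(G)\cap\Aut_{E/E'}=G$ and $\pi_*(G\times C_{\Aut_C}(\alpha(G)))=\pi_*(\{\id\}\times C_{\Aut_C}(\alpha(G)))$, which rewrites the answer as \eqref{item: Main3}.

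For \eqref{item: Main1} and \eqref{item: Main2} the key is that automorphisms over $C'$ always lift: an $h\in\Aut_{X/C'}$ preserves each fibre of $f_C$, and restricting $h$ to the reduced general fibre, which is isomorphic to $E$ with $f_E$ restricting to $q$, produces a lift $h_E$ of $h'$ along $q$, so the criterion applies. Equivariance forces $h_C\in\Aut_{C/C'}$, and here I would prove the ingredient that, since $\alpha(G)$ is commutative and $\varphi$ is generically an $\alpha(G)$-torsor, one has $\Aut_{C/C'}\subseteq C_{\Aut_C}(\alpha(G))$; the compatibility then forces $h_E\in C_{\Aut_E}(G)$, yielding \eqref{item: Main1}. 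For \eqref{item: Main2}, if $G$ is \'etale then $\varphi$ is separable, so $T_{C/C'}=0$ makes $\Aut_{C/C'}$ \'etale and equal to $\alpha(G)$; hence $C_{\Aut_C}(\alpha(G))\cap\Aut_{C/C'}=\alpha(G)$ and quotienting $C_{\Aut_E}(G)\times\alpha(G)$ by the graph of $\alpha$ returns $C_{\Aut_E}(G)$.

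Finally, \eqref{item: Main1} and \eqref{item: Main4} give $\pi_*(C_{\Aut_E}(G)\times C_{\Aut_C}(\alpha(G)))=\Aut_{X/E'}\cdot\Aut_{X/C'}$, which is normal as the product of the kernels of $(f_E)_*$ and $(f_C)_*$ and has $\pi_*$-kernel $G$; this is the left-hand term of \eqref{item: Main5}, with $M$ its cokernel. I would show $M$ is \'etale by noting that $\Aut_X^\circ$ must fix the $\ge 3$ multiple fibres of $f_C$, hence act trivially on $C'\cong\mathbb{P}^1$, so $\Aut_X^\circ\subseteq\Aut_{X/C'}$ lands in the image; the subquotient bounds follow by pushing $M$ forward along $(f_E)_*$ (unconditionally) and along $(f_C)_*$ (restricting $h$ to the fibres of $f_E$, all isomorphic to $C$, and invoking Proposition \ref{prop: normalizerrationalpoints}), and these finite bounds make $M$ finite. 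For \eqref{item: Main6}, under the lifting hypothesis the surjection $N(k)\to M(k)$ has kernel exactly $C_{\Aut_E}(G)(k)\times C_{\Aut_C}(\alpha(G))(k)$ — any lift landing in $\pi_*(C_{\Aut_E}(G)\times C_{\Aut_C}(\alpha(G)))$ differs from such an element by $\ker\pi_*$, which lies in that product — and Lemma \ref{lem: nonconnecteddiagonal}\eqref{item: nonconnecteddiagonal3} identifies the numerator. The hypothesis holds for bielliptic $X$ because then $C$ is smooth, so the reduced general fibre of $f_C$ is a genuine elliptic curve $\cong E$ with $f_E$ restricting to $q$, and the fibre-restriction lift of $h'$ works for \emph{arbitrary} $h\in\Aut_X$. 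I expect this last lifting statement to be the main obstacle: for quasi-bielliptic surfaces the cuspidal geometry of the fibres of $f_C$ obstructs the clean fibre-restriction argument, so the lifting — and hence the precise shape of $M$ — must be verified separately, which is exactly why \eqref{item: Main6} is phrased conditionally.
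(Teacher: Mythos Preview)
Your overall architecture matches the paper's, and your lifting criterion via $\pi_E$ is exactly the right tool. However, there is one genuine gap and one soft spot.

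\textbf{The gap in \eqref{item: Main1}.} The claimed inclusion $\Aut_{C/C'}\subseteq C_{\Aut_C}(\alpha(G))$ is false in general, and the heuristic ``$\varphi$ is generically an $\alpha(G)$-torsor'' does not save it (compare Example~\ref{ex: normalizer}). Concretely, take $p=3$, $G=\alpha_3$, and $C$ the cuspidal cubic with $\alpha_3$ acting by $t\mapsto t+at^3$. Then $\varphi$ is the Frobenius $t\mapsto t^3$, so $\Aut_{C/C'}=\Aut_C[F]$ contains the $\mu_3$ given by $t\mapsto bt$, $b^3=1$. But the explicit calculation (Section~\ref{quellchar3}, Case~(d)) shows the centralizer of $\alpha_3$ requires $b^2=1$, so $\mu_3\not\subseteq C_{\Aut_C}(\alpha_3)$. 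What you actually need is the weaker assertion that any $(h_E,h_C)\in N_{\Aut_{E\times C}}(G)$ with $h_C\in\Aut_{C/C'}$ lies in the product of centralizers, and this \emph{is} true: the paper proves it by treating the identity component and $k$-points separately. For the identity component one uses Lemma~\ref{lem: nonconnecteddiagonal}\eqref{item: nonconnecteddiagonal2}, which gives $N^\circ=C^\circ_{\Aut_E}(G)\times C^\circ_{\Aut_C}(\alpha(G))$ directly; for $k$-points one invokes Proposition~\ref{prop: normalizerrationalpoints} to get $\Aut_{C/C'}(k)=\alpha(G)(k)$, which centralizes since $\alpha(G)$ is commutative. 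Your compatibility condition then forces $h_E$ into the centralizer as you say.

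\textbf{The soft spot in \eqref{item: Main5}.} The assertion that $f_C$ has at least three multiple fibres can fail: in characteristic $2$ with $G=\ZZ/2\ZZ$, the involution $[-1]$ on $C$ has fixed scheme $C[2]$, whose $k$-points number two (ordinary $C$) or one (supersingular $C$), so $f_C$ has only one or two multiple fibres and your argument does not force $\Aut_X^\circ$ to act trivially on $C'$. The paper instead shows that $M$ embeds into $\Aut_{E'}/((f_E)_*C_{\Aut_E}(G))$, and since $E'\subseteq (f_E)_*C_{\Aut_E}(G)$ this realizes $M$ as a subquotient of the finite \'etale group $\Aut_{E',O'}$.

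\textbf{A point in your favour.} Your fibre-restriction argument for \eqref{item: Main6} is cleaner than the paper's and, as written, does not use smoothness of $C$: choosing a general $c\in C$ with trivial $\alpha(G)$-stabilizer, the identification $E\cong f_C^{-1}(\varphi(c))$ via $e\mapsto\pi(e,c)$ intertwines $f_E$ with $q$, and transporting $h$ between two such fibres produces an honest $h_E$ lifting $h'$ along $q$; your lifting criterion then yields a lift of $h$ to $E\times C$ unconditionally. The paper instead lifts via $X\times_{C'}C$ and invokes that $E\times C$ is a smooth minimal non-ruled surface to extend a birational map, which is why it restricts the statement to the bielliptic case and defers the quasi-bielliptic case to the explicit computations in Section~\ref{sec: quasi-bielliptic}. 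If you make your fibre-restriction argument precise (in particular, noting that the choice of preimage $\tilde c$ of $h''(\varphi(c))$ only changes $h_E$ by an element of $G$), you obtain a uniform proof.
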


\begin{proof}
For Claim \eqref{item: Main1}, we first show that the $\Aut_{X/C'}$-action lifts to $E \times C$. For this, choose a general point $c \in C$ and let $c' \in C'$ be its image in $C'$, so that $\pi$ restricted to $E \times \{c\}$ yields an identification of $E$ with the fiber $F$ of $f_C$ over $c'$. Via this identification, the morphism $(f_E)|_{F}: F \to E'$ is identified with the quotient map $E \to E/G = E'$.
By Lemma \ref{lem: equivariant}, the action of $\Aut_{X/C'}$ on $X$ descends to an action on $E'$, and we can use the restriction homomorphism $\Aut_{X/C'} \to \Aut_F$ and the identification of $F$ with $E$ to get a compatible action of $\Aut_{X/C'}$ on $E$. Using the isomorphism $\pi_E: E \times C \to X \times_{E'} E$, we thus obtain an action of $\Aut_{X/C'}$ on $E \times C$ that lifts the action of $\Aut_{X/C'}$ on $X$. Hence, $\Aut_{X/C'}$ is in the image of $\pi_*$ and it remains to describe its preimage.

By Lemma \ref{lem: nonconnecteddiagonal} \eqref{item: nonconnecteddiagonal4}, a subgroup scheme $H \subseteq N_{\Aut_{E \times C}}(G) \subseteq \Aut_E \times \Aut_C$ maps to $\Aut_{X/C'}$ via $\pi_*$ if and only if it maps to $\Aut_{C/C'}$ under the second projection. To prove Claim \eqref{item: nonconnecteddiagonal1}, we have to show that such an $H$ in fact centralizes $G$. By Lemma \ref{lem: nonconnecteddiagonal} \eqref{item: nonconnecteddiagonal2} this holds for $H^\circ$, so we have to prove that $H(k)$ centralizes $\alpha(G)$. 
Observe that $H(k)$ is mapped to $\Aut_{C/C'}(k)$ under the second projection and $\Aut_{C/C'}(k) = \alpha(G)(k)$ by Proposition \ref{prop: normalizerrationalpoints}. This, and the fact that $G$ is abelian, implies that $H(k)$ centralizes $\alpha(G)$. Now, Lemma \ref{lem: nonconnecteddiagonal} \eqref{item: nonconnecteddiagonal3}, shows that $H(k)$ centralizes the $G$-action on $E \times C$.

For Claim \eqref{item: Main2}, it suffices to show that $C_{\Aut_C}(\alpha(G)) \cap \Aut_{C/C'} = \alpha(G)$, since there is an isomorphism $(C_{\Aut_E}(G) \times \alpha(G))/G \cong C_{\Aut_E}(G)$. This holds if $G$ is \'etale, for then $\Aut_{C/C'}$ is the constant group scheme associated to $\alpha(G)$ by Proposition \ref{prop: normalizerrationalpoints}. 

For Claim \eqref{item: Main3}, we only have to show that the $\Aut_{X/E'}$-action lifts to $E \times C$, because the description of the preimage of $\Aut_{X/E'}$ under $\pi_*$ works as in the proof of Claim \eqref{item: Main1}. Since $\Aut_{X/E'}$ acts trivially on $E'$, we can use the trivial action of $\Aut_{X/E'}$ on $E$ to define an action of $\Aut_{X/E'}$ on $X \times_{E'} E$ lifting the action of $\Aut_{X/E'}$ on $X$. Using the isomorphism $\pi_E: E \times C \to X \times_{E'} E$, we thus obtain the desired lifting.

For Claim \eqref{item: Main4}, we use that the $G$-action on $E$ is free. By Proposition \ref{prop: normalizer} this implies that $\Aut_{E/E'} = G$. Hence, Claim \eqref{item: Main3} shows that $\Aut_{X/E'} = \pi_*(G \times C_{\Aut_C}(\alpha(G)) \cong C_{\Aut_C}(\alpha(G))$. 
 
Next, let us prove Claim \eqref{item: Main5}. By Proposition \ref{prop: normalizer}, the image of $C_{\Aut_E}(G) \times C_{\Aut_C}(\alpha(G))$ under $\pi_*$ is isomorphic to $(C_{\Aut_E}(G) \times C_{\Aut_C}(\alpha(G)))/G$. By Claim \eqref{item: Main1} and Claim \eqref{item: Main3}, this image coincides with the subgroup scheme of $\Aut_X$ generated by the two normal subgroup schemes $\Aut_{X/C'}$ and $\Aut_{X/E'}$, hence it is itself normal. In particular, the quotient $M$ and the exact sequence in Claim \eqref{item: Main3} exist. It remains to describe~$M$.

First, consider the exact sequence
\begin{equation} \label{eq: sequenceinmainproof}
1 \to C_{\Aut_C}(\alpha(G)) \to \Aut_X \overset{(f_E)_*}{\to} \Aut_{E'}
\end{equation}
from Lemma \ref{lem: equivariant}, where we used Claim \eqref{item: Main4} to describe the kernel of $(f_E)_*$. 
The homomorphism $(f_E)_*$ identifies the group scheme $M$ with a subgroup scheme of $\Aut_{E'}/((f_E)_* C_{\Aut_E}(G))$. 
We can choose the image $O' \in E'$ of $O \in E$ as the neutral element of a group law on $E'$. Then, we have $\Aut_{E'} \cong E' \rtimes \Aut_{E',O'}$ for the finite and \'etale stabilizer $\Aut_{E',O'}$ of $O'$. 
Using the translation action, we can consider $E$ as a subgroup scheme of $\Aut_{E} \times \Aut_C$. In fact, we have $E \subseteq C_{\Aut_{E}}(G)$, since $G \subseteq E$ and $E$ is commutative. By Lemma \ref{lem: nonconnecteddiagonal} \eqref{item: nonconnecteddiagonal4}, the induced action on $E'$ coincides with the translation action of $E'$ on itself. Hence, $E' \subseteq (f_E)_* C_{\Aut_E}(G)$. In particular, $M$ is a subquotient of $\Aut_{E',O'}$ and hence it is finite and \'etale.

Let $H := (f_E)_*^{-1} (\Aut_{E',O'}) \subseteq \Aut_X$ and let $F$ be the fiber of $f_E$ over $O'$. Then, the restriction of $\pi$ to $\{O\} \times C$ gives an identification of $C$ with $F$ such that the quotient map $\varphi: C \to C/\alpha(G) = C'$ is identified with $(f_C)|_{F}: F \to C'$. In the following, we use this identification to write $C$ instead of $F$ and $\varphi$ instead of $(f_C)|_F$. Since $\Aut_{E',O'}$ fixes $O'$, the action of $H$ on $X$ preserves $C$ and the morphism $\varphi$ is $H$-equivariant, since $f_C$ is $\Aut_X$-equivariant by Lemma \ref{lem: equivariant}.
In other words, the $H$-action on $C$ factors through $\Aut_{\varphi}$. By Claim \eqref{item: Main1}, the kernel of this action is contained in $(C_{\Aut_E}(G) \times C_{\Aut_C}(\alpha(G)))/G$, hence $M$ is a subquotient of $\Aut_{\varphi}/C_{\Aut_C}(\alpha(G))$. Now, it suffices to observe that $\Aut_{\varphi}(k) = N_{\Aut(C)}(\alpha(G)(k))$, which follows from Proposition \ref{prop: normalizerrationalpoints}.

Finally, for Claim \eqref{item: Main6}, the description of $M$ follows immediately from Lemma \ref{lem: nonconnecteddiagonal} \eqref{item: nonconnecteddiagonal3} and Proposition \ref{prop: normalizer}. By the previous paragraph, we can lift every element of $M(k)$ to an automorphism $g \in \Aut(X)$ mapping under $(f_C)_*$ to the image of $\Aut_{\varphi}(k) \to \Aut_{C'}(k)$. In particular, $g$ lifts to an automorphism $h'$ of $X \times_{C'} C$. Since $\pi_C: E \times C \to E \times_{C'} C$ is birational, we obtain a birational automorphism $h$ of $E \times C$. Now, if $X$ is bielliptic, then $E \times C$ is smooth, minimal, and non-ruled hence $h$ extends to a biregular automorphism of $E \times C$ lifting $g$.
\end{proof}

\begin{Remark} \label{rem: usuallybigger}
We remark that if $G$ is not \'etale, then the group $N_{\Aut(C)}(\alpha(G)(k))$ will usually be bigger than $N_{\Aut_C}(\alpha(G))(k)$. Only later it will turn out that $M$ is in fact a subquotient of the smaller group $N_{\Aut_C}(\alpha(G))(k)/C_{\Aut_C}(\alpha(G))(k)$ in every case.
\end{Remark}

\begin{Remark} \label{rem: M}
In the case-by-case analysis of quasi-bielliptic surfaces in Section \ref{sec: quasi-bielliptic}, we will show that the assumptions of Theorem \ref{Mainwithdetails} \eqref{item: Main6} are also satisfied for all quasi-bielliptic surfaces, hence the description of $M$ also holds for these surfaces.
\end{Remark}

\begin{Remark} \label{rem: mistake}
It will follow from the calculations of Section \ref{sec: normalizercentralizer} that Theorem \ref{Mainwithdetails} \eqref{item: Main2} holds for all bielliptic surfaces. Indeed, the situation where $X$ is bielliptic and $G$ is not \'etale only occurs if $p = 2$ and $G = \mu_2 \times \ZZ/2\ZZ$ and in this case explicit calculations show that $C_{\Aut_C}(\alpha(G)) \cap \Aut_{C/C'} = \alpha(G)$, hence the existence of an isomorphism $\Aut_{X/C'} \cong C_{\Aut_E}(G)$ follows from Theorem \ref{Mainwithdetails} \eqref{item: Main1}.
In particular, for bielliptic surfaces, we always have $\Aut_{X/C'} \cap \Aut_{X/E'} = \pi_*(G \times \alpha(G)) \cong G$.

If $X$ is quasi-bielliptic, then it is not true in general that $\Aut_{X/C'} \cong C_{\Aut_E}(G)$. Indeed, for example if $p = 3$ and $G = \alpha_3$, then $C \to C'$ is purely inseparable of degree $3$, hence $\Aut_{C/C'} = \Aut_C[F]$. Calculations (see Section \ref{quellchar3}, Case (d)) show that $C_{\Aut_C}(\alpha_3)^\circ \cong \alpha_3^2$ and $C_{\Aut_E}(G) \cong E \rtimes \ZZ/3\ZZ$. Hence, by Theorem \ref{Mainwithdetails} \eqref{item: Main1}, $\Aut_{X/C'}$ is non-reduced while $C_{\Aut_E}(G)$ is reduced, so they cannot be isomorphic.
In particular, for quasi-bielliptic surfaces, $\Aut_{X/C'} \cap \Aut_{X/E'}$ can be larger than $G$.
\end{Remark}

We end this section with a description of $(\Aut_X^\circ)_{\red}$. We are thankful to the editors for sharing an observation that allowed us to avoid forward references to Section \ref{sec: normalizercentralizer} in the proof of the following proposition.
\begin{Corollary} \label{cor: autx0}
We have $E \cong (\Aut_X^\circ)_{\red}$ and $(\Aut_X^\circ)_{\red}$ is normal in $\Aut_X$.
\end{Corollary}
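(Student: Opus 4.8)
The plan is to extract the identity component from the exact sequence of Theorem \ref{Mainwithdetails} \eqref{item: Main5} and then cut it down to $E$ by exploiting the elliptic fibration $f_C$. First I would note that $E$, acting by translations on the first factor of $E \times C$, centralises $G$ --- since $E$ is commutative and $G \subseteq E$ --- and therefore descends to $\Aut_X$ by Proposition \ref{prop: normalizer}. This descended map $E \to \Aut_X$ is a closed immersion: a translation acting trivially on $X$ would have to lie in $G$ and act trivially on $C$, forcing it into $\ker \alpha = \{O\}$ because $\alpha$ is a monomorphism. As $E$ is smooth and connected, we get $E \subseteq (\Aut_X^\circ)_{\red} =: H$, and it remains to prove the reverse inclusion together with normality.

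For the reverse inclusion I would show $\dim H \le 1$. Since $M$ is finite and \'etale, $H$ is smooth and connected, and by Lemma \ref{lem: equivariant} it acts on both $E'$ and $C' \cong \PP^1$, preserving the two fibrations. The crucial point --- which is what lets us bypass the explicit computation of $C_{\Aut_C}(\alpha(G))$ in Section \ref{sec: normalizercentralizer} --- is that $f_C$ has at least three multiple fibres (their multiplicities being $(2,2,2,2)$, $(2,4,4)$, $(3,3,3)$ or $(2,3,6)$ in the bielliptic case, and similarly in the quasi-bielliptic case). The corresponding $\ge 3$ base points form an $H$-stable finite subset of $\PP^1$, which the connected group $H$ must fix pointwise; as the only element of ${\rm PGL}_2 = \Aut_{C'}$ fixing three points is the identity, $H$ acts trivially on $C'$, i.e. $H \subseteq \Aut_{X/C'}$.

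Next I would use $f_E$. The map $(f_E)_*$ sends $H$ onto $\Aut_{E'}^\circ = E'$, already because $E \subseteq H$ surjects onto $E' = E/G$; writing $H_1 := (\ker(H \to E'))_{\red}$, it suffices to see that $H_1$ is trivial. By construction $H_1 \subseteq \Aut_{X/C'} \cap \Aut_{X/E'}$. Restricting to the fibre of $f_E$ over $O'$, which is identified with $C$ in the proof of Theorem \ref{Mainwithdetails}, the faithful inclusion $\Aut_{X/E'} \cong C_{\Aut_C}(\alpha(G)) \hookrightarrow \Aut_C$ from Theorem \ref{Mainwithdetails} \eqref{item: Main4} carries $H_1$ into $\Aut_{C/C'}$. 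But $C \to C'$ is the finite quotient by $\alpha(G)$, so any smooth connected group acting on $C$ over $C'$ preserves every finite fibre and hence acts trivially; thus $(\Aut_{C/C'})^\circ_{\red} = \{1\}$ and $H_1$ is trivial. Therefore $\dim H = \dim E' = 1$, and as $E$ is a closed connected $1$-dimensional subgroup of the connected group $H$ we conclude $E = H = (\Aut_X^\circ)_{\red}$.

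For normality I would observe that $(\Aut_X^\circ)_{\red}$ is a characteristic subgroup scheme of the normal subgroup scheme $\Aut_X^\circ$: over the perfect field $k$ it is preserved by every automorphism of $\Aut_X^\circ$, in particular by the inner automorphisms of $\Aut_X$. Concretely, $N_{\Aut_X}(E)$ contains $\Aut_X(k)$ (conjugation by a $k$-point respects reducedness of $\Aut_X^\circ$), and since $E \subseteq \Aut_{X/C'}$ with $\Aut_{X/C'} \trianglelefteq \Aut_X$ a kernel, while translations are normal in $\Aut_{E'}$, each conjugate of $E$ is a connected subgroup of $\Aut_{X/C'} \cap (f_E)_*^{-1}(E')$, whose reduced identity component is $E$ again by the dimension count above; hence $E$ is normal. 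The step I expect to be the main obstacle is precisely the multiple-fibre argument bounding $\dim H$, since this is what replaces the case-by-case determination of the connected centralizer $C_{\Aut_C}^\circ(\alpha(G))$ and makes the proof independent of Section \ref{sec: normalizercentralizer}.
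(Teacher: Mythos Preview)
Your route to $E = (\Aut_X^\circ)_{\red}$ via the two fibrations is different from the paper's. The paper argues abstractly: since $X$ is not birationally ruled, \cite{Popov} forbids connected linear subgroups in $(\Aut_X^\circ)_{\red}$, so by Chevalley's theorem it is an abelian variety; then finiteness of stabilizers together with $X$ not being an abelian surface force $\dim \le 1$. Your argument works whenever $f_C$ really has $\ge 3$ multiple fibres --- e.g.\ for all bielliptic surfaces in characteristic $\neq 2,3$ --- but it breaks down in small characteristic. Take $p = 3$, $G = \alpha_3$ (quasi-bielliptic): the quotient $C \to C' = C/\alpha_3$ is the Frobenius $t \mapsto t^3$, the action $t \mapsto t + at^3$ has scheme-theoretic fixed locus $\{t=0\}$ only, and is free at the cusp by hypothesis, so $f_C$ has exactly \emph{one} multiple fibre and a single marked point on $\PP^1$ does not force a connected group to act trivially. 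The structure-theoretic bound is what makes the paper's proof uniform; this is precisely the obstacle you anticipated, and it is real.

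The normality argument has an independent gap. Your claim that $(\Aut_X^\circ)_{\red}$ is preserved by all inner automorphisms of $\Aut_X$ is false for group schemes in general: in $H = \alpha_p \rtimes \GG_m$ (with $\GG_m$ scaling $\alpha_p$) one has $(H^\circ)_{\red} = \GG_m$, yet conjugating $(0,t)$ by $(a,1) \in \alpha_p$ gives $(a(1-t),t) \notin \GG_m$. Your ``concrete'' paragraph only controls conjugation by $k$-points; knowing that conjugates of $E$ land in $K = \Aut_{X/C'} \cap (f_E)_*^{-1}(E')$ with $(K^\circ)_{\red} = E$ does not force $gE_Tg^{-1} \subseteq E_T$ for infinitesimal $g$, and $\Aut_X$ is genuinely non-reduced in several of the cases at hand. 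The paper instead identifies $(\Aut_X^\circ)_{\red}$ with the anti-affine part $(\Aut_X^\circ)_{\rm ant}$, which is smooth, connected, contains the abelian variety $E$ (hence equals it), and is normal in $\Aut_X$ by \cite[Theorem 1.2.1, Remark 1.2.2]{Brion}. Your line could be repaired by noting that $K/E$ is affine, so the proper $T$-group scheme $gE_Tg^{-1}$ maps trivially to $(K/E)_T$ --- but that is the anti-affine argument in disguise.
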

\begin{proof}
Since $X$ is not birationally ruled, \cite[Theorem 1]{Popov} implies that $(\Aut_X^\circ)_{\red}$ does not contain a connected linear algebraic group, hence, by \cite[Theorem 1.1.1]{Brion}, $(\Aut_X^\circ)_{\red}$ is an Abelian variety. 
Then, by \cite[Proposition 2.2.1]{Brion}, the stabilizers of the $(\Aut_X^\circ)_{\red}$-action on $X$ are finite. Since $X$ is not an Abelian surface, the $(\Aut_X^\circ)_{\red}$-action on $X$ cannot be transitive, hence $(\Aut_X^\circ)_{\red}$ is either trivial or an elliptic curve. Now, by Theorem \ref{Mainwithdetails}, the action of $E$ on the first factor of $E \times C$ descends to a faithful action of $E$ on $X$. This yields a monomorphism, and hence an isomorphism, of elliptic curves $E \to (\Aut_X^\circ)_{\red}$.
To see that $(\Aut_X^\circ)_{\red}$ is normal in $\Aut_X$, let $(\Aut_{X}^{\circ})_{{\rm ant}}$ be the largest anti-affine subgroup scheme of $\Aut_X^{\circ}$ (see \cite[Chapter 5]{Brion}). By \cite[Lemma 5.1.1]{Brion}, $(\Aut_{X}^{\circ})_{{\rm ant}}$ is smooth and connected, and it contains $(\Aut_X^\circ)_{\red}$, since the latter is anti-affine. Hence, $(\Aut_X^\circ)_{\red} = (\Aut_{X}^{\circ})_{{\rm ant}}$. By \cite[Theorem 1.2.1, Remark 1.2.2]{Brion}, $(\Aut_X^{\circ})_{{\rm ant}} = (\Aut_X^{\circ})_{\red}$ is normal in $\Aut_X^{\circ}$, hence also normal in $\Aut_X$. This finishes the proof.
\end{proof}

%
%

\section{Computing centralizers and normalizers}
\label{sec: normalizercentralizer}
First, recall that if $D$ is an integral curve of arithmetic genus $p_a(D) = 1$ with smooth locus $D^{sm}$ and with a chosen point $O \in D^{sm}$, then there is a decomposition $\Aut_D = D^{sm} \rtimes \Aut_{D,O}$, where the group scheme $\Aut_{D,O}$ of automorphisms fixing $O$ acts on the group scheme $D^{sm}$ of translations $t_s$ by points $s \in D^{sm}$ via $g \circ t_s \circ g^{-1} = t_{g(s)}$. This is because $\Aut_{D,O}$ acts on $D^{sm}$ via group scheme automorphisms, see \cite[Theorem 4.8]{Silverman} and \cite[Proposition 6]{BombieriMumford3}. We use the letter $D$ here, since, using the terminology of Section \ref{sec: main}, the following Lemma \ref{translation} applies to both $D = E$ and $D = C$.


\begin{Lemma}\label{translation}
Let $D$ be an integral curve with $p_a(D) = 1$ and with a chosen point $O \in D^{sm}$. Let $G_1 \subseteq \Aut_{D,O}$ and $G_2 \subseteq D^{sm}$ be subgroup schemes.
Then, the following are equivalent:
\begin{enumerate}
\item \label{item: translation1} $G_2$ normalizes $G_1$.
\item \label{item: translation2} $G_1$ and $G_2$ commute.
\item \label{item: translation3} $G_2 \subseteq D^{G_1}$.
\end{enumerate}
\end{Lemma}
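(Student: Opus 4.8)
The plan is to reduce each of the three conditions to one and the same functorial statement, namely that every point of $G_1$ fixes every point of $G_2$, and then read this off as $G_2 \subseteq D^{G_1}$. Throughout I would argue with the functor of points over an arbitrary $k$-scheme $T$, using two inputs recalled just before the lemma: the semidirect product decomposition $\Aut_D = D^{sm} \rtimes \Aut_{D,O}$, and the conjugation relation $g \circ t_v \circ g^{-1} = t_{g(v)}$ for $g \in \Aut_{D,O}(T)$ and $v \in D^{sm}(T)$, with $g$ acting on $D^{sm}$ by group automorphisms so that $g(-v) = -g(v)$.

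The computational core is the identity, for $g \in G_1(T)$ and $t_s \in G_2(T)$,
$$
t_s \circ g \circ t_s^{-1} \;=\; t_s \circ t_{-g(s)} \circ g \;=\; t_{s - g(s)} \circ g,
$$
obtained from $g \circ t_{-s} = t_{g(-s)} \circ g = t_{-g(s)} \circ g$. Since the decomposition $\Aut_D = D^{sm} \rtimes \Aut_{D,O}$ is unique, the right-hand side lies in $\Aut_{D,O}(T)$ if and only if its translation part $s - g(s)$ is trivial, i.e. if and only if $g(s) = s$; and in that case it equals $g$, hence lies in $G_1(T)$. Thus $t_s \circ g \circ t_s^{-1} \in G_1(T)$ if and only if $g(s) = s$.

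With this identity the equivalences are immediate. For \eqref{item: translation2}, $g$ and $t_s$ commute precisely when $t_{g(s)} = g \circ t_s \circ g^{-1} = t_s$, that is, when $g(s) = s$. For \eqref{item: translation1}, the displayed computation shows that $t_s$ normalizes $G_1$ precisely when $g(s) = s$ for all relevant points $g$. And $g(s) = s$ holding for all $T' \to T$ and all $g \in G_1(T')$, $s \in G_2(T')$ is exactly the statement $G_2 \subseteq D^{G_1}$, which is \eqref{item: translation3}. (The implication \eqref{item: translation2} $\Rightarrow$ \eqref{item: translation1} is in any case formal, since commuting subgroup schemes normalize one another.)

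The step I would be most careful about is the functorial bookkeeping rather than any computation: the normalizer and centralizer are defined by conditions quantified over all base changes $T' \to T$, and $G_2 \subseteq D^{G_1}$ is the assertion that the fixed-point condition holds on $T$-points for every $T$. Because both the commutator identity above and the uniqueness of the semidirect product decomposition are stable under arbitrary base change, the single equation $g(s) = s$ governs all three conditions uniformly, and it suffices to verify the equivalences on $T$-points functorially, with no need to descend to $k$-points.
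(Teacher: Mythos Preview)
Your proof is correct and follows essentially the same approach as the paper: both hinge on the identity $t_s \circ g \circ t_s^{-1} = t_{s-g(s)} \circ g$ and then observe that this conjugate lies in $G_1 \subseteq \Aut_{D,O}$ precisely when $s - g(s) = 0$. The paper extracts this last step by evaluating at $O_{T'}$ (an element of $\Aut_{D,O}$ fixes $O$, forcing $t_{s-g(s)}(O)=O$), while you phrase it via uniqueness of the semidirect product decomposition; these are the same observation, and your explicit attention to the functorial quantification over $T' \to T$ is if anything slightly more careful than the paper's treatment.
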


\begin{proof}
Note that if $T$ is a $k$-scheme, $g \in G_1(T)$, and $t_s \in G_2(T)$, then we have
\begin{equation}
\label{eq: translation}
 t_s  \circ g \circ t_{-s} = t_{s - g(s)} \circ g.
\end{equation}
In particular, if $s = g(s)$ for all $t_s \in G_2(T)$, then $G_1$ and $G_2$ commute, hence $\eqref{item: translation3} \Rightarrow \eqref{item: translation2}$. The implication $\eqref{item: translation2} \Rightarrow \eqref{item: translation1}$ is clear, hence it remains to prove $\eqref{item: translation1} \Rightarrow \eqref{item: translation3}$:
if $G_2$ normalizes $G_1$, then Equation \eqref{eq: translation} shows that $t_{s - g(s)}(O_{T'}) = O_{T'}$ for all $T$-schemes $T'$ and $t_s \in G_2(T)$. This is only possible if $s = g(s)$, hence $G_2(T) \subseteq D^{G_1}(T)$.
\end{proof}

\subsection{Bielliptic surfaces}
We use the notation of Section \ref{sec: main} and Lemma \ref{translation}, but assume that $D$ is smooth. In each of the cases $p \neq 2,3$, $p = 3$ and $p = 2$, we will recall the structure of the subgroup scheme $\Aut_{D,O} \subseteq \Aut_D$. Moreover, for every commutative subgroup $H \subseteq \Aut_{D,O}$, we list the fixed locus $D^H$ and, if $\Aut_{D,O}$ is non-commutative, also the centralizer and normalizer of $H$ in Lemma \ref{char0}, Lemma \ref{char3}, and Lemma \ref{char2}. All of this is well-known and elementary to check, and we refer the reader to \cite[Section III.10 and Appendix A]{Silverman} for details. 
Together with Lemma \ref{translation}, it will be straightforward to calculate the groups $C_{\Aut_E}(G)$ and $C_{\Aut_C}(\alpha(G))$ of Theorem \ref{Main} and produce Table \ref{bielliptic}. We will leave the details to the reader, but we will explain how the calculations work in Example \ref{ex: example}. Using Theorem \ref{Mainwithdetails} \eqref{item: Main6}, we calculate $M$ in every case. The results of the calculations of this section are summarized in Table \ref{bielliptic}.
To simplify notation, we define $$N := N_{\Aut(C)}(\alpha(G)(k))/(C_{\Aut_C}(\alpha(G))(k)).$$

\subsubsection{Characteristic $p \neq 2,3$}
By Bombieri and Mumford \cite[p.37]{BombieriMumford2}, the group schemes $G$ leading to bielliptic surfaces $X = (E \times C)/G$ are the seven groups 
$$\ZZ/2\ZZ,\ZZ/3\ZZ,\ZZ/4\ZZ,\ZZ/6\ZZ,(\ZZ/2\ZZ)^2,(\ZZ/3\ZZ)^3,\ZZ/4\ZZ \times \ZZ/2\ZZ.$$ The translation subgroup of $\alpha(G)$ is trivial in the first four of these cases, and isomorphic to the group $\ZZ/2\ZZ,\ZZ/3\ZZ,$ or $\ZZ/2\ZZ$ in the other three cases, respectively.

\begin{Lemma}\label{char0}
The non-trivial commutative subgroup schemes $H$ of $\Aut_{D,O}$ and their fixed loci $D^H$ are as in Table~\ref{Table0}.

\begin{table}[!h]
\centering
$$
\begin{array}{|c|c|c|c|} \hline
j(D) & \Aut_{D,O} &  H & D^H \\ \hline \hline
\neq 0,1728 & \ZZ/2\ZZ & \ZZ/2\ZZ & (\ZZ/2\ZZ)^2 \\ \hline 
1728 & \ZZ/4\ZZ & \begin{array}{c} \ZZ/2\ZZ \\ \ZZ/4\ZZ \end{array} &  \begin{array}{c} (\ZZ/2\ZZ)^2 \\ \ZZ/2\ZZ \end{array} \\ \hline 
0 & \ZZ/6\ZZ & \begin{array}{c} \ZZ/2\ZZ \\ \ZZ/3\ZZ \\ \ZZ/6\ZZ \end{array} &  \begin{array}{c} (\ZZ/2\ZZ)^2 \\ \ZZ/3\ZZ \\ \{1\} \end{array}  \\ \hline
\end{array}
$$
\caption{$\Aut_{D,O}$ and its subgroups in characteristic $\neq 2,3$}
\label{Table0}
\end{table}
\end{Lemma}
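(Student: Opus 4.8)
The plan is to split the statement into two essentially independent tasks: (i) determine the group $\Aut_{D,O}$ and list its subgroups, and (ii) compute the fixed locus $D^H$ for each such $H$. For (i) I would recall the classical description of automorphisms of an elliptic curve fixing the origin (see \cite[Section III.10 and Appendix A]{Silverman}): writing $D$ in Weierstrass form $y^2 = x^3 + ax + b$, which is possible since $p \neq 2,3$, every automorphism fixing $O$ has the form $(x,y) \mapsto (u^2 x, u^3 y)$ with $u^4 a = a$ and $u^6 b = b$. This yields $\Aut_{D,O} = \ZZ/2\ZZ$, generated by $[-1]$ (the value $u = -1$), when $j(D) \neq 0,1728$; $\Aut_{D,O} = \ZZ/4\ZZ$ (the $u \in \mu_4$) when $j(D) = 1728$; and $\Aut_{D,O} = \ZZ/6\ZZ$ (the $u \in \mu_6$) when $j(D) = 0$. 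Since $p \neq 2,3$, these group schemes are \'etale, and every subgroup of a cyclic group is cyclic, hence commutative; so the non-trivial commutative subgroups are precisely the non-trivial cyclic ones, which I would simply enumerate: $\ZZ/2\ZZ$ in the first case; $\ZZ/2\ZZ$ and $\ZZ/4\ZZ$ in the second; and $\ZZ/2\ZZ$, $\ZZ/3\ZZ$, $\ZZ/6\ZZ$ in the third.

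For (ii) the key observation is that each $h \in \Aut_{D,O}$ is an automorphism of $D$ \emph{as an elliptic curve}, so $D^{\langle h\rangle} = \ker(h - {\rm id})$ is a subgroup scheme of $D$, and for a cyclic $H = \langle h\rangle$ one has $D^H = D^{\langle h\rangle}$. Because a non-trivial automorphism of finite order prime to $p$ acts on the one-dimensional tangent space $T_O D$ by a primitive root of unity $\chi \neq 1$, the isogeny $h - {\rm id}$ has non-zero (scalar) derivative $\chi - 1$ at $O$, hence is separable and $D^H$ is \'etale --- matching the constant group schemes in the table. The order of $D^H$ is then $\deg(h - {\rm id}) = (\chi - 1)(\overline{\chi}-1) = |\chi - 1|^2$, where $\chi, \overline{\chi}$ are the two eigenvalues of $h$ on the $\ell$-adic Tate module ($\ell \neq p$), $\chi$ being the tangent-space eigenvalue. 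I would evaluate this case by case: $\chi = -1$ gives degree $4$; $\chi = \zeta_3$ gives degree $3$; $\chi = i$ gives degree $2$; and $\chi = \zeta_6$ gives degree $1$, i.e.\ the trivial subgroup. This already pins down the answer whenever the degree is $1$, $2$, or $3$, since there is a unique isomorphism type of \'etale group scheme of that order.

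The only point requiring an argument beyond counting is distinguishing $(\ZZ/2\ZZ)^2$ from $\ZZ/4\ZZ$ for the degree-$4$ fixed locus of $[-1]$, and here I would bypass the norm computation entirely: $[-1]$ fixes $P$ if and only if $P = -P$, that is $P \in D[2]$, and $D[2] \cong (\ZZ/2\ZZ)^2$ because $p \neq 2$. The $\ZZ/2\ZZ$-subgroups of $\ZZ/4\ZZ$ and $\ZZ/6\ZZ$ are generated by $h^2 = [-1]$ and $h^3 = [-1]$ respectively, so their fixed loci are again $D[2] = (\ZZ/2\ZZ)^2$, while the $\ZZ/3\ZZ$-subgroup of $\ZZ/6\ZZ$ is generated by the order-$3$ automorphism $h^2$, whose fixed locus has the computed order $3$ and is therefore $\ZZ/3\ZZ$. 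Assembling these entries reproduces Table \ref{Table0}. I do not anticipate a genuine obstacle: once the separability remark guarantees \'etaleness and the group-automorphism property reduces fixed loci to kernels, everything is a short computation, with the mild $\ZZ/4\ZZ$ versus $(\ZZ/2\ZZ)^2$ ambiguity settled cleanly by the elementary $2$-torsion argument.
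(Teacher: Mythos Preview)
Your proof is correct. The paper does not actually prove this lemma: it simply declares the content ``well-known and elementary to check'' and refers to \cite[Section III.10 and Appendix A]{Silverman} for details. Your argument supplies those details explicitly, via the same Weierstrass-form description of $\Aut_{D,O}$ and the identification $D^H = \ker(h - {\rm id})$ for $H = \langle h \rangle$, so it is entirely in line with the paper's intent.

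One small remark on presentation: the passage through the $\ell$-adic Tate module to justify $\deg(h - {\rm id}) = (\chi - 1)(\overline{\chi} - 1)$ is correct but slightly heavier than necessary. You can stay entirely inside the endomorphism ring: for an automorphism $h$ one has $\widehat{h} = h^{-1}$, hence $\deg(h - 1) = (h-1)\widehat{(h-1)} = (h-1)(h^{-1}-1) = 2 - h - h^{-1}$, and the minimal polynomial of $h$ (namely $h^2 + 1 = 0$, $h^2 + h + 1 = 0$, or $h^2 - h + 1 = 0$) immediately gives $h + h^{-1}$. This avoids having to argue that the tangent-space eigenvalue and its conjugate are the Tate-module eigenvalues. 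But this is a matter of taste; your version is fine.
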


\begin{Example}\label{ex: example}
We explain how to calculate the centralizers in the case where the group is $G = \ZZ/2\ZZ$.

For the calculation of $C_{\Aut_E}(G)$, recall that translations in $E$ always commute with $G$. Next, by Lemma \ref{translation}, an automorphism $h_E \in \Aut_{E,O}$ commutes with $G$ precisely if $G \subseteq E^{h_E}$. Now, we apply Lemma \ref{char0}:
if $j(E) \neq 1728$, or $j(E) = 1728$ and $G$ does not coincide with the fixed locus of an automorphism $h_E$ of order $4$ in $\Aut_{E,O}$, then $C_{\Aut_E}(G)/E \cong \ZZ/2\ZZ$. This is Case a) in the first row of Table \ref{bielliptic}. 
If $j(E) = 1728$ and $G = E^{h_E}$, then $C_{\Aut_E}(G)/E \cong \ZZ/4\ZZ$. This is Case b) in the first row of Table \ref{bielliptic} and it seems to be missing from \cite[Table 3.2]{BennettMiranda}, see also Remark \ref{BennettMiranda}.

For the calculation of $C_{\Aut_C}(\alpha(G))$, we apply Lemma \ref{translation} to find the subgroup of translations of $C$ that commute with $\alpha(G)$. By Lemma \ref{char0}, this group is isomorphic to $(\ZZ/2\ZZ)^2$. Next, by Lemma \ref{char0}, the group $\alpha(G)$ is in the center of $\Aut_{C,O}$, so $C_{\Aut_C}(\alpha(G)) \cong (\ZZ/2\ZZ)^2 \rtimes \Aut_{C,O}$. Now, if $j(E) \neq 0,1728$, then $C_{\Aut_C}(\alpha(G))/\alpha(G) \cong (\ZZ/2\ZZ)^2$, if $j(E) = 1728$, then $C_{\Aut_C}(\alpha(G))/\alpha(G) \cong D_8$, and if $j(E) = 0$, then $C_{\Aut_C}(\alpha(G))/\alpha(G) \cong A_4$. These are the Cases i), ii), and iii) in the first row of Table \ref{bielliptic}.
\end{Example}

Similarly, one can calculate the centralizers of $G$ and $\alpha(G)$ for all seven possibilities of $G$. They are listed in Table \ref{bielliptic}. As for the group $N$, we have the following:

\begin{Lemma} \label{lem: N0}
 The group $N$ is as in Table \ref{TableN0}.
\vspace{0.1cm}
\begin{table}[h]
\centering
$$
\begin{array}{|c||c|c|c|c|c|c|c|} \hline
G & \ZZ/2\ZZ  & \ZZ/3\ZZ & \ZZ/4\ZZ & \ZZ/6\ZZ & (\ZZ/2\ZZ)^2 & (\ZZ/3\ZZ)^2  & \ZZ/4\ZZ \times \ZZ/2\ZZ  \\ \hline 
N & \{1\}  & \{1\} & \{1\} & \{1\} & \ZZ/2\ZZ & S_3  &  \ZZ/2\ZZ  \\ \hline
\end{array}
$$
\caption{The group $N$ in characteristic $\neq 2,3$}
\label{TableN0}
\end{table}
\end{Lemma}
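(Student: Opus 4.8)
The plan is to compute, for each of the seven groups $G$, the quantity $N = N_{\Aut(C)}(\alpha(G)(k))/(C_{\Aut_C}(\alpha(G))(k))$ directly from the structure of $\Aut_{C,O}$ and the action on the translation lattice of $C$. Since $p \neq 2,3$ and $G$ is étale here, $\alpha(G)(k) = \alpha(G)$ is an honest finite abelian subgroup of the abstract group $\Aut(C) = C(k) \rtimes \Aut_{C,O}(k)$, where $\Aut_{C,O}(k)$ is cyclic of order $2$, $4$, or $6$ according as $j(C) \neq 0,1728$, $j(C) = 1728$, or $j(C) = 0$ (Lemma \ref{char0}). First I would observe that because $N$ is by definition a quotient of the normalizer by the centralizer, it injects into the outer automorphisms of $\alpha(G)$ induced by conjugation in $\Aut(C)$; so $N$ is always a subquotient of $\Aut(\alpha(G))$, and the task is to pin down exactly which such automorphisms are realized by elements of $\Aut(C)$ normalizing $\alpha(G)$.

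The key computational input is Equation \eqref{eq: translation}, $t_s \circ g \circ t_{-s} = t_{s-g(s)} \circ g$, which controls how conjugation interacts with the semidirect product decomposition. Using this, for the four cases where $\alpha(G)$ has trivial translation part, namely $G \in \{\ZZ/2\ZZ, \ZZ/3\ZZ, \ZZ/4\ZZ, \ZZ/6\ZZ\}$, the subgroup $\alpha(G)$ sits inside the cyclic (hence abelian) group $\Aut_{C,O}(k)$, so it is normal and every element of $\Aut_{C,O}(k)$ already centralizes it; conjugation by a translation $t_s$ can only move $\alpha(G)$ within $\Aut(C)$ via the term $t_{s-g(s)}$, and Lemma \ref{translation} shows the normalizing translations are exactly the centralizing ones (those with $s = g(s)$). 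Hence normalizer equals centralizer and $N = \{1\}$ in all four cases. The substantive cases are the remaining three, where $\alpha(G)$ has a nontrivial translation subgroup: here $\alpha(G)$ is generated by a torsion translation together with an automorphism fixing $O$, and conjugation by an element of $\Aut_{C,O}(k)$ can permute the torsion translations in $\alpha(G)$ nontrivially. For $G = (\ZZ/2\ZZ)^2$ the translation part is a $\ZZ/2\ZZ$ and the outer action realized is the swap $\ZZ/2\ZZ$; for $G = \ZZ/4\ZZ \times \ZZ/2\ZZ$ an analogous analysis of the order-$2$ translation part again yields $\ZZ/2\ZZ$; and for $G = (\ZZ/3\ZZ)^2$ the translation part is a $\ZZ/3\ZZ$ on which an order-$2$ automorphism acts by inversion while the order-$3$ automorphisms in $\Aut_{C,O}(k)$ (present since $j(C) = 0$) permute the nontrivial $3$-torsion cyclically, producing the full $\Sym_3$.

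The plan for each nontrivial case is therefore: identify $\alpha(G)$ explicitly inside $C(k) \rtimes \Aut_{C,O}(k)$; compute the centralizer using Lemma \ref{translation} (the commuting translations are $C^H$ from Table \ref{Table0}, together with the part of $\Aut_{C,O}(k)$ fixing the relevant torsion points); compute the normalizer by determining which elements of $\Aut_{C,O}(k)$ carry the torsion translation part of $\alpha(G)$ into itself; and read off the induced permutation action on $\alpha(G)$ to get the quotient. I expect the main obstacle to be the $(\ZZ/3\ZZ)^2$ case, where one must verify carefully that an order-$3$ automorphism $h_C$ of $(C,O)$ with $j(C)=0$ genuinely permutes the three nontrivial points of a $\ZZ/3\ZZ \subseteq C[3]$ cyclically (rather than fixing them or acting by inversion), so that combined with the order-$2$ inversion one obtains all of $\Sym_3 = \Aut(\ZZ/3\ZZ \times \ZZ/3\ZZ \text{-summand})$ acting, and to confirm that no additional translations enter the normalizer beyond the centralizer. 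This amounts to the elementary but case-sensitive fixed-point analysis of the complex-multiplication automorphisms of $C$ on its $3$-torsion, which I would carry out using the explicit description of $\Aut_{C,O}$ acting on $C[n]$.
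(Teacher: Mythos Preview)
Your treatment of the four cyclic cases is correct and matches the paper. The gap is in the three non-cyclic cases, where you misidentify the source of the nontrivial elements of $N$.

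You write that the normalizer is found by ``determining which elements of $\Aut_{C,O}(k)$ carry the torsion translation part of $\alpha(G)$ into itself,'' and earlier you invoke Lemma~\ref{translation} to assert that normalizing translations coincide with centralizing translations. But Lemma~\ref{translation} only applies when the subgroup being normalized lies inside $\Aut_{C,O}$; once $\alpha(G)$ contains a nontrivial translation $t_c$, a translation $t_s$ can normalize $\alpha(G)$ without centralizing it, because $t_s\,\sigma\, t_{-s}=t_{s-\sigma(s)}\,\sigma$ may land in $\alpha(G)$ with $s-\sigma(s)$ equal to $c$ rather than $0$. This is exactly where the nontrivial elements of $N$ come from in the paper's proof: for $G=(\ZZ/2\ZZ)^2$ the generator of $N$ is translation by a $4$-torsion point $c'$ with $2c'=c$; for $G=\ZZ/4\ZZ\times\ZZ/2\ZZ$ it is translation by a $2$-torsion point not in $\alpha(G)$; and for $G=(\ZZ/3\ZZ)^2$ the order-$3$ element of $N$ is translation by a $3$-torsion point outside $\langle c\rangle$.

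Your proposed mechanism for the order-$3$ part of $N\cong S_3$ in the $(\ZZ/3\ZZ)^2$ case is particularly problematic: the order-$3$ automorphism in $\Aut_{C,O}$ is \emph{already an element of} $\alpha(G)$ (it is the non-translation generator), so it lies in the centralizer and contributes trivially to $N$. Moreover, since $\alpha(G)$ is abelian, the translation part $\langle c\rangle$ must be fixed pointwise by this order-$3$ automorphism (Lemma~\ref{translation}), not ``permuted cyclically.'' The order-$2$ element of $N$ does come from the involution in $\Aut_{C,O}$ acting by inversion on $\langle c\rangle$, so that part of your sketch is fine; but you must supplement it with the translation computation above to obtain the full $S_3$.
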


\begin{proof}
If $\alpha(G)$ does not contain translations, then $N_{\Aut_C}(\alpha(G)) = C_{\Aut_C}(\alpha(G))$ by Lemma \ref{translation} and because $\Aut_{C,O}$ is abelian. Hence, $N$ is trivial in these cases.

If $G = (\ZZ/2\ZZ)^2$, then conjugation by $N_{\Aut_C}(\alpha(G))$ fixes the unique non-trivial $2$-torsion point $c$ in $\alpha(G)$. By Lemma \ref{char0} and Lemma \ref{translation}, this implies $|N| \mid 2$. The non-trivial element of $N$ is induced by a $4$-torsion point $c'$ of $C$ with $2c' = c$.

If $G = (\ZZ/3\ZZ)^2$, then conjugation by $N_{\Aut_C}(\alpha(G))$ preserves the subgroup $\langle c \rangle \subseteq \alpha(G)$ generated by a non-trivial $3$-torsion point $c$ in $\alpha(G)$. Thus, the action of $N_{\Aut_C}(\alpha(G))$ descends to $C'' := C/\langle c \rangle$. There, it maps to the normalizer in $\Aut_{C''}$ of a subgroup $G'' \subseteq \Aut_{C'',O''}$ of order $3$, where $O''$ is the image of $O$.  By Lemma \ref{translation} and Table \ref{Table0}, the normalizer of $G''$ is isomorphic to $\ZZ/3\ZZ \rtimes \ZZ/6\ZZ$, where $G''$ sits inside the second factor. Thus, $N$ is isomorphic to a subgroup of $S_3$. One can check that the involution in $\Aut_{C,O}$ and a $3$-torsion point not contained in $\langle c \rangle$ induce non-trivial elements of $N$, hence $N \cong S_3$.

Finally, if $G = \ZZ/4\ZZ \times \ZZ/2\ZZ$, then, again, conjugation by $N_{\Aut_C}(\alpha(G))$ fixes the unique non-trivial $2$-torsion point $c$ in $\alpha(G)$. In this case, however, the involution in $\alpha(G) \cap \Aut_{C,O}$ is the unique element in $\alpha(G)$ which is divisible by $2$, hence it is also fixed by $N_{\Aut_C}(\alpha(G))$. Thus, by Lemma \ref{translation}, a translation can be in $N_{\Aut_C}(\alpha(G))$ only if it is a translation by a $2$-torsion point. The non-trivial $2$-torsion point that commutes with $\alpha(G)$ is already contained in $\alpha(G)$, hence $N \cong \ZZ/2\ZZ$ is generated by one of the other two non-trivial $2$-torsion points.
\end{proof}

\begin{Proposition}\label{prop: trivialM}
The cases where $M$ is non-trivial are precisely the following:
\begin{enumerate}
\item \label{item: trivialM1} $G = (\ZZ/2\ZZ)^2$, $j(E) = 1728$, and the fixed points $G^{h_E}$ of the automorphism $h_E$ of order $4$ in $\Aut_{E,O}$ act as translations on $C$. In this case, $M = \ZZ/2\ZZ$.
\item \label{item: trivialM2} $G = (\ZZ/3\ZZ)^2$, $j(E) = 0$, and the fixed points $G^{h_E}$ of the automorphism $h_E$ of order $3$ in $\Aut_{E,O}$ act as translations on $C$. In this case, $M = \ZZ/3\ZZ$.
\end{enumerate}
\end{Proposition}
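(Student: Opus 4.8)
The plan is to feed the explicit description of $M(k)$ from Theorem \ref{Mainwithdetails} \eqref{item: Main6}, which applies because $X$ is bielliptic, into the computation of $N$ carried out in Lemma \ref{lem: N0}. Since $M$ is a subquotient of $N$ by Theorem \ref{Mainwithdetails} \eqref{item: Main5}, Table \ref{TableN0} shows $M = \{1\}$ whenever $G \in \{\ZZ/2\ZZ, \ZZ/3\ZZ, \ZZ/4\ZZ, \ZZ/6\ZZ\}$, and reduces the whole statement to the three groups $G = (\ZZ/2\ZZ)^2$, $G = (\ZZ/3\ZZ)^2$, and $G = \ZZ/4\ZZ \times \ZZ/2\ZZ$, for which I would work with
$$
M(k) \cong \frac{\{(h_E,h_C) \in N_{\Aut_E}(G) \times N_{\Aut_C}(\alpha(G)) \mid \alpha \circ {\rm ad}_{h_E} = {\rm ad}_{h_C} \circ \alpha\}}{C_{\Aut_E}(G)(k) \times C_{\Aut_C}(\alpha(G))(k)}.
$$

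The key simplification is that, because $G$ acts on $E$ by translations and translations commute, writing $h_E = t_s \circ g$ with $g \in \Aut_{E,O}$ yields ${\rm ad}_{h_E}(t_x) = t_{g(x)}$ for every $x \in G$; thus the conjugation action of $N_{\Aut_E}(G)(k)$ on $G$ factors through $\Aut_{E,O}$, with image $A_E \subseteq \Aut(G)$ equal to the image of $\Aut_{E,O}$. Sending a compatible pair to the automorphism it induces on $G$ then identifies the displayed quotient with the intersection $A_E \cap N$ inside $\Aut(G)$, where $N$ is transported via $\alpha$ and the kernel of the map is exactly the product of centralizers. For $G = (\ZZ/\ell\ZZ)^2 = E[\ell]$ with $\ell \in \{2,3\}$ we have $\Aut(G) \cong {\rm GL}_2(\FF_\ell)$, and by Table \ref{Table0} the subgroup $A_E$ is $\{\pm 1\}$, $\ZZ/4\ZZ$, or $\ZZ/6\ZZ$ according as $j(E) \neq 0,1728$, $j(E) = 1728$, or $j(E) = 0$, so the proposition becomes a matter of intersecting two explicit subgroups of ${\rm GL}_2(\FF_\ell)$.

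For $G = (\ZZ/2\ZZ)^2$, the proof of Lemma \ref{lem: N0} shows that the non-trivial element of $N \cong \ZZ/2\ZZ$ is conjugation by a $4$-torsion point $c'$ with $2c' = c$; Equation \eqref{eq: translation} shows it fixes the element of $G$ mapping to the translation $t_c$ and swaps the other two, i.e.\ it is a transposition in ${\rm GL}_2(\FF_2) \cong S_3$. Such a transposition lies in $A_E$ exactly when $\Aut_{E,O}$ contains an automorphism acting on $E[2] = G$ with a single non-trivial fixed point, namely the order-$4$ automorphism forced by $j(E) = 1728$, and that fixed point must be the one mapping to $t_c$; this is condition \eqref{item: trivialM1}, giving $M \cong \ZZ/2\ZZ$, whereas otherwise $A_E$ acts on $E[2]$ trivially (for $j(E) \neq 0,1728$) or by order-$3$ permutations (for $j(E) = 0$), so $A_E \cap N = \{1\}$. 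For $G = (\ZZ/3\ZZ)^2$ one has $N \cong S_3 \subseteq {\rm GL}_2(\FF_3)$, whose order-$3$ elements are the unipotents fixing the translation line $\langle t_c \rangle \subseteq \alpha(G)$ and whose involutions have determinant $-1$; since $-I$ is central in ${\rm GL}_2(\FF_3)$ while $S_3$ has trivial center, $-I \notin N$, and $N$ has no element of order $4$. Hence $A_E \cap N \neq \{1\}$ forces $A_E \cong \ZZ/6\ZZ$, i.e.\ $j(E) = 0$, and then the order-$3$ element of $A_E$ is the unipotent induced by $\rho_E$, which fixes the line $\alpha(G^{\rho_E})$. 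This unipotent lies in $N$ if and only if $\alpha(G^{\rho_E}) = \langle t_c\rangle$, that is, the fixed points of $\rho_E$ act as translations on $C$ — condition \eqref{item: trivialM2} — in which case $M \cong \ZZ/3\ZZ$, and otherwise the two lines differ and $A_E \cap N = \{1\}$.

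For $G = \ZZ/4\ZZ \times \ZZ/2\ZZ$, the generator of $N \cong \ZZ/2\ZZ$ (conjugation by a suitable $2$-torsion point) fixes $G[2]$ pointwise yet sends the order-$4$ generator $f_1$ to $f_1 + f_2$, where $f_2$ spans the translation line; an element of $A_E$ realizing this would have to fix all of $E[2]$ while moving $f_1$, and Table \ref{Table0} leaves only $\pm 1$ fixing $E[2]$ pointwise, with $-1$ sending $f_1$ to $3f_1 = f_1 + 2f_1 \neq f_1 + f_2$ since $2f_1 \neq f_2$. Thus $A_E \cap N = \{1\}$ and $M = \{1\}$, so the list of non-trivial cases is exactly \eqref{item: trivialM1} and \eqref{item: trivialM2}. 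The main obstacle throughout is this final matching step: one must keep careful track of how $\alpha$ identifies the translation subgroup of $\alpha(G)$ with a distinguished line in $G$ and compare it against the fixed locus of the relevant extra automorphism of $E$, and the hypotheses \eqref{item: trivialM1} and \eqref{item: trivialM2} are precisely the conditions under which these two lines agree.
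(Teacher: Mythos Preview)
Your proof is correct and rests on the same core tool as the paper—Theorem \ref{Mainwithdetails} \eqref{item: Main6}—but you package the computation more uniformly. The paper treats the three surviving groups case by case, sometimes invoking the alternative bound from Theorem \ref{Mainwithdetails} \eqref{item: Main5} via $\Aut_{E'}/((f_E)_* C_{\Aut_E}(G))$ (for instance to rule out $j(E)\neq 1728$ when $G=(\ZZ/2\ZZ)^2$), and then checks the compatibility $\alpha\circ{\rm ad}_{h_E}={\rm ad}_{h_C}\circ\alpha$ by hand for representative pairs $(h_E,h_C)$. You instead observe once and for all that this compatibility identifies $M(k)$ with the intersection $A_E\cap N$ inside $\Aut(G)$, where $A_E$ is the image of the normalizing part of $\Aut_{E,O}$; this turns each case into a transparent computation in ${\rm GL}_2(\FF_\ell)$ or $\Aut(\ZZ/4\ZZ\times\ZZ/2\ZZ)$, and the extra ``dagger'' conditions \eqref{item: trivialM1}, \eqref{item: trivialM2} emerge naturally as the requirement that two distinguished lines in $G$ coincide. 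The substance of the two arguments is the same, but your formulation avoids switching between the two bounds in Theorem \ref{Mainwithdetails} \eqref{item: Main5} and makes the role of the fixed-line condition more visible.
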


\begin{proof}
Assume that $M$ is non-trivial.
By Theorem \ref{Mainwithdetails} \eqref{item: Main5} and Table \ref{TableN0}, this can only happen if $G \in \{ (\ZZ/2\ZZ)^2,(\ZZ/3\ZZ)^2, \ZZ/4\ZZ \times \ZZ/2\ZZ \}$.

Assume $G = (\ZZ/2\ZZ)^2$. By Theorem \ref{Mainwithdetails} \eqref{item: Main5} and Table \ref{TableN0}, we have $|M| \mid 2$. If $j(E) \neq 1728$, then $\Aut_{E'}/((f_E)_* C_{\Aut_E}(G))$ has odd order, hence $M = \{1\}$ by Theorem \ref{Mainwithdetails} \eqref{item: Main5}. 
If $j(E) = 1728$, we use Theorem \ref{Mainwithdetails} \eqref{item: Main6}:
by our description of the centralizers and normalizers, both $N_{\Aut_E}(G)/C_{\Aut_E}(G)$ and $N_{\Aut_C}(\alpha(G))/C_{\Aut_C}(\alpha(G))$ are isomorphic to $\ZZ/2\ZZ$ and every non-trivial element of $M(k)$ can be represented by $h = (h_E,h_C)$, where $h_E \in \Aut_{E,O}$ is of order $4$ and $h_C$ is translation by a non-trivial $4$-torsion point such that $h_C^2 \in \alpha(G)$. By Lemma \ref{translation} and Table \ref{Table0}, we have $\alpha \circ {\rm ad}_{h_E} = {\rm ad}_{h_C} \circ \alpha$ if and only if the fixed point of $h_E$ maps via $\alpha$ to the unique translation in $\alpha(G)$. This is Case \eqref{item: trivialM1}.

Next, assume $G = (\ZZ/3\ZZ)^2$. Let $h = (h_E,h_C)$ be an automorphism of $E \times C$ lifting a non-trivial element of $M(k)$. By our description of $C_{\Aut_E}(G)$ and $N$, we may assume that $h_C$ is either the involution in $\Aut_{C,O}$ or translation by a $3$-torsion point $c' \not\in \alpha(G)$, and that $h_E \in \Aut_{E,O}$.
If $h_E$ is an involution, then ${\rm ad}_{h_E}$ fixes only the identity in $G$, while ${\rm ad}_{h_C}$ has more fixed points on $\alpha(G)$. Hence, by Theorem \ref{Mainwithdetails} \eqref{item: Main6}, $h$ does not normalize the $G$-action on $E \times C$ in this case, a contradiction to Proposition \ref{prop: normalizer}.
Thus, we may further assume that $j(E) = 0$ and $h_E$ has order $3$. Then, we may assume that $h_C$ is translation by $c'$. By Lemma \ref{translation} and Table \ref{Table0}, we have $\alpha \circ {\rm ad}_{h_E} = {\rm ad}_{h_C} \circ \alpha$ if and only if the fixed points of $h_E$ on $E$ map to translations in $\alpha(G)$. This is Case \eqref{item: trivialM2}.

Finally, assume $G = \ZZ/4\ZZ \times \ZZ/2\ZZ$. Assume $M$ is non-trivial and, using Theorem \ref{Mainwithdetails} \eqref{item: Main6}, let $h = (h_E,h_C)$ be an automorphism mapping to a non-trivial element in $M(k)$. We may assume that $h_E \in \Aut_{E,O}$ is the involution and $h_C$ is a translation by one of the $2$-torsion points not contained in $\alpha(G)$. Observe that ${\rm ad}_{h_E}$ maps elements of order $4$ in $G$ to their inverses while ${\rm ad}_{h_C}$ maps the automorphism $\sigma$ of order $4$ in $\alpha(G) \cap \Aut_{C,O}$ to $\sigma \circ t_c$, where $c$ is the non-trivial $2$-torsion point in $\alpha(G)$. Hence, we have $\alpha \circ {\rm ad}_{h_E} \neq {\rm ad}_{h_C} \circ \alpha$. This contradiction shows that $M = \{1\}$ in this case. 
\end{proof}

\subsubsection{Characteristic $p = 3$}
By Bombieri and Mumford \cite[p.37]{BombieriMumford2}, the groups $G$ leading to bielliptic surfaces $X = (E \times C)/G$ are the six groups 
$$\ZZ/2\ZZ,\ZZ/3\ZZ,\ZZ/4\ZZ,\ZZ/6\ZZ, (\ZZ/2\ZZ)^2,\ZZ/4\ZZ \times \ZZ/2\ZZ.$$ The translation subgroup of $\alpha(G)$ is trivial in the first four of these cases, and isomorphic to $\ZZ/2\ZZ$ in the other two cases.

\begin{Lemma}\label{char3}
The non-trivial commutative subgroup schemes $H$ of $\Aut_{D,O}$, their fixed loci $D^H$, centralizers $C_{\Aut_{D,O}}(H)$ and normalizers $N_{\Aut_{D,O}}(H)$ are as in Table \ref{Table3}.
\begin{table}[!htb]
\centering
$$
\begin{array}{|c|c|c|c|c|c|} \hline
j(D) & \Aut_{D,O}&  H & D^H & C_{\Aut_{D,O}}(H) & N_{\Aut_{D,O}}(H) \\ \hline \hline
\neq 0 & \ZZ/2\ZZ & \ZZ/2\ZZ & (\ZZ/2\ZZ)^2 & \ZZ/2\ZZ & \ZZ/2\ZZ \\ \hline
0 & \ZZ/3\ZZ \rtimes \ZZ/4\ZZ &  
\begin{array}{c} \ZZ/2\ZZ \\ \ZZ/3\ZZ \\ \ZZ/4\ZZ \\ \ZZ/6\ZZ \end{array} & \begin{array}{c} (\ZZ/2\ZZ)^2 \\ \alpha_3 \\ \ZZ/2\ZZ \\ \{1\} \end{array} & \begin{array}{c} \ZZ/3\ZZ \rtimes \ZZ/4\ZZ \\ \ZZ/6\ZZ \\ \ZZ/4\ZZ \\ \ZZ/6\ZZ   \end{array} & \begin{array}{c} \ZZ/3\ZZ \rtimes \ZZ/4\ZZ \\ \ZZ/3\ZZ \rtimes \ZZ/4\ZZ  \\ \ZZ/4\ZZ \\ \ZZ/3\ZZ \rtimes \ZZ/4\ZZ    \end{array} \\ \hline
\end{array}
$$
\caption{$\Aut_{D,O}$ and its subgroups in characteristic $3$}
\label{Table3}
\end{table}
\end{Lemma}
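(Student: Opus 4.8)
The plan is to first determine the group $\Aut_{D,O}$ and then extract the three invariants $D^H$, $C_{\Aut_{D,O}}(H)$ and $N_{\Aut_{D,O}}(H)$ from it, one commutative subgroup at a time. If $j(D) \neq 0$, then $\Aut_{D,O} = \ZZ/2\ZZ$ is generated by the inversion $[-1]$, the only non-trivial commutative subgroup is the whole (abelian) group, and the first row is immediate from $D^{[-1]} = D[2] = (\ZZ/2\ZZ)^2$. If $j(D) = 0$ the curve is supersingular, and I would fix the model $y^2 = x^3 - x$ and exhibit the automorphisms fixing $O$ given by $[-1]\colon(x,y)\mapsto(x,-y)$, $\tau\colon(x,y)\mapsto(x+1,y)$, and $\sigma\colon(x,y)\mapsto(-x,iy)$ with $i^2=-1$. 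A direct substitution (using $(x+1)^3 = x^3+1$ in characteristic $3$) shows these preserve the equation, and one checks $\tau^3=1$, $\sigma^2=[-1]$, $\sigma^4=1$ and $\sigma\tau\sigma^{-1}=\tau^{-1}$, so they generate a copy of $\ZZ/3\ZZ\rtimes\ZZ/4\ZZ$. That this exhausts $\Aut_{D,O}$ is the classical determination of the automorphism group of the supersingular curve in characteristic $3$, for which I would cite \cite[Appendix A]{Silverman}.

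Granting the group structure, the centralizer and normalizer columns for $j(D)=0$ are pure finite group theory in $\ZZ/3\ZZ\rtimes\ZZ/4\ZZ$. Its unique involution is the central element $\sigma^2=[-1]$, so for $H=\ZZ/2\ZZ$ both centralizer and normalizer are the whole group. The subgroup $\langle\tau\rangle=\ZZ/3\ZZ$ is the unique, hence normal, Sylow $3$-subgroup, so its normalizer is the whole group, while its centralizer is $\langle\tau,\sigma^2\rangle=\ZZ/6\ZZ$ because $\sigma$ inverts $\tau$. A Sylow $2$-subgroup $\langle\sigma\rangle=\ZZ/4\ZZ$ is self-centralizing, and since it is not normal there are three conjugate Sylow $2$-subgroups, so its normalizer has index $3$ and therefore equals $\langle\sigma\rangle$. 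Finally $\langle\tau\sigma^2\rangle=\ZZ/6\ZZ$ has index $2$ and is normal, with centralizer $\ZZ/6\ZZ$ for the same reason as before. These four subgroups exhaust the commutative subgroups up to conjugacy, matching the rows of Table \ref{Table3}.

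It remains to compute the fixed subschemes $D^H=\bigcap_{h\in H}\ker(h-1)$, which are genuine subgroup schemes because each $h$ fixes $O$ and is therefore a group automorphism. The one step requiring real care, and the heart of the characteristic-$3$ statement, is $H=\langle\tau\rangle$: because $k$ has characteristic $3$ the group $k^\ast$ has no non-trivial cube root of unity, so the order-$3$ automorphism $\tau$ acts trivially on the one-dimensional tangent space $T_O D$; as $\tau\neq1$ the fixed scheme is non-reduced at $O$, and it has no further point since $x\mapsto x+1$ has no affine fixed point. Thus $\tau-1$ is an inseparable isogeny. Moreover $\tau$ is a primitive cube root of unity in $\mathrm{End}(D)$, so $\tau^2+\tau+1=0$; this yields both $(\tau-1)(\tau^2-1)=[3]$, whence $\deg(\tau-1)=3$ (the two factors have equal degree as $\tau^2=\tau^{-1}$, and $\deg[3]=9$), and the fact that every fixed point satisfies $3P=0$, so $D^{\langle\tau\rangle}\subseteq D[3]$. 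Hence $D^{\langle\tau\rangle}=\ker(\tau-1)$ is a connected order-$3$ subgroup scheme inside the local-local group scheme $D[3]$, so it is $\alpha_3$ and not $\mu_3$. The remaining entries are now classical: $D^{\langle\sigma\rangle}\subseteq\ker(\sigma^2-1)=D[2]$, and $\sigma$ permutes the nonzero $2$-torsion points by fixing $(0,0)$ and swapping $(\pm1,0)$, so $D^{\langle\sigma\rangle}=\ZZ/2\ZZ$; while $D^{\langle\tau\sigma^2\rangle}=D^{\langle\tau\rangle}\cap D[2]\subseteq D[3]\cap D[2]=\{1\}$. This infinitesimal fixed-locus analysis is the only part that goes beyond the classical étale theory, and assembling these computations yields every entry of Table \ref{Table3}.
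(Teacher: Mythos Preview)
Your proof is correct; the paper itself gives no argument for this lemma beyond the remark that ``all of this is well-known and elementary to check'' together with a reference to \cite[Section III.10 and Appendix A]{Silverman}. Your proposal supplies precisely the elementary verification the paper omits---an explicit Weierstrass model $y^2=x^3-x$, explicit generators for $\Aut_{D,O}\cong\ZZ/3\ZZ\rtimes\ZZ/4\ZZ$, the finite group theory for centralizers and normalizers, and the fixed-locus computations, including the crucial identification $D^{\langle\tau\rangle}=\alpha_3$ via the degree calculation $(\tau-1)(\tau^2-1)=[3]$ and the inseparability of $\tau-1$.
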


As in characteristic $\neq 2,3$, it is straightforward to calculate the centralizers of $G$ and $\alpha(G)$ and they are listed in Table \ref{bielliptic}.  

\begin{Lemma}
 The group $N$ is as in Table \ref{TableN3}.

\begin{table}[!h]
\centering
$$
\begin{array}{|c||c|c|c|c|c|c|} \hline
G & \ZZ/2\ZZ & \ZZ/3\ZZ & \ZZ/4\ZZ & \ZZ/6\ZZ  & (\ZZ/2\ZZ)^2 & \ZZ/4\ZZ \times \ZZ/2\ZZ \\ \hline 
N & \{1\} & \ZZ/2\ZZ & \{1\} &  \ZZ/2\ZZ  & \ZZ/2\ZZ & \ZZ/2\ZZ  \\ \hline
\end{array}
$$
\caption{The group $N$ in characteristic $3$}
\label{TableN3}
\end{table}
\end{Lemma}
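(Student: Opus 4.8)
The plan is to run the argument of the proof of Lemma~\ref{lem: N0}, but to replace the step that used commutativity of $\Aut_{C,O}$ by the explicit normalizer and centralizer data of Table~\ref{Table3}; this replacement is forced because for $j(C) = 0$ the group $\Aut_{C,O} = \ZZ/3\ZZ \rtimes \ZZ/4\ZZ$ is noncommutative. Since every group $G$ occurring in characteristic $3$ is \'etale, $\alpha(G)$ is a constant subgroup scheme, so the $k$-points of $C_{\Aut_C}(\alpha(G))$ agree with the centralizer of $A := \alpha(G)(k)$ in the abstract group $\Aut(C) = C \rtimes \Aut_{C,O}$. Thus $N = N_{\Aut(C)}(A)/C_{\Aut(C)}(A)$, and I work throughout with abstract groups, splitting into the two families according to whether $A$ contains a nontrivial translation.

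For $G \in \{\ZZ/2\ZZ, \ZZ/3\ZZ, \ZZ/4\ZZ, \ZZ/6\ZZ\}$ the group $A$ has trivial translation part; since $A$ is cyclic and generated by a finite-order automorphism with a fixed point, I may choose $O$ to be such a fixed point, which changes neither $\Aut_C$ nor $N$, so that $A =: H \subseteq \Aut_{C,O}$. For $h \in H$ and $\phi \in \Aut_{C,O}$, Equation~\eqref{eq: translation} gives $t_u \phi\, h\, (t_u\phi)^{-1} = t_{u - h'(u)} \circ h'$ with $h' := \phi h \phi^{-1}$; because $H$ has no translations, this lies in $H$ for all $h$ exactly when $\phi \in N_{\Aut_{C,O}}(H)$ and $u \in C^H(k)$, and equals $h$ exactly when moreover $\phi \in C_{\Aut_{C,O}}(H)$. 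Hence $N_{\Aut(C)}(H) = C^H(k) \rtimes N_{\Aut_{C,O}}(H)$ and $C_{\Aut(C)}(H) = C^H(k) \rtimes C_{\Aut_{C,O}}(H)$, so the common translation factor $C^H(k)$ cancels and $N \cong N_{\Aut_{C,O}}(H)/C_{\Aut_{C,O}}(H)$. Reading this off Table~\ref{Table3} yields $N = \{1\}$ for $H = \ZZ/2\ZZ$ (where $H = \langle -1 \rangle$ is central) and for $H = \ZZ/4\ZZ$, and $N \cong \ZZ/2\ZZ$ for $H = \ZZ/3\ZZ$ and $H = \ZZ/6\ZZ$, where $C_{\Aut_{C,O}}(H) = \ZZ/6\ZZ$ has index two in $N_{\Aut_{C,O}}(H) = \ZZ/3\ZZ \rtimes \ZZ/4\ZZ$; these last three cases force $j(C) = 0$.

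For $G \in \{(\ZZ/2\ZZ)^2, \ZZ/4\ZZ \times \ZZ/2\ZZ\}$ I write $A = \langle \xi \rangle \times \langle t_c \rangle$ with $c$ a nontrivial $2$-torsion point and $\xi \in \Aut_{C,O}$ of order $2$, respectively of order $4$ with $\xi^2 = -1$. Because the translations are normal in $\Aut_C$, conjugation preserves $A \cap C = \langle t_c \rangle$, so every normalizing element fixes $c$. When $\xi = -1$, a conjugation computation via Equation~\eqref{eq: translation} shows that $t_u\phi$ lies in $N_{\Aut(C)}(A)$ iff $\phi(c) = c$ and $2u \in \{0,c\}$, and in $C_{\Aut(C)}(A)$ iff moreover $2u = 0$; since $\phi$ then fixes both $0$ and $c$, the map $t_u\phi \mapsto 2u$ is a surjective homomorphism onto $\{0,c\}$ with kernel $C_{\Aut(C)}(A)$, giving $N \cong \ZZ/2\ZZ$ for both values of $j(C)$. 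When $\xi = \sigma$ has order $4$, the involution $\sigma^2 = -1$ is the unique nontrivial element of $2A$ and is therefore preserved by conjugation, which forces $2u = 0$ on any normalizing $t_u\phi$; since Table~\ref{Table3} gives $N_{\Aut_{C,O}}(\langle\sigma\rangle) = \langle\sigma\rangle = C_{\Aut_{C,O}}(\langle\sigma\rangle)$, the automorphism part contributes nothing, and $N$ is generated by translation by one of the two $2$-torsion points outside $\{O,c\}$ (which agree modulo $t_c \in A$), so again $N \cong \ZZ/2\ZZ$.

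I expect the last paragraph to be the main obstacle: one must check that the noncommutativity of $\Aut_{C,O}$ at $j(C) = 0$ produces no element of $N$ beyond the expected $2$-torsion translation. For $\ZZ/4\ZZ \times \ZZ/2\ZZ$ this is exactly where the equality $N_{\Aut_{C,O}}(\ZZ/4\ZZ) = C_{\Aut_{C,O}}(\ZZ/4\ZZ) = \ZZ/4\ZZ$ from Table~\ref{Table3} is needed, to rule out any contribution from the order-$3$ automorphisms; the remaining steps are the routine conjugation identities furnished by Equation~\eqref{eq: translation}, together with the observation that the possibly non-reduced fixed schemes $C^H$ enter only through $C^H(k)$, which cancels between numerator and denominator.
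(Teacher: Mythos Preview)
Your proof is correct and follows essentially the same approach as the paper: for the four cyclic $G$ with $\alpha(G)\subseteq\Aut_{C,O}$ you reduce $N$ to $N_{\Aut_{C,O}}(H)/C_{\Aut_{C,O}}(H)$ and read it off Table~\ref{Table3}, exactly as the paper does; for $(\ZZ/2\ZZ)^2$ and $\ZZ/4\ZZ\times\ZZ/2\ZZ$ you reproduce the argument of Lemma~\ref{lem: N0}. The only notable difference is that you make explicit the step---needed in characteristic~$3$ because $\Aut_{C,O}$ is nonabelian at $j(C)=0$---that the $\Aut_{C,O}$-component $\phi$ of a normalizing element must lie in $N_{\Aut_{C,O}}(\langle\sigma\rangle)=\langle\sigma\rangle$, whereas the paper subsumes this under ``the proof is the same as for Lemma~\ref{lem: N0}''.
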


\begin{proof}
If $\alpha(G)$ does not contain translations, then a translation in $\Aut_C$ normalizes $\alpha(G)$ if and only if it centralizes $\alpha(G)$ by Lemma \ref{translation}. Thus, in these cases, $N$ can be read off from the last two columns of Table \ref{Table3}. The proof of the two remaining cases is the same as for Lemma \ref{lem: N0}.
\end{proof}

\begin{Proposition}\label{prop: Mtrivial3}
The cases where $M$ is non-trivial are precisely the following:
\begin{enumerate}
\item $G = (\ZZ/2\ZZ)^2$ and $j(E) = 0$. In this case, $M \cong \ZZ/2\ZZ$.
\item $G \in \{ \ZZ/3\ZZ, \ZZ/6\ZZ\}$. In these cases, $M \cong \ZZ/2\ZZ$.
\end{enumerate}
\end{Proposition}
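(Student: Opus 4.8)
The plan is to run the same machine as in Proposition \ref{prop: trivialM}. Since $X$ is bielliptic, Theorem \ref{Mainwithdetails} \eqref{item: Main6} applies and identifies $M(k)$ with the set of compatible pairs $(h_E,h_C)$ modulo $C_{\Aut_E}(G)(k) \times C_{\Aut_C}(\alpha(G))(k)$; moreover $M$ is a subquotient of $N$. Reading off Table \ref{TableN3}, I first note that $N = \{1\}$ for $G \in \{\ZZ/2\ZZ, \ZZ/4\ZZ\}$, so $M = \{1\}$ there, and that for the remaining four groups $G \in \{\ZZ/3\ZZ, \ZZ/6\ZZ, (\ZZ/2\ZZ)^2, \ZZ/4\ZZ \times \ZZ/2\ZZ\}$ one has $N \cong \ZZ/2\ZZ$, hence $|M| \mid 2$. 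It then remains, case by case, to decide whether the non-trivial class of $N$ lifts to a compatible pair.

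For $G \in \{\ZZ/3\ZZ, \ZZ/6\ZZ\}$ I would produce the lift directly. Here $\alpha(G)$ contains no translations, so $\alpha(G) \subseteq \Aut_{C,O}$ with $j(C) = 0$ and $\Aut_{C,O} \cong \ZZ/3\ZZ \rtimes \ZZ/4\ZZ$ by Lemma \ref{char3}. Choosing $h_C$ of order $4$, conjugation ${\rm ad}_{h_C}$ acts as inversion on $\alpha(G)$ (this is the content of the entries $N_{\Aut_{D,O}}(H)/C_{\Aut_{D,O}}(H) \cong \ZZ/2\ZZ$ in Table \ref{Table3}). On the other factor the inversion $\iota_E = -1 \in \Aut_{E,O}$ inverts $G \subseteq E$, so ${\rm ad}_{\iota_E}$ is inversion on $G$. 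Because inversion is compatible with the isomorphism $\alpha$, the pair $(\iota_E, h_C)$ satisfies $\alpha \circ {\rm ad}_{\iota_E} = {\rm ad}_{h_C} \circ \alpha$, and since $h_C \notin C_{\Aut_C}(\alpha(G))$ it represents a non-trivial class of $M$. Hence $M \cong \ZZ/2\ZZ$ in both cases.

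For $G = (\ZZ/2\ZZ)^2$ I would argue as in Proposition \ref{prop: trivialM} \eqref{item: trivialM1}. Here $G = E[2]$, the translation part of $\alpha(G)$ is generated by a $2$-torsion point $c \in C$, and any non-trivial class of $M$ must be represented by $(h_E, h_C)$ with $h_C = t_{c'}$ for a $4$-torsion point $c'$ with $2c' = c$. A computation via Lemma \ref{translation} shows ${\rm ad}_{h_C}$ fixes $t_c$ and interchanges the two non-translation elements of $\alpha(G)$, so via $\alpha$ and the identification $G = E[2]$ such a pair exists if and only if some $h_E \in \Aut_{E,O}$ acts on $E[2]$ as the transposition fixing the point that $\alpha$ sends to $t_c$. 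If $j(E) \neq 0$, then $\Aut_{E,O} = \langle -1\rangle$ acts trivially on $E[2]$, no such $h_E$ exists, and $M = \{1\}$; if $j(E) = 0$, then $\Aut_{E,O} \cong \ZZ/3\ZZ \rtimes \ZZ/4\ZZ$ surjects onto $\Aut(E[2]) \cong S_3$, the required transposition is realized, and $M \cong \ZZ/2\ZZ$. Finally, for $G = \ZZ/4\ZZ \times \ZZ/2\ZZ$ I expect the obstruction argument of the last paragraph of Proposition \ref{prop: trivialM} to transfer verbatim: here $j(E) \neq 0$, so a putative lift has $h_E = -1$ (inverting the order-$4$ generator) and $h_C$ a translation by a $2$-torsion point outside $\alpha(G)$, whence ${\rm ad}_{h_C}$ sends the order-$4$ automorphism $\sigma \in \alpha(G) \cap \Aut_{C,O}$ to $\sigma t_c$ with $c$ the non-trivial $2$-torsion point of $\alpha(G)$. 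Compatibility would force $\sigma^{-1} = \sigma t_c$, i.e. $\sigma^2 = t_c^{-1}$, contradicting that $\sigma^2 = -1$ is not a translation; hence $M = \{1\}$.

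The main obstacle is the case $G = (\ZZ/2\ZZ)^2$ with $j(E) = 0$, where I must control the action of the order-$12$ automorphism group of the supersingular curve in characteristic $3$ on its $2$-torsion and verify that it realizes all of $S_3$, so that the transposition fixing the prescribed $2$-torsion point is available. Concretely, this amounts to checking that an order-$3$ automorphism acts as a $3$-cycle on the three non-zero $2$-torsion points while an order-$4$ element (with square $-1$) contributes a transposition; the former can be seen on a model such as $y^2 = x^3 - x$ via the substitution $x \mapsto x+1$. Once this surjectivity is in hand, the remaining cases are either immediate from Table \ref{TableN3} or straightforward translations of the characteristic-$0$ computations.
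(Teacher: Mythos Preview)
Your approach is essentially the paper's, and the arguments for $G \in \{\ZZ/3\ZZ,\ZZ/6\ZZ\}$ and $G = (\ZZ/2\ZZ)^2$ are correct; the paper phrases the latter as ``every non-trivial $2$-torsion point of $E$ is fixed by some automorphism of order $4$ in $\Aut_{E,O}$'', which is equivalent to your surjectivity onto $S_3$.

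There is one genuine slip. For $G = \ZZ/4\ZZ \times \ZZ/2\ZZ$ you assert $j(E) \neq 0$, but this is false: in characteristic $3$ the supersingular curve still has $E[4] \cong (\ZZ/4\ZZ)^2$, so $G$ embeds for $j(E)=0$ as well, and then $\Aut_{E,O} \cong \ZZ/3\ZZ \rtimes \ZZ/4\ZZ$ offers many candidates for $h_E$ beyond $-1$. The repair is exactly the observation you already make in the $(\ZZ/2\ZZ)^2$ case: since ${\rm ad}_{h_C}$ fixes $t_c$ and sends $\sigma \mapsto \sigma t_c$, compatibility forces $h_E$ to fix both $g_2$ and $2g_4$, hence all of $E[2]$; the kernel of $\Aut_{E,O} \to \Aut(E[2])$ being $\{\pm 1\}$, you are back to $h_E = -1$ and your contradiction $\sigma^{-1} = \sigma t_c$ goes through unchanged.
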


\begin{proof}
By Theorem \ref{Mainwithdetails} \eqref{item: Main5} and Table \ref{TableN3}, we may assume $G \in \{\ZZ/3\ZZ,\ZZ/6\ZZ , (\ZZ/2\ZZ)^2, \ZZ/4\ZZ \times \ZZ/2\ZZ\}$.
For $G \in \{(\ZZ/2\ZZ)^2,\ZZ/4\ZZ \times \ZZ/2\ZZ \}$, the proof is essentially the same as in Proposition \ref{prop: trivialM}. The only difference is that every non-trivial $2$-torsion point of $E$ is fixed by some automorphism of order $4$ in $\Aut_{E,O}$, so we do not have an extra condition as in Proposition \ref{prop: trivialM}.

For $G \in \{\ZZ/3\ZZ, \ZZ/6\ZZ\}$, it suffices to find a non-trivial element in $M$. 
By Lemma \ref{char3}, there is an element $h_C \in N_{\Aut_{C,O}}(\alpha(G))$ of order $4$ such that ${\rm ad}_{h_C}$ swaps the two generators of $\alpha(G)$. The inversion $h_E$ on $E$ induces the same action on $G$. By Theorem \ref{Mainwithdetails} \eqref{item: Main6}, this shows $M \cong \ZZ/2\ZZ$.
\end{proof}

\subsubsection{Characteristic $p = 2$}
By Bombieri and Mumford \cite[p.37]{BombieriMumford2}, the group schemes $G$ leading to bielliptic surfaces $X = (E \times C)/G$ are the six group schemes 
$$\ZZ/2\ZZ,\ZZ/3\ZZ,\ZZ/4\ZZ,\ZZ/6\ZZ, \mu_2 \times \ZZ/2\ZZ,(\ZZ/3\ZZ)^2.$$ The translation subgroup scheme of $\alpha(G)$ is trivial in the first four of these cases, and isomorphic to $\mu_2$ and $\ZZ/3\ZZ$, respectively, in the other two cases.

\begin{Lemma}\label{char2}
The non-trivial commutative subgroup schemes $H$ of $\Aut_{D,O}$, their fixed loci $D^H$, centralizers $C_{\Aut_{D,O}}(H)$ and normalizers $N_{\Aut_{D,O}}(H)$ are as in Table \ref{Table2}.
\begin{table}[!htb]
\centering
$$
\begin{array}{|c|c|c|c|c|c|} \hline
j(D) & \Aut_{D,O} &  H & D^H & C_{\Aut_{D,O}}(H) & N_{\Aut_{D,O}}(H) \\ \hline \hline
\neq 0 & \ZZ/2\ZZ & \ZZ/2\ZZ & \mu_2 \times \ZZ/2\ZZ & \ZZ/2\ZZ & \ZZ/2\ZZ \\ \hline
0 & Q_8 \rtimes \ZZ/3\ZZ &  
\begin{array}{c} \ZZ/2\ZZ \\ \ZZ/3\ZZ \\ \ZZ/4\ZZ \\ \ZZ/6\ZZ \end{array} & \begin{array}{c} M_2 \\ \ZZ/3\ZZ \\ \alpha_2 \\ \{1\} \end{array} & \begin{array}{c} Q_8 \rtimes \ZZ/3\ZZ \\ \ZZ/6\ZZ \\ \ZZ/4\ZZ \\ \ZZ/6\ZZ   \end{array} & \begin{array}{c} Q_8 \rtimes \ZZ/3\ZZ \\ \ZZ/6\ZZ  \\ Q_8 \\ \ZZ/6\ZZ   \end{array} \\ \hline
\end{array}
$$
\caption{$\Aut_{D,O}$ and its subgroups in characteristic $2$}
\label{Table2}
\end{table}
\end{Lemma}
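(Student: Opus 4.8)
The plan is to split the statement into a citation for the global structure of $\Aut_{D,O}$, a purely group-theoretic computation of centralizers and normalizers, and a short endomorphism-kernel analysis for the fixed loci. The key structural input is that, since $k$ is algebraically closed, $\Aut_{D,O}$ is a finite \'etale, hence constant, group scheme; its closed subgroup schemes are therefore exactly the subgroups of the finite group $\Aut_{D,O}(k)$, and the fixed loci $D^H$ may be computed as kernels of endomorphisms together with their scheme structure. I would first recall from \cite[Section III.10 and Appendix A]{Silverman} that in characteristic $2$ one has $\Aut_{D,O} \cong \ZZ/2\ZZ$, generated by the inversion $[-1]$, when $j(D) \neq 0$, and $\Aut_{D,O} \cong Q_8 \rtimes \ZZ/3\ZZ \cong {\rm SL}_2(\FF_3)$, the binary tetrahedral group of order $24$ with center $\langle [-1] \rangle$, when $j(D) = 0$. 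Because ${\rm SL}_2(\FF_3)$ contains a unique involution, every abelian subgroup is cyclic, so the non-trivial commutative subgroups are precisely the cyclic groups of orders $2,3,4,6$ appearing in the table: the central $\ZZ/2\ZZ$, a Sylow $3$-subgroup, a cyclic subgroup of the normal $Q_8$, and the product of the center with a Sylow $3$-subgroup.

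The centralizer and normalizer columns in the $j(D)=0$ row are then a routine computation inside ${\rm SL}_2(\FF_3)$. For $H = \ZZ/2\ZZ$ the claim is immediate since $H$ is the center. A Sylow $3$-subgroup has $n_3 = 4$ conjugates, so its normalizer has order $6$ and, containing the central involution, coincides with its centralizer $\ZZ/6\ZZ$; the same group is the centralizer and the normalizer of the order-$6$ subgroup, whose $3$-part is again a Sylow $3$-subgroup. For an order-$4$ subgroup $\langle i \rangle \subseteq Q_8$, every order-$3$ element permutes the three cyclic subgroups of $Q_8$ transitively and hence fails to normalize $\langle i \rangle$, so the centralizer is $\langle i \rangle = \ZZ/4\ZZ$ while the normalizer is all of $Q_8$. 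The $j(D) \neq 0$ row is trivial since the group is abelian.

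The geometric content is the computation of the fixed loci $D^H = \ker(\sigma - 1)$, and this is where I expect the only genuine subtlety. For the inversion, $D^{[-1]} = D[2]$, which is $\mu_2 \times \ZZ/2\ZZ$ when $j(D) \neq 0$ (the connected part $\ker F \cong \mu_2$) and the non-split local-local self-dual group scheme $M_2$ when $j(D) = 0$. For an order-$3$ automorphism $\sigma$, its eigenvalue on the cotangent space at $O$ is a primitive cube root of unity, hence $\neq 1$, so $\sigma - 1$ is separable; computing its degree as the reduced norm $N(\sigma - 1) = 3$ gives $D^H \cong \ZZ/3\ZZ$. The delicate case is an order-$4$ automorphism $\tau$: from $\tau^2 = [-1]$ and the fact that $[-1]$ acts trivially on the cotangent space in characteristic $2$, the eigenvalue of $\tau$ on the cotangent space is a square root of $1$, hence equal to $1$, so $\tau - 1$ is purely inseparable of degree $2$; since $\tau^2 - 1 = [-2]$ yields $\ker(\tau - 1) \subseteq D[2] = M_2$, and $M_2$ contains a unique order-$2$ subgroup scheme, namely the infinitesimal $\alpha_2 = \ker F$, we obtain $D^H \cong \alpha_2$. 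Finally, for the order-$6$ subgroup the fixed locus is contained in the intersection of the fixed loci of its order-$2$ and order-$3$ parts, which have coprime orders $4$ and $3$, so $D^H$ is trivial. The main obstacle throughout is the characteristic-$2$ bookkeeping of separability: one must track the action of each $\sigma$ on the (co)tangent space and the precise structure of $D[2]$ in order to distinguish $\alpha_2$, $\mu_2$, and the non-split group scheme $M_2$.
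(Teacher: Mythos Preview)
Your proposal is correct and, in fact, more detailed than what the paper itself provides: the paper does not prove this lemma but simply remarks that ``all of this is well-known and elementary to check'' and refers to \cite[Section III.10 and Appendix A]{Silverman}. Your explicit verification---splitting into the group-theoretic centralizer/normalizer computation inside ${\rm SL}_2(\FF_3)$ and the endomorphism-kernel analysis of $D^H$ via separability and degree---is exactly the kind of elementary check the paper alludes to, and each step (the Sylow count for the $\ZZ/3\ZZ$ normalizer, the $Q_8$ normalizer for $\ZZ/4\ZZ$, the cotangent eigenvalue argument distinguishing the separable $\sigma - 1$ from the inseparable $\tau - 1$, and the identification of $\ker(\tau - 1)$ as the unique length-$2$ subgroup scheme of $M_2$) is sound.
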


As before, it is straightforward to calculate the centralizers of $G$ and $\alpha(G)$ and they are listed in Table \ref{bielliptic}.  

\begin{Lemma}
 The group $N$ is as in Table \ref{TableN2}.

\begin{table}[!h]
\centering
$$
\begin{array}{|c||c|c|c|c|c|c|} \hline
G & \ZZ/2\ZZ  & \ZZ/3\ZZ & \ZZ/4\ZZ & \ZZ/6\ZZ & \mu_2 \times \ZZ/2\ZZ & (\ZZ/3\ZZ)^2   \\ \hline 
N & \{1\} & \{1\} & \ZZ/2\ZZ &  \{1\} & \{1\} & S_3  \\ \hline
\end{array}
$$
\caption{The group $N$ in characteristic $2$}
\label{TableN2}
\end{table}
\end{Lemma}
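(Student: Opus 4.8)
The plan is to mimic the proof of Lemma \ref{lem: N0} and of its characteristic $3$ analogue, organizing the computation according to whether the finite group $\alpha(G)(k)$ contains nontrivial translations. The recurring mechanism, supplied by Lemma \ref{translation} together with Equation \eqref{eq: translation}, is that a translation normalizes a rotation subgroup of $\Aut_{C,O}$ precisely when it centralizes it; consequently the translation contributions to numerator and denominator of $N$ agree and cancel, and $N$ is governed by the finite group $\Aut_{C,O}(k)$ acting by conjugation on the rotation part of $\alpha(G)(k)$.

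First I would dispatch the four cyclic cases $G \in \{\ZZ/2\ZZ, \ZZ/3\ZZ, \ZZ/4\ZZ, \ZZ/6\ZZ\}$, for which $\alpha(G)$ contains no translations and hence, after choosing $O$ to be a fixed point, is an \'etale cyclic subgroup $H \subseteq \Aut_{C,O}$; an element of order $3$, $4$ or $6$ forces $j(C) = 0$ so that $\Aut_{C,O} = Q_8 \rtimes \ZZ/3\ZZ$. Writing a normalizing automorphism as $t_s g$ and using Equation \eqref{eq: translation}, one finds $g \in N_{\Aut_{C,O}}(H)(k)$ and $s \in D^H(k)$, with the analogous statement (centralizer in place of normalizer) for $C_{\Aut_C}(\alpha(G))(k)$; the common translation factor $D^H(k)$ cancels, leaving $N \cong N_{\Aut_{C,O}}(H)(k)/C_{\Aut_{C,O}}(H)(k)$. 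Reading the last two columns of Table \ref{Table2} then gives $N = \{1\}$ for $H$ of order $2,3,6$ and $N \cong Q_8/(\ZZ/4\ZZ) \cong \ZZ/2\ZZ$ for $H \cong \ZZ/4\ZZ$.

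Next I would treat $G = \mu_2 \times \ZZ/2\ZZ$, the one genuinely new feature of characteristic $2$, where the translation part $\mu_2$ of $\alpha(G)$ is infinitesimal. Since $\mu_2(k)$ is trivial, $\alpha(G)(k) \cong \ZZ/2\ZZ$ is generated by a single involution $\sigma$, so $N_{\Aut(C)}(\alpha(G)(k)) = C_{\Aut(C)}(\sigma)$ as $\Aut(\ZZ/2\ZZ)$ is trivial. As every automorphism of $C$ centralizes $\mu_2$ automatically (because $\Aut(\mu_2) = \{1\}$ and translations commute), centralizing the group scheme $\alpha(G)$ is the same as centralizing $\sigma$ on $k$-points, whence $C_{\Aut_C}(\alpha(G))(k) = C_{\Aut(C)}(\sigma)$ as well and $N = \{1\}$.

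Finally, $G = (\ZZ/3\ZZ)^2$ is handled exactly as the corresponding case of Lemma \ref{lem: N0}: here $\alpha(G) = \langle t_c\rangle \times \langle\rho\rangle$ with $c \in C[3]$ fixed by the order-$3$ rotation $\rho$, forcing $j(C) = 0$, and conjugation preserves the translation line $\langle c\rangle$, so the induced map to $\Aut(\alpha(G)(k)) = {\rm GL}_2(\FF_3)$ lands in the Borel fixing $\langle c\rangle$. The characteristic polynomial $x^2 + x + 1 \equiv (x-1)^2 \pmod 3$ shows $\rho$ is unipotent on $C[3]$, and Table \ref{Table2} gives $N_{\Aut_{C,O}}(\langle\rho\rangle) = C_{\Aut_{C,O}}(\langle\rho\rangle) = \ZZ/6\ZZ$, so no automorphism inverts $\rho$ and the image is confined to the order-$6$ subgroup of the Borel with trivial lower-right entry; the involution $-1 \in Q_8$ and translation by a $3$-torsion point $c' \notin \langle c\rangle$ induce matrices satisfying the dihedral relation, generating this $S_3$. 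I expect this last step to be the main obstacle, since it requires pinning down the conjugation image inside ${\rm GL}_2(\FF_3)$ and verifying it is exactly $S_3$ rather than a larger subgroup of the Borel, the decisive input being the absence in $\Aut_{C,O}$ of any element inverting $\rho$.
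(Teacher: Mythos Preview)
Your argument is correct and follows the paper's strategy for the four cyclic cases and for $(\ZZ/3\ZZ)^2$; the paper simply says the latter is ``the same as for Lemma~\ref{lem: N0}'', whereas you spell out the image in $\GL_2(\FF_3)$ directly, which is fine.

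The one place you diverge from the paper is $G = \mu_2 \times \ZZ/2\ZZ$. The paper computes $N_{\Aut(C)}(\alpha(G)(k))$ explicitly---it is generated by $\alpha(G)(k)$ together with translation by the unique non-trivial $2$-torsion $k$-point---and then checks that this translation lies in $C_{\Aut_C}(\alpha(G))(k)$. Your route is more conceptual: since $\alpha(G)(k) = \langle\sigma\rangle$ has order $2$, normalizing it is the same as centralizing $\sigma$; and since $\mu_2$ is the connected component of $C[2]$ and $\Aut(\mu_2)$ is trivial, every element of $\Aut(C)$ already centralizes $\mu_2$, so centralizing the whole group scheme $\alpha(G)$ reduces to centralizing $\sigma$. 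Both approaches are short; yours avoids naming the extra $2$-torsion point at the cost of invoking the (easy) fact that $\Aut(\mu_2)=\{1\}$, while the paper's approach is more hands-on and makes the normalizer visible.
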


\begin{proof}
If $\alpha(G)$ does not contain translations, then a translation in $\Aut_C$ normalizes $\alpha(G)$ if and only if it centralizes $\alpha(G)$ by Lemma \ref{translation}. Thus, in these cases, $N$ can be read off from the last two columns of Table \ref{Table2}. 
For $G = (\ZZ/3\ZZ)^2$, the proof is the same as for Lemma \ref{lem: N0}.
Finally, if $G = \mu_2 \times \ZZ/2\ZZ$, then $N_{\Aut(C)}(\alpha(G)(k))$ is generated by $\alpha(G)(k)$ and the unique non-trivial $2$-torsion point in $C(k)$ by the same argument as in the proof of Lemma \ref{translation}. Translation by this $2$-torsion point commutes with $\alpha(G)$, hence $N$ is trivial.
\end{proof}

\begin{Proposition}\label{prop: Mtrivial2}
The cases where $M$ is non-trivial are precisely the following:
\begin{enumerate}
\item $G = (\ZZ/3\ZZ)^2$ and $j(E) = 0$. In this case, $M = \ZZ/3\ZZ$.
\item $G = \ZZ/4\ZZ$. In this case, $M = \ZZ/2\ZZ$.
\end{enumerate}
\end{Proposition}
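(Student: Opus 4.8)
The plan is to follow exactly the structure established in Proposition \ref{prop: trivialM} and Proposition \ref{prop: Mtrivial3}, using Theorem \ref{Mainwithdetails} \eqref{item: Main5} together with Table \ref{TableN2} as the initial filter. By part \eqref{item: Main5}, $M(k)$ is a subquotient of $N$, so the only candidates for a non-trivial $M$ are the group schemes $G$ for which $N \neq \{1\}$ in Table \ref{TableN2}, namely $G = \ZZ/4\ZZ$ (where $N \cong \ZZ/2\ZZ$) and $G = (\ZZ/3\ZZ)^2$ (where $N \cong S_3$). For every other group scheme in the characteristic-$2$ list one concludes immediately that $M = \{1\}$. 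Thus the proposition reduces to analyzing these two cases, and in each the task is to decide which elements of $N$ actually survive to $M$, using the criterion of Theorem \ref{Mainwithdetails} \eqref{item: Main6}, whose hypothesis is guaranteed for bielliptic surfaces by the final sentence of that theorem.

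For $G = (\ZZ/3\ZZ)^2$ I expect the argument to be essentially identical to Case \eqref{item: trivialM2} of Proposition \ref{prop: trivialM}: a lift $h = (h_E, h_C)$ of a non-trivial element of $M(k)$ must satisfy $\alpha \circ {\rm ad}_{h_E} = {\rm ad}_{h_C} \circ \alpha$. Testing $h_C$ equal to the involution in $\Aut_{C,O}$ shows, via a fixed-point count of ${\rm ad}_{h_E}$ versus ${\rm ad}_{h_C}$ on the $3$-torsion, that no involution can contribute; one is left with $h_C$ a translation by a $3$-torsion point $c' \notin \alpha(G)$ and $h_E \in \Aut_{E,O}$ of order $3$, which forces $j(E) = 0$. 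Invoking Lemma \ref{translation} and Table \ref{Table2} then identifies $M \cong \ZZ/3\ZZ$ exactly when the fixed points of $h_E$ on $E$ map to translations in $\alpha(G)$. A subtle point here is that, unlike in the $\ZZ/4\ZZ \times \ZZ/2\ZZ$ situation, the case $j(E) = 0$ with $(\ZZ/3\ZZ)^2$ is listed unconditionally in Table \ref{bielliptic} with $M = \ZZ/3\ZZ$, so I would need to check that in characteristic $2$ the relevant $3$-torsion coincidence is automatic — this is where I expect the most care to be required, since it differs from the "dagger" dichotomy in characteristic $0$.

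For $G = \ZZ/4\ZZ$, where $\alpha(G)$ contains no translations, the group $N \cong \ZZ/2\ZZ$ comes entirely from the normalizer/centralizer discrepancy recorded in the last two columns of Table \ref{Table2}: for $j(C) = 0$ one has $N_{\Aut_{C,O}}(\ZZ/4\ZZ) = Q_8$ strictly containing $C_{\Aut_{C,O}}(\ZZ/4\ZZ) = \ZZ/4\ZZ$. I would produce an explicit $h_C \in Q_8 \setminus \ZZ/4\ZZ$ whose conjugation ${\rm ad}_{h_C}$ inverts the generator of $\alpha(\ZZ/4\ZZ)$, pair it with $h_E = $ inversion on $E$ (which inverts the generator of $G$), verify $\alpha \circ {\rm ad}_{h_E} = {\rm ad}_{h_C} \circ \alpha$, and conclude $M \cong \ZZ/2\ZZ$ by Theorem \ref{Mainwithdetails} \eqref{item: Main6}. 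This mirrors the $G \in \{\ZZ/3\ZZ,\ZZ/6\ZZ\}$ argument of Proposition \ref{prop: Mtrivial3}.

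The main obstacle I anticipate is not any single case but correctly handling the interaction between the non-étale possibilities for $\alpha(G)$ in characteristic $2$ and the criterion of part \eqref{item: Main6}. Concretely, for $G = \mu_2 \times \ZZ/2\ZZ$ the scheme $\alpha(G)$ has an infinitesimal translation part, so I must confirm — as the proof of the $N$-table already records — that $N$ collapses to $\{1\}$ and hence no subtlety enters $M$; and for $G = \ZZ/4\ZZ$ I must ensure the explicit $Q_8$-element really lifts $G$-equivariantly rather than only rationally. The verifications of the conjugation identity $\alpha \circ {\rm ad}_{h_E} = {\rm ad}_{h_C} \circ \alpha$ are routine group-law computations that I would leave to the reader, as the paper does elsewhere.
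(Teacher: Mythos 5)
Your proposal is correct and takes essentially the same route as the paper: reduce to $G \in \{(\ZZ/3\ZZ)^2, \ZZ/4\ZZ\}$ via Theorem \ref{Mainwithdetails} \eqref{item: Main5} and Table \ref{TableN2}, rerun the argument of Proposition \ref{prop: trivialM} for $(\ZZ/3\ZZ)^2$, and for $\ZZ/4\ZZ$ exhibit the pair consisting of inversion on $E$ and an order-$4$ element of $Q_8$ outside $\alpha(G)$. The point you flag as needing care --- that the special $3$-torsion condition of Proposition \ref{prop: trivialM} becomes automatic --- is precisely the observation the paper makes: for supersingular $E$ in characteristic $2$, every non-trivial $3$-torsion point is fixed by some automorphism of order $3$.
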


\begin{proof}
By Theorem \ref{Mainwithdetails} \eqref{item: Main5} and Table \ref{TableN2}, we may assume $G \in \{(\ZZ/3\ZZ)^2,\ZZ/4\ZZ\}$.
The proof for $G =(\ZZ/3\ZZ)^2$ is the same as in Proposition \ref{prop: trivialM} with the only difference that every non-trivial $3$-torsion point in $E$ is fixed by some automorphism of order $3$, so we do not have an extra condition as in Proposition \ref{prop: trivialM}.

If $G = \ZZ/4\ZZ$, consider the automorphism $h = (h_E,h_C)$ of $E \times C$ where $h_C \in N_{\Aut_{C,O}}(\alpha(G))$ is of order $4$ and not contained in $\alpha(G)$ and $h_E$ is the inversion involution on $E$. 
By Lemma \ref{lem: nonconnecteddiagonal} \eqref{item: nonconnecteddiagonal3}, $h$ normalizes the $G$-action on $E \times C$ and, by Proposition \ref{prop: normalizer}, induces a non-trivial element of $M$. Hence, we have $M \cong \ZZ/2\ZZ$.
\end{proof}
 


\subsection{Quasi-bielliptic surfaces}
\label{sec: quasi-bielliptic}
In the case of quasi-bielliptic surfaces, $E$ is still smooth, so the group $C_{\Aut_E}(G)/E$ can be calculated using the results of the previous section. We will thus focus on the calculation of $C_{\Aut_C}(\alpha(G))/\alpha(G)$ and $M$. We identify the smooth locus of $C$ with $\mathbb{A}^1 = \Spec k[t]$ and use the description of automorphisms of $\mathbb{A}^1$ coming from $C$ given in \cite[Proposition 6]{BombieriMumford3}.

\subsubsection{Characteristic $p = 3$}\label{quellchar3}

By \cite[Proposition 6]{BombieriMumford3} the $T$-valued automorphisms of $\mathbb{A}^1$ coming from $C$ are of the form
\begin{equation}\label{quell3eqn}
t \mapsto bt + c + dt^3
\end{equation}
with $b \in \GG_m(T)$, $c,d \in \GG_a(T)$ and $d^3 = 0$. By \cite[p. 214]{BombieriMumford3}, the subgroup schemes $\alpha(G)$ leading to quasi-bielliptic surfaces are the following:
\begin{enumerate}[label=(\alph*)]
\item $\mu_3$: $t \mapsto at + (1-a)t^3$ with $a^3 = 1$
\item $\mu_3 \times \ZZ/2\ZZ: \mu_3$ as in (a) and $t \mapsto \pm t$.
\item $\mu_3 \times \ZZ/3\ZZ: \mu_3$ as in (a) and $t \mapsto t + i$ with $i^3 = i$
\item $\alpha_3: t \mapsto t + at^3$ with $a^3 = 0$
\item $\alpha_3 \times \ZZ/2\ZZ: \alpha_3$ as in (d) and $t \mapsto \pm t$
\end{enumerate}

\begin{Remark} \label{rem: char3fall}
As noted in \cite[p.489]{Langquell}, Case (f) of \cite[p. 214]{BombieriMumford3} does not exist, because the group scheme given there is isomorphic to $\alpha_9$ and thus not a subscheme of an elliptic curve.
\end{Remark}
 
Now, let us calculate $C_{\Aut_C}(\alpha(G))$ and $M$ for the surfaces in Case (a),...,(e). To this end, we take a $k$-scheme $T$ and arbitrary elements $g \in \alpha(G)(T)$ as in the above list and $h \in \Aut_C(T)$ as in \eqref{quell3eqn}. One can check that the inverse of $h$ is given by
$$
t \mapsto b^{-1}t + b^{-4}(c^3d - b^3c) - b^{-4}dt^3
$$

\begin{enumerate}[label=(\alph*),leftmargin=7mm]
\item We calculate
$$
h \circ g \circ h^{-1}: t \mapsto at + (1-a)b^{-1}(c^3-c) + (1-a)(b^2 - b^{-1}d)t^3.
$$
Thus, $h$ normalizes $\alpha(G)$ if and only if it centralizes $\alpha(G)$ if and only if $c^3 = c$ and $b^3 = d + b$. Taking the cube of the second equation, we obtain $b^6 = 1$. Thus, the centralizer of $\alpha(G)$ is the group scheme of maps
$$
t \mapsto bt + i + (b^3 - b)t^3 \text{ with } b^6 = 1 \text{ and } i^3 = i.
$$
This group scheme is isomorphic to $\mu_3 \times S_3$. Therefore, we have $C_{\Aut_C}(\alpha(G))/\alpha(G) \cong S_3$. 

To calculate $M$, first note that $|M| \mid 2$, since $E$ and $E'$ are ordinary, $M$ is a subquotient of $\Aut_{E'}/((f_E)_* C_{\Aut_E}(G))$ by Theorem \ref{Mainwithdetails} \eqref{item: Main5}, and $\Aut_{E'}^\circ \subseteq ((f_E)_* C_{\Aut_E}(G))$. If $M$ is non-trivial, then it can be represented by an automorphism $g \in \Aut(X)$ that induces the inversion involution on $E'$. This involution can be lifted to $E$, hence $g$ lifts to an automorphism of $E \times C$. However, by the above calculations there is no element of $\Aut(C)$ that acts as an inversion on $\alpha(G)$. So, Theorem \ref{Mainwithdetails} \eqref{item: Main6} shows that $M$ is trivial.
\vspace{2mm}

\item
Since $\mu_3$ is the identity component of $\mu_3 \times \ZZ/2\ZZ$, the normalizer of $\mu_3 \times \ZZ/2\ZZ$ in $\Aut_C$ is contained in the normalizer of $\mu_3$ in $\Aut_C$. By Case (a), the latter is isomorphic to $\mu_3 \times S_3$. Thus, $N_{\Aut_C}(\alpha(G))$ equals the normalizer of $\mu_3 \times \ZZ/2\ZZ$ in $\mu_3 \times S_3$, hence $N_{\Aut_C}(\alpha(G)) = C_{\Aut_C}(\alpha(G)) = \alpha(G)$. By the same argument as in (a), we also have $M = \{1\}$.
\vspace{2mm}

\item
Similar to case (b), we obtain that $C_{\Aut_C}(\alpha(G))/\alpha(G) = \{1\}$ and $M = \{1\}$.
\vspace{2mm}

\item We calculate
$$
h \circ g \circ h^{-1}: t \mapsto t + ab^{-1}c^3 + ab^2t^3.
$$
Thus, $h$ normalizes $\alpha(G)$ if and only if $c^3 = 0$, and it centralizes $\alpha(G)$ if and only if additionally \mbox{$b^2 = 1$} holds. Thus, $C_{\Aut_C}(\alpha(G))$ is a semi-direct product $\alpha_3^2 \rtimes \ZZ/2\ZZ$ and $N_{\Aut_C}(\alpha(G))$ is a semi-direct product $(\alpha_3)^2 \rtimes \GG_m$.
In particular, we have $C_{\Aut_C}(\alpha(G))/\alpha(G) \cong \alpha_3 \rtimes \ZZ/2\ZZ$.

Next, we calculate $M$. Using  Lemma \ref{translation} and Lemma \ref{char3}, one can check that $C_{\Aut_E}(G)/E \cong \ZZ/3\ZZ$.
Thus, there is an isomorphism $\Aut_{E'}/((f_E)_* C_{\Aut_E}(G)) \cong \Aut_{E',O}/(\ZZ/3\ZZ) \cong \ZZ/4\ZZ$, where we use the structure of $\Aut_{E',O}$ recalled in Lemma \ref{char3}. So, by Theorem \ref{Mainwithdetails} \eqref{item: Main5}, $M$ is a subquotient of $\ZZ/4\ZZ$.

Choose any automorphism $h_E \in \Aut_{E,O}$ of order $4$. Since $\alpha_3 \subseteq E$ is the kernel of Frobenius, it is preserved by $h_E$. Moreover, by Lemma \ref{translation} and Lemma \ref{char3}, the centralizer of $\alpha_3$ in $\Aut_{E,O}$ has order $3$, so conjugation by $h_E$ induces an automorphism of $\alpha_3$ of order $4$. By the calculations of the first paragraph, we have a surjection $N_{\Aut_C}(\alpha(G)) \to \Aut_{\alpha(G)} \cong \mathbb{G}_m$, hence we can find an $h_C \in N_{\Aut_C}(\alpha(G))(k)$ such that 
$h = (h_E,h_C) \in N_{\Aut_E \times \Aut_C}(G)(k)$ by Lemma \ref{lem: nonconnecteddiagonal} \eqref{item: nonconnecteddiagonal3}. By Proposition \ref{prop: normalizer}, $h$ descends to an automorphism of $X$ that induces an element of order $4$ in $M$. Therefore, we have $M \cong \ZZ/4\ZZ$.
\vspace{2mm}

\item Let $g: t \mapsto - t$. Then,
$$
h \circ g \circ h^{-1}: t \mapsto -t + b^{-1}c - b^{-4}c^3d.
$$
Since $\alpha_3$ is the identity component of $\alpha_3 \times \ZZ/2\ZZ$, we can use the results of (d) to deduce that $h$ normalizes $\alpha(G)$ if and only if $c = 0$ and it centralizes $\alpha(G)$ if and only if additionally $b^2 = 1$. Thus, we get $C_{\Aut_C}(\alpha(G)) \cong \alpha_3 \times \ZZ/2\ZZ$ and the normalizer of $\alpha(G)$ is $N_{\Aut_C}(\alpha(G)) \cong \alpha_3 \rtimes \GG_m$. In particular, $C_{\Aut_C}(\alpha(G))/\alpha(G) = \{1\}$.

Since the automorphism $g$ generates the group $\alpha(G)(k)$, the calculation of the previous paragraph also shows that $N_{\Aut(C)}(\alpha(G)(k)) = \GG_m(k)$. Thus, $M$ is isomorphic to a subquotient of $\GG_m(k)$ by Theorem \ref{Mainwithdetails} \eqref{item: Main5} and, in particular, the order of $M$ is prime to $3$. By the same theorem, $M$ is also a subquotient of $\Aut_{E'}/((f_E)_* C_{\Aut_E}(G))$, which is isomorphic to $\ZZ/3\ZZ \rtimes \ZZ/4\ZZ$ since $C_{\Aut_E}(G) \cong E$ in the current case. Hence, $M$ is a subquotient of $\ZZ/4\ZZ$. Using the same construction as in (d), one can show that $M \cong \ZZ/4\ZZ$.
\vspace{2mm}
\end{enumerate}

\subsubsection{Characteristic $p = 2$} \label{quellchar2}
By \cite[Proposition 6]{BombieriMumford3} the $T$-valued automorphisms of $\mathbb{A}^1$ coming from $C$ are of the form
\begin{equation}\label{quell2eqn}
t \mapsto bt + c + dt^2 + et^4
\end{equation}
with $b \in \GG_m(T)$, $c,d,e \in \GG_a(T)$ and $d^4 = e^2 = 0$. The subgroup schemes $\alpha(G)$ leading to quasi-bielliptic surfaces are the following, where $\lambda \in k$:
\begin{enumerate}[label=(\alph*)]
\item $\mu_2$: $t \mapsto at + \lambda(a+1)t^2 + (a+1)t^4$ with $a^2 = 1$.
\item $\mu_2 \times \ZZ/3\ZZ: \mu_2$ as in (a) with $\lambda = 0$ and $t \mapsto \omega t$, where $\omega^3 = 1$.
\item $\mu_2 \times \ZZ/2\ZZ: \mu_2$ as in (a) and $t \mapsto t + \zeta$, where $\zeta$ is a fixed root of $x^3 + \lambda x + 1$.
\item $\mu_4: t \mapsto at + (a+a^2)t^2 + (1+a^2)t^4$ with $a^4 = 1$
\item $\mu_4 \times \ZZ/2\ZZ: \mu_4$ as in (d) and $t \mapsto t + 1$.
\item $\alpha_2: t \mapsto t + \lambda a t^2 + at^4$ with $a^2 = 0$, and with $\lambda \in \{0,1\}$.
\item $\alpha_2 \times \ZZ/3\ZZ: \alpha_2$ as in (f) with $\lambda = 0$ and $\ZZ/3\ZZ$ as in (b)
\item $M_2: t \mapsto t + a + \lambda a^2 t^2 + a^2 t^4$ with $a^4  =0$, and with $\lambda \neq 0$.
\end{enumerate}

\begin{Remark} \label{rem: char2lambda}
In \cite[p. 214]{BombieriMumford3}, Bombieri and Mumford do not give restrictions on the parameter $\lambda \in k$ in Case (f). However, all the $\alpha_2$-actions with $\lambda \neq 0$ described by them are conjugate, so we may assume $\lambda \in \{0,1\}$. For more details, we refer the reader to the discussion of Case (f) below.
\end{Remark}

\begin{Remark}\label{rem: M2}
To see that the group scheme in Case (h) is indeed $M_2$, denote the transformation in Case (h) associated to $z_i$ with $z_i^4 = 0$ by $t_{z_i}$. Observe that $t_{z_1} \circ t_{z_2} = t_{z_1 + z_2 + \lambda z_1^2 z_2^2}$. So, if $G = \Spec k[z]/z^4$ is the group scheme in Case (h), then its co-multiplication is given by $$z \mapsto z_1 \otimes 1 + 1 \otimes z_2 + \lambda z_1^2 \otimes z_2^2.$$
Consider the supersingular elliptic curve $E$ with affine Weierstrass equation $y^2 + \lambda y = x^3$ and set $z = x/y,w = 1/y$, so that the equation becomes $z^3 = w + \lambda w^2$. Then, the $2$-torsion subscheme $M_2$ of $E$ is the subscheme given by $z^4 = w^2 = 0$, and thus $w = z^3$. By \cite[p.120]{Silverman} the co-multiplication on $k[z]/z^4$ induced by the group structure on $E$ is precisely the one described above. Hence, we have $G = M_2$.

For later use, we note that by \cite[Appendix A, Proposition 1.2]{Silverman}, the group of automorphisms of $E$ preserving $w = z = 0$ is given by the substitutions $x \mapsto b^2x + c^2, y \mapsto y + b^2cx +d $ with $b^3 = 1, c^4 + \lambda c = 0$ and $d^2 + \lambda d + c^6 = 0$. In particular, they act on $k[z]/z^4$ as
$$
z \mapsto \frac{b^2x + c^2}{ y + b^2cx +d} = \frac{b^2z + c^2w}{1+ b^2cz + dw} = (b^2z + c^2z^3)(1 + b^2cz + dz^3)^3 = b^2z + bcz^2.
$$

In particular, if we think of the substitutions in Case (h) above as defining a homomorphism of group schemes $E \supseteq M_2 \overset{\alpha}{\to} \Aut_C$, then precomposing $\alpha$ with ${\rm ad}_{h_E}$ where $h_E \in \Aut_{E,O}$ is as described in the previous paragraph, then $\alpha \circ {\rm ad}_{h_E}$ corresponds to $M_2$ acting on $C$ as 
$$
t \mapsto t + (b^2a + bca^2) + b \lambda a^2t^2 + ba^2t^4.
$$
\end{Remark}
\vspace{5mm}
 
Now, we are prepared to calculate $C_{\Aut_C}(\alpha_C)$ and $M$ in Cases (a),...,(h). As in characteristic $3$, we take a $k$-scheme $T$ and arbitrary elements $g \in \alpha(G)(T)$ as in the above list and $h \in \Aut_C(T)$ as in \eqref{quell2eqn}. One can check that the inverse of $h$ is given by
$$
t \mapsto b^{-1}t + b^{-7}(b^6c + b^2c^4e + b^4c^2d + c^4d^3) + b^{-3} d t^2 + b^{-7}(d^3 + b^2e)t^4.
$$

\begin{enumerate}[label=(\alph*),leftmargin=7mm]
\item We calculate
\begin{eqnarray*}
h \circ g \circ h^{-1}: & t \mapsto  & at + (a+1)b^{-1}(c + \lambda c^2 + c^4)  + (a+1)(b^{-1}d + \lambda b)t^2 \\ & & + (a+1)(b^{-1}e + \lambda b^{-1} d^2 + b^3)t^4.
\end{eqnarray*}
Thus, $h$ normalizes $\alpha(G)$ if and only if it centralizes $\alpha(G)$ if and only if
\begin{eqnarray}
c^4 + \lambda c^2 + c &=& 0, \nonumber \\
d &=& \lambda (b^2 + b), \text{ and} \label{eq: secondequation} \\
e &=& b^4 + b + \lambda^3 (b^4 + b^2) \label{eq: thirdequation}.
\end{eqnarray}

If $\lambda \neq 0$, the fourth power of \eqref{eq: secondequation} yields $b^4 = 1$, while the square of \eqref{eq: thirdequation} yields $b^6 = 1$, so we have $b^2 = 1$. Hence, in this case $C_{\Aut_C}(\alpha(G))$ is the group scheme of maps
$$
t \mapsto bt + c + \lambda(1 + b)t^2 + (1+b)t^4 \text{ with } b^2 = 1 \text{ and } c^4 + \lambda c^2 + c = 0 ,
$$
which is isomorphic to $(\ZZ/2\ZZ)^2 \times \mu_2$ since $c^4 + \lambda c^2 + c$ has $4$ distinct roots. Therefore, we have $C_{\Aut_C}(\alpha(G))/\alpha(G) \cong (\ZZ/2\ZZ)^2$.

If $\lambda = 0$, then $d = 0$, and the square of \eqref{eq: thirdequation} yields $b^6 = 1$. Thus, the centralizer of $\alpha(G)$ is the group scheme of maps
$$
t \mapsto bt + c + (b + b^4)t^4 \text{ with }  b^6 = 1 \text{ and } c^4 = c,
$$
which is isomorphic to $A_4 \times \mu_2$. We deduce that $C_{\Aut_C}(\alpha(G))/\alpha(G) \cong A_4$.

In both cases $\lambda \neq 0$ and $\lambda = 0$, note that $\Aut_{E'} = (f_E)_* C_{\Aut_E}(G)$, so $M = \{1\}$ follows immediately from Theorem \ref{Mainwithdetails} \eqref{item: Main5}.

\vspace{2mm}

\item
Since $\mu_2$ is the identity component of the group scheme $\ZZ/3\ZZ \times \mu_2$, it suffices to calculate the normalizer of $\ZZ/3\ZZ \times \mu_2$ in $A_4 \times \mu_2$, which is equal to its centralizer and both are equal to $\ZZ/3\ZZ \times \mu_2$. In particular, $C_{\Aut_C}(\alpha(G))/\alpha(G) = \{1\}$. To see that $M = \{1\}$, one can use the same arguments as in Case (a) in characteristic $3$ to show that the action of $M$ lifts to $E \times C$. Since $N_{\Aut_C}(\alpha(G)) = C_{\Aut_C}(\alpha(G))$, Theorem~\ref{Mainwithdetails}~\eqref{item: Main6} shows that $M$ is trivial.

\vspace{2mm}

\item
We take the centralizer of $\ZZ/2\ZZ \times \mu_2$ in $(\ZZ/2\ZZ)^2 \times \mu_2$ if $\lambda \neq 0$ and in $A_4 \times \mu_2$ if $\lambda = 0$. Both are equal to the normalizer and also equal to $(\ZZ/2\ZZ)^2 \times \mu_2$. Thus, $C_{\Aut_C}(\alpha(G))/\alpha(G) \cong \ZZ/2\ZZ$. As in Case (a), we have $M = \{1\}$.

\vspace{2mm}

\item We calculate
\begin{eqnarray*}\label{complicatedeqn}
h \circ g \circ h^{-1}: t & \longmapsto &  at + (a+1)b^{-1}\left(c + ac^2 + (a+1)(b^{-2}c^2d + b^{-2}c^4d + c^4)\right)  + (a+a^2)(b^{-1}d + b)t^2 \\
&  &+ (a+1)\left(b^{-1}e + a b^{-1}d^2 + (a+1)(b^{-3}d^3 + bd + b^3)\right)t^4.
\end{eqnarray*}
Thus, $h$ normalizes $\alpha(G)$ if and only if it centralizes $\alpha(G)$. For $h$ to centralize the subgroup scheme where $a^2 = 1$, we obtain the conditions
\begin{eqnarray*}
c + c^2 &=& 0, \\
d &=& b^2 + b, \text{ and} \\
e &=& b^4 + b^2.
\end{eqnarray*}
Since $d^4  = 0$, this implies $b^4  = 1$. Plugging these conditions back into the equation for $h \circ g \circ h^{-1}$, it turns out that the subgroup scheme of transformations satisfying these conditions centralizes all of $\alpha(G)$.
Therefore, the centralizer $C_{\Aut_C}(\alpha(G))$ is given by the group scheme of maps
$$
t \mapsto bt + c + (b + b^2)t^2 + (1 + b^2)t^4 \text{ with } b^4 = 1 \text{ and } c \in \{0,1\},
$$
which is isomorphic to $\mu_4 \times \ZZ/2\ZZ$. Therefore, $C_{\Aut_C}(\alpha(G))/\alpha(G) \cong \ZZ/2\ZZ$. By the same argument as in Case (b), we have $M = \{1\}$.

\vspace{2mm}

\item 
Since $\mu_4$ is the identity component of $\mu_4 \times \ZZ/2\ZZ$, we can use the computations of (d) to immediately conclude that centralizer and normalizer of $\alpha(G)$ are both equal to $\mu_4 \times \ZZ/2\ZZ$ and thus $C_{\Aut_C}(\alpha(G))/\alpha(G)= \{1\}$. Also, $M = \{1\}$ follows by the same argument as in Case (b).

\vspace{2mm}

\item We calculate
\begin{eqnarray*}
h \circ g \circ h^{-1}: t \mapsto t + ab^{-1}(\lambda c^2 + c^4) + \lambda ab t^2 + a (\lambda b^{-1}d^2 + b^3)t^4.
\end{eqnarray*}
This shows that all the $\alpha_2$-actions with $\lambda \neq 0$ are conjugate to the one with $\lambda = 1$ by conjugating with the map $t \mapsto \sqrt{\lambda}t$. Hence, we may assume $\lambda \in \{0,1\}$.

Suppose $\lambda = 1$. Then, $h$ normalizes $\alpha(G)$ if and only if it satisfies the conditions
\begin{eqnarray}
c^2 + c^4 &=& 0, \text{ and} \nonumber \\
d^2 &=& b^4 + b^2 \label{eq: secondequationalpha}.
\end{eqnarray}
Squaring \eqref{eq: secondequationalpha}, we get $b^4 = 1$. We also note that $h$ centralizes $\alpha(G)$ if and only if additionally $b = 1$. Therefore, the normalizer of $\alpha(G)$ is the group scheme of maps
$$
t \mapsto bt + c + dt^2 + et^4 \text{ with } b^4 = 1, c^4 = c^2, d^2 = b^4 + b^2, \text{ and } e^2 = 0,
$$
which is isomorphic to a semi-direct product $(\alpha_2^3 \rtimes \ZZ/2\ZZ) \rtimes \mu_4$ and the centralizer of $\alpha(G)$ is isomorphic to a semi-direct product $\alpha_2^3 \rtimes \ZZ/2\ZZ$. Hence, we have $C_{\Aut_C}(\alpha(G))/\alpha(G) \cong \alpha_2^2 \rtimes \ZZ/2\ZZ$. To calculate $M$, note that $\Aut_{E'}/((f_E)_* C_{\Aut_E}(G)) \cong \ZZ/3\ZZ$, so $|M| \mid 3$ by Theorem \ref{Mainwithdetails} \eqref{item: Main5}. Since $E \to E'$ is purely inseparable we can lift the $M$-action from $E'$ to $E$ and hence to $E \times C$, where it normalizes $G$. Since $N_{\Aut_C}(\alpha(G))/C_{\Aut_C}(\alpha(G))(k) \cong \mu_4(k)$ is trivial, this shows that $M = \{1\}$.

If $\lambda = 0$, then $h$ normalizes $\alpha(G)$ if and only if $c^4 = 0$ and it centralizes $\alpha(G)$ if and only if additionally $b^3 = 1$. Thus, the normalizer of $\alpha(G)$ is the group scheme of maps
$$
t \mapsto bt + c + dt^2 + et^4 \text{ with } c^4 = d^4 = e^2 = 0,
$$
which is isomorphic to $(\alpha_4 \rtimes A) \rtimes \GG_m$ and the centralizer is isomorphic to $(\alpha_4 \rtimes A) \rtimes \ZZ/3\ZZ$, where $A$ is a non-split extension of $\alpha_4$ by $\alpha_2 = \alpha(G)$. Thus, we have $C_{\Aut_C}(\alpha(G))/\alpha(G) \cong (\alpha_4 \rtimes \alpha_4) \rtimes \ZZ/3\ZZ$.

Finally, let us explain how to compute $M$ in the case $\lambda = 0$. As in the case $\lambda = 1$, we have $|M| \mid 3$. Choose an element $h_E \in \Aut_{E,O}$ of order $3$. Since $\alpha(G) = \alpha_2$ is the kernel of Frobenius on $E$, it is preserved by $h_E$. By Lemma \ref{translation} and Lemma \ref{char2}, conjugation by $h_E$ induces an automorphism of $\alpha_2$ of order $3$. On the other hand, the conjugation action of $N_{\Aut_C}(\alpha(G))$ on $\alpha_2$ factors through $N_{\Aut_C}(\alpha(G))/C_{\Aut_C}(\alpha(G))$. By the calculations of the previous paragraph and since $\Aut_{\alpha_2} \cong \GG_m$, we can find an automorphism $h_C \in N_{\Aut_C}(\alpha(G))$ of order $9$ such
that $\alpha \circ {\rm ad}_{h_E} = {\rm ad}_{h_C} \circ \alpha$. By Lemma \ref{lem: nonconnecteddiagonal} \eqref{item: nonconnecteddiagonal3}, $h = (h_E,h_C)$ normalizes the $G$-action on $E \times C$. By Proposition \ref{prop: normalizer}, $h$ descends to $X$ and induces a non-trivial element of $M$. Hence, we have $M \cong \ZZ/3\ZZ$. 

\vspace{2mm}

\item Let $g: t \mapsto \omega t$, where $\omega^2 + \omega = 1$. Then,
$$
h \circ g \circ h^{-1}: t \mapsto \omega t + \omega^2b^{-1}(c + b^{-4}c^4 e + \omega^2 b^{-2}c^2d + b^{-6}c^4 d^3) + b^{-1}dt^2 +  b^{-3}d^3t^4.
$$
Thus, $h$ normalizes $\ZZ/3\ZZ$ if and only if it centralizes $\ZZ/3\ZZ$ if and only if
\begin{eqnarray*}
d &=& 0, \text{ and} \\
c^4e + b^4 c &=& 0.
\end{eqnarray*}
Putting  this together with the conditions obtained in (f), we deduce that the normalizer of $\alpha(G)$ is the group scheme of maps
$$
t \mapsto bt + et^4 \text{ with } e^2 = 0,
$$
which is isomorphic to $\alpha_2 \rtimes \GG_m$. Moreover, we see that $C_{\Aut_C}(\alpha(G)) = \alpha(G)$. 

Since the automorphism $g$ generates the group $\alpha(G)(k)$, the calculation of the previous paragraph also shows that $N_{\Aut(C)}(\alpha(G)(k)) = \GG_m(k)$. Thus, $M$ is a subquotient of $\GG_m(k)$ by Theorem \ref{Mainwithdetails} \eqref{item: Main5} and, in particular, the order of $M$ is prime to $2$. By the same theorem, $M$ is also a subquotient of $\Aut_{E'}/((f_E)_* C_{\Aut_E}(G))$, which is isomorphic to $Q_8 \rtimes \ZZ/3\ZZ$ since $C_{\Aut_E}(G) \cong E$ in the current case. Hence, $M$ is a subquotient of $\ZZ/3\ZZ$. Using the same construction as in (f), one can show that $M = \ZZ/3\ZZ$.

\vspace{2mm}

\item We compute
$$
h \circ g \circ h^{-1}: t \mapsto t + ab^{-1}(1 + a(\lambda c^2 + c^4 + b^{-2}d)) + \lambda a^2 b t^2 + a^2(\lambda b^{-1}d^2 + b^3)t^4.
$$
This means that $h$ normalizes $\alpha(G)$ if and only if it satisfies
\begin{eqnarray}
b^3 &=& 1, \text{ and } \nonumber \\
\lambda d^2 &=& b + b^2. \label{eq: almostfinalequation}
\end{eqnarray}
In fact, since $d^4 = 0$, we can square \eqref{eq: almostfinalequation} to deduce $b = 1$, and since $\lambda \neq 0$, we get $d^2 = 0$. Now, $h$  centralizes $\alpha(G)$ if and only if additionally
\begin{eqnarray}
d &=& c^4 + \lambda c^2. \label{eq: finalequation}
\end{eqnarray}
Squaring \eqref{eq: finalequation}, we obtain $c^8 + \lambda^2 c^4 = 0$. Hence, the centralizer $C_{\Aut_C}(\alpha(G))$ of $\alpha(G)$ is the group scheme of maps
$$
t \mapsto t + c + (c^4 + \lambda c^2)t^2 + et^4 \text{ with } e^2 = 0 \text{ and } c^8 + \lambda^2 c^4 = 0,
$$
which is isomorphic to $(M_2 \times \alpha_2) \rtimes \ZZ/2\ZZ$, and the normalizer of $\alpha(G)$ is the group scheme of maps
$$
t \mapsto t + c + dt^2 + et^4 \text{ with } d^2 = e^2 = 0,
$$
which is isomorphic to $\GG_a \rtimes \alpha_2^2$. In particular, we have $C_{\Aut_C}(\alpha(G))/\alpha(G) \cong \alpha_2 \times \ZZ/2\ZZ$.

To calculate $M$, note first that $M$ is a subquotient of $\Aut_{E'}/((f_E)_*C_{\Aut_E}(G)) \cong A_4$ by Theorem \ref{Mainwithdetails} \eqref{item: Main5}. Since $E \to E'$ is purely inseparable, we can lift the action of $\Aut_X$ to $E \times C$, where it normalizes the $G$-action. By the previous paragraph, we have $N_{\Aut_C}(\alpha(G))/C_{\Aut_C}(\alpha(G)) \cong \GG_a$ and therefore $M$ is isomorphic to a subquotient of $(\ZZ/2\ZZ)^2$, again by Theorem \ref{Mainwithdetails} \eqref{item: Main5}. We may assume that $E$ is given by the equation $y^2 + \lambda y = x^3$.
Choose $c,d \in k$ such that $c^3 = \lambda$ and $d^2 + \lambda d + \lambda^2 = 0$ and let $h_{E,c,d}$ be the corresponding automorphism of $E$ as in Remark \ref{rem: M2}. Then, by the calculations of the previous paragraph and by Remark \ref{rem: M2}, $\alpha_T \circ {\rm ad}_{h_{E,c,d}} = {\rm ad}_{h_{C,c'}} \circ \alpha_T$, where $h_{C,c'}$ is a substitution 
$
h_{C,c'}: t \mapsto t + c'
$
with $c'^4 + \lambda c'^2 = c$. Therefore the automorphisms $(h_{E,c,d},h_{C,c'})$ of $E \times C$ descend to $X$. The three different values of $c$ yield three distinct non-trivial elements of $M$, so $M \cong (\ZZ/2\ZZ)^2$.

\end{enumerate}

\vspace{4mm}
This finishes the calculation of the groups $C_{\Aut_E}(G)/E, C_{\Aut_C}(\alpha(G))/\alpha(G),M,$ and thus also of the full automorphism group schemes for all bielliptic and quasi-bielliptic surfaces in all characteristics.
The results are summarized in Table \ref{bielliptic}, Table \ref{quasibielliptic3} and Table \ref{quasibielliptic2}.


\newcommand{\etalchar}[1]{$^{#1}$}

\end{document}